\newtheorem{Thm}{Theorem}[section]
\newtheorem{Def}[Thm]{Definition}
\newtheorem{Lem}[Thm]{Lemma}
\newtheorem{Cor}[Thm]{Corollary}
\newtheorem{Prop}[Thm]{Proposition}
\newtheorem{Eg}[Thm]{Example}
\newtheorem{Rem}[Thm]{Remark}
\newcommand{\Hom}{{\rm{Hom}}}
\newcommand{\Ext}{{\rm{Ext}}}
\newcommand{\End}{{\rm{End}}}
\newcommand{\modn}{{\rm{mod}}}
\newcommand{\Mod}{{\rm{Mod}}}
\renewcommand{\ker}{{\rm{Ker}}}
\newcommand{\soc}{{\rm{soc}}}
\renewcommand{\top}{{\rm{top}}}
\newcommand{\proj}{{\rm{proj }}}
\newcommand{\gldim}{{\rm{gldim}}}
\newcommand{\add}{{\rm{add}}}
\newcommand{\perf}{{\rm{perf}}}
\newcommand{\tri}{{\rm{tri}}}
\newcommand{\projdim}{{\rm{proj.dim}}}
\numberwithin{equation}{section}
\begin{document}
\title{Singularity Categories of Higher Nakayama Algebras}

\author{Wei Xing}
\address{Uppsala University, Uppsala 75106, Sweden}
\curraddr{}
\email{wei.xing@math.uu.se}
\thanks{}


\date{}
\dedicatory{}

\renewcommand{\thefootnote}{\alph{footnote}}
\setcounter{footnote}{-1} \footnote{Keywords:
Higher Nakayama Algebra; $n\mathbb{Z}$-Cluster Tilting Subcategory ; Singularity Category; Wide Subcategory.}


\begin{abstract}
  For a higher Nakayama algebra $A$
  in the sense of Jasso-K\"{u}lshammer, 
  we give an alternative approach to show that the singularity category of $A$ is triangulated equivalent to the stable module category of a self-injective higher Nakayama algebra, which was proved by McMahon by contraction along fabric idempotent ideals. This also generalizes a similar result for usual Nakayama algebras due to Chen-Ye and Shen. Our proof relies on the existence of $d\mathbb{Z}$-cluster tilting subcategories in the module category of $A$ and the result of Kvamme that each $d\mathbb{Z}$-cluster tilting subcategory of $A$ induces a $d\mathbb{Z}$-cluster tilting subcategory in its singularity category. Moreover, our result provides many concrete examples of the derived Auslander-Iyama correspondence introduced by Jasso-Muro, namely, 
  for each higher Nakayama algebra we calculate the endomorphism algebra of its distinguished $d\mathbb{Z}$-cluster tilting object inside its singularity category, which can be used to recognize such singularity categories among algebraic triangulated categories.
\end{abstract}

\maketitle

\tableofcontents

\section{Introduction}\label{sect1}

Auslander-Reiten theory is a fundamental tool to study representation theory from a homological point of view.
A generalization of this theory, called higher
Auslander-Reiten theory,
was introduced by Iyama \cite{Iya07a, Iya07b, Iya11}.
In this theory, 
the object of study is some category $\mathcal{A}$,
usually the module category of a finite dimensional algebra or its bounded derived category, 
equipped with a $d$-cluster tilting subcategory $\mathcal{M} \subset \mathcal{A}$,
possibly with some additional property.
Depending on different settings, $d$-cluster tilting subcategories give rise to higher notions in homological algebra.
For instance, if $\mathcal{A}$ is abelian and $\mathcal{M}$ is $d$-cluster tilting,
then $\mathcal{M}$ is a $d$-abelian category in the sense of Jasso \cite{Jas16}.
If $\mathcal{A}$ is triangulated and $\mathcal{M}$ is $d$-cluster tilting with the additional property that $\mathcal{M}$ is closed under the $d$-fold suspension functor then $\mathcal{M}$ is $(d+2)$-angulated in the sense of Geiss-Keller-Oppermann \cite{GKO13}.

Let $A$ be a finite dimensional algebra, $\modn A$ be the category of finitely generated right $A$-modules and $D^b(\modn A)$ be the bounded derived category of $\modn A$.
Assume $\mathcal{M}$ is a $d$-cluster tilting subcategory of $\modn A$. 
A natural question is whether we can construct a $d$-cluster tilting subcategory $\mathcal{U}$ of some triangulated category related to $A$ out of $\mathcal{M}$.
If $A$ has global dimension $d$,
then the subcategory
\begin{equation*}
    \mathcal{U} = \add \{M[di]\in D^b(\modn A) \mid M\in \mathcal{M} \text{ and } i\in \mathbb{Z}\}
\end{equation*}
is $d$-cluster tilting inside $D^b(\modn A)$, see \cite{Iya11}.
In general,
if we drop the assumption that $A$ has global dimension $d$,
there is no known cluster tilting subcategories inside $D^b(\modn A)$.
As shown in \cite{JK16}, 
the naive approach doesn't necessarily give a $d$-cluster tilting subcategory in $D^b(\modn A)$.
If $A$ is self-injective, its stable module category $\underline{\modn} A$ has a triangulated structure, see \cite{Ha88}.
Then $\mathcal{U} = \underline{\mathcal{M}}$ is $d$-cluster tilting. Indeed, all $d$-cluster tilting subcategories of $\underline{\modn }A$ arise in this way.

With the stronger assumption that $\mathcal{M}$ is $d\mathbb{Z}$-cluster tilting \cite{IJ17},
Kvamme \cite{Kv21} showed that,
every $d\mathbb{Z}$-cluster tilting subcategory of $\modn A$ gives rise to a $d\mathbb{Z}$-cluster tilting subcategory of the singularity category $D_{sg}(A)$.
Note that if $\gldim A = d$, then $D_{sg}(A) = 0$ and any
$d$-cluster tilting subcategory is trivially $d\mathbb{Z}$-cluster tilting.
If $A$ is self-injective, then
 $D_{sg}(A) = \underline{\modn} A$ and there is a bijective correspondence between $d\mathbb{Z}$-cluster tilting subcategories in $\modn A$ and $\underline{\modn }A$.

However, the existance of $d\mathbb{Z}$-cluster tilting imposes a strong restriction on $A$. Nakayama algebras provide some examples of such algebras.
Recently, Herschend-Kvamme-Vaso \cite{HKV22} gave a complete description of $d\mathbb{Z}$-cluster tilting subcategories of Nakayama algebras.
Another typical class of such algebras is higher Nakayama algebras constructed by Jasso-K\"{u}lshammer \cite{JK19}.
As a generalization of usual Nakayama algebras,
higher Nakayama algebras admit complex homological structures while being convenient to compute combinatorically.
For this reason, we restrict our objects of study to higher Nakayama algebras.

The singularity category of an algebra was introduced by Buchweitz \cite{Buc21}.
The name ``singularity category" is justified by the fact that 
an algebra $A$ has finite global dimension if and only if $D_{sg}(A)$ vanishes.
The
singularity category captures the stable homological properties of an algebra.
As stated in the Buchweitz-Happel theorem that if $A$ is Iwanaga-Gorenstein,
the singularity category $D_{sg}(A)$ is triangulated equivalent to the stable category of maximal Cohen-Macaulay modules over $A$.
However, 
for non Iwanaga-Gorenstein algebras,
the singularity category is more difficult to describe.

In \cite{CY14}, Chen-Ye established a singular equivalence between an algebra and its idempotent subalgebra by contraction along a localisable object. 
By iteratively applying the result, they showed that the singularity category of a Nakayama algebra is triangulated equivalent to the stable category of a self-injective one. 
Shen \cite{Sh15} studied the singularity category of a Nakayama algebra in more details and gave an alternative approach for the same result.
Moreover, an explicit construction of such a singular equivalence between a Nakayama algebra and its idempotent subalgebra is given, which relies on the resolution quiver of a Nakayama algebra introduced by Ringel \cite{Rin13}.

The same question arises for higher Nakayama algebras. McMahon \cite{McM20} generalized Chen-Ye's approach to higher Auslander-Reiten theory.
By contracting along fabric idempotent ideals iteratively, he showed that the singularity category of a $d$-Nakayama algebra is triangulated equivalent to the stable module category of a self-injective $d$-Nakayama algebra which is an idempotent subalgebra of the original one.  

Motivated by the above results,
we generalize Shen's approach for higher Nakayama algebras and give an explicit construction of a singular equivalence between a $d$-Nakayama algebra $A$ and a self-injective $d$-Nakayama algebra $B$ which is an idempotent subalgebra of $A$. 
Moreover, this singular equivalence restricts to an equivalence between $(d+2)$-angulated categories $\underline{\add N} \xrightarrow[]{\sim} \underline{\add M}$, where $M$ (resp. $N$) is the distinguished $d\mathbb{Z}$-cluster tilting module of $A$ (resp. $B$).  
Our main result is stated  as follows.

\begin{Thm}\label{Thm}\cite[Theorem 4.4]{McM20} (Theorem 3.18)
    Let $A$ be a $d$-Nakayama algebra.
    Then the singularity category $D_{sg}(A)$ is triangulated equivalent to the stable module category $\underline{\modn } B$ with $B$ a self-injective $d$-Nakayama algebra. 
\end{Thm}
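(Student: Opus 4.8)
The plan is to adapt Shen's strategy for ordinary Nakayama algebras to the higher setting by producing an explicit idempotent $e \in A$ whose associated idempotent subalgebra $B = eAe$ is a self-injective $d$-Nakayama algebra, and then showing that the projection functor attached to the idempotent ideal $AeA$ descends to a triangle equivalence of singularity categories. Since $B$ will be self-injective, one has $D_{sg}(B) = \underline{\modn} B$, so it suffices to establish $D_{sg}(A) \simeq D_{sg}(B)$ and to identify $B$ as a self-injective $d$-Nakayama algebra. The advantage of this route over the iterated contraction of \cite{McM20} is that it yields a single, explicit functor and makes the combinatorics of $B$ directly computable.

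First I would analyze the combinatorial structure of $A$ through a higher analogue of Ringel's resolution quiver. For a $d$-Nakayama algebra the indecomposable projectives and the (higher) syzygies of the simple modules are governed by explicit combinatorial data coming from the admissible configurations of Jasso--K\"{u}lshammer. As in the classical case, the vertices of the resolution quiver should split into those lying on cycles and those that do not, the cyclic vertices singling out the ``eventually periodic'' part of the module category. I would take $e$ to be the sum of the primitive idempotents indexed by the cyclic vertices and set $B = eAe$. The next task is to verify that $B$ is again a $d$-Nakayama algebra and, crucially, that it is self-injective: the self-injectivity should follow from the periodicity encoded in the cycles, so that applying the Schur functor to the relevant projective-injective $A$-modules produces $B$-modules that are simultaneously projective and injective, giving the correct socle/top matching.

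Next I would build the singular equivalence. The most transparent route is to check that $AeA$ is localisable in the sense of Chen--Ye, i.e.\ that $A/AeA$ has finite projective dimension and that the Schur functor $(-)e = -\otimes_A Ae$ sends totally acyclic complexes of projectives to totally acyclic complexes; this makes the induced functor $D_{sg}(A) \to D_{sg}(B)$ well defined, and one then proves it is fully faithful and dense. Alternatively, following Shen directly, one may define the functor on objects by taking sufficiently high higher syzygies, which land in $\add$ of the projective-injectives and are therefore controlled by $B$, the resolution quiver guaranteeing that iterated syzygies become periodic so that the assignment is well defined up to the suspension of $\underline{\modn} B$. Finally, invoking the existence of the distinguished $d\mathbb{Z}$-cluster tilting module $M$ of $A$ together with Kvamme's theorem that $\add M$ induces a $d\mathbb{Z}$-cluster tilting subcategory of $D_{sg}(A)$, the equivalence restricts to the induced cluster tilting subcategories and yields the refined equivalence of $(d+2)$-angulated categories $\underline{\add N} \xrightarrow{\sim} \underline{\add M}$.

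The main obstacle I expect is the bookkeeping of the higher resolution. In the classical case a single linear order governs the syzygies of the simples, whereas for $d$-Nakayama algebras a syzygy of a simple need no longer be simple, and the resolution quiver must be replaced by a more intricate combinatorial device tracking the $d\mathbb{Z}$-cluster tilting data. Proving that this device detects \emph{exactly} the self-injective idempotent subalgebra, and that iterated higher syzygies stabilise to give a triangle functor compatible with suspension, is the technical heart of the argument; establishing full faithfulness and density of the resulting functor, equivalently that $AeA$ is a perfect stratifying ideal, is where the real work lies.
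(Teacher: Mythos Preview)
Your overall strategy matches the paper closely: define a higher resolution quiver on the vertex set $\{1,\ldots,n\}$ via $\overline{f}(i)\equiv i-\ell_i-d+1\pmod n$, take $I$ to be the set of cyclic vertices, form the idempotent $e$ whose projectives are those $M(x)$ with \emph{all coordinates} $x_1,\ldots,x_{d+1}\in I$, and show $B=eAe$ is a self-injective $d$-Nakayama algebra. Two clarifications are worth making. First, the resolution quiver does \emph{not} need to be replaced by anything more intricate; it remains a quiver on $\{1,\ldots,n\}$, and the passage to tuples happens only when selecting the idempotent. Second, the paper does not realise the singular equivalence via Chen--Ye localisability nor via Shen's explicit syzygy functor, but through the machinery of wide subcategories of $d$-abelian categories in the sense of Herschend--J{\o}rgensen--Vaso: one shows that $\mathcal{W}=\add\{M(x)\mid x\in os_I^{d+1}\}$ is a wide subcategory of $\mathcal{M}_{\underline{\ell}}^{(d)}$ and applies a general theorem (Theorem~\ref{Thm_WideSubcat} in the paper) asserting that under suitable hypotheses the functor $i_\lambda=-\otimes_B P$ induces a fully faithful triangle functor $D_{sg}(B)\to D_{sg}(A)$, which is an equivalence once every indecomposable in $\mathcal{M}$ has some syzygy landing in $\mathcal{W}$. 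This last condition is exactly what the resolution-quiver combinatorics (Proposition~\ref{Prop_f}(iii) and Lemma~\ref{lem_syzygy_f}) provides. The wide-subcategory route has the advantage that the $(d+2)$-angulated refinement via Kvamme's theorem drops out immediately from the same framework, and it avoids checking directly that $A/AeA$ has finite projective dimension or that $AeA$ is stratifying; your proposed routes would also work but require separate verification of those homological conditions.
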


To get $B$,
we generalize the notion of resolution quiver to higher Nakayama algebras.
In precise terms,
if $A$ is a $d$-Nakayama algebra with Kupisch series $\underline{\ell} = (\ell_1, \ldots, \ell_n)$,
the vertex set of the resolution quiver $R(A)$ is $\{1, 2,\ldots, n\}$,
and there is an arrow from $i$ to $j$ if $j \equiv i - \ell_{i} -d + 1 \mod  n$.
When $d = 1$, our definition coincides with the resolution quivers for usual Nakayama algebras defined in \cite{Rin13}.
Let $J$ be the subset of $\{1, 2, \ldots, n\}$ which consists all the numbers that lie in a cycle of $R(A)$ and let $I = J + n\mathbb{Z}$.
It turns out that $B$ is the endomorphism algebra of the direct sum of indecomposable projective $A$-modules whose coordinates are in $I$.
Here by coordinates, we follow a slightly modified version of the notation given in \cite{JK19} where indecomposable $A$-modules are indexed by ordered sequences of length $d+1$ with certain restrictions given by $\underline{\ell}$.  

Additionally, the additive closure of modules in $\add M$ whose coordinates are in $I$ yields a $d$-wide subcategory $\mathcal{W}$ of $\add M$ in the sense of \cite{HJV20}.
By applying \cite[Theorem B]{HJV20}, we get an equivalence between $d$-abelian categories $\add N\xrightarrow{\sim} \mathcal{W}$.
Combined with Kvamme's result, the singular equivalence between $B$ and $A$ restricts to $\underline{\add N}\xrightarrow{\sim} \underline{\mathcal{W}} = \underline{\add M}$.
This allows us to calculate $D_{sg}(A)(M, M)$ via $\underline{\End}_B(N)$, which turns out to be a self-injective $(d+1)$-Nakayama algebra.
We thus show that this self-injective $(d+1)$-Nakayama algebra is twisted $(d+2)$-periodic.
Further, our result provides concrete examples of the derived Auslander-Iyama correspondence introduced by Jasso-Muro \cite{JKM22},
which states that there is a bijective correspondence between the equivalence classes of pairs $(\mathcal{T}, c)$ consisting of an algebraic Krull-Schmidt triangulated category $\mathcal{T}$ with a basic $d\mathbb{Z}$-cluster tilting object $c\in \mathcal{T}$ and the Morita equivalence classes of twisted $(d+2)$-periodic self-injective algebras by sending $c$ to $\mathcal{T}(c,c)$. 
In particular, this correspondence provides a method of recognizing such a pair $(\mathcal{T}, c)$ via $\mathcal{T}(c, c)$.
Several recognition theorems for algebraic triangulated categories were discussed in \cite[Section 6]{JKM22}.
In particular, \cite[Theorem 6.5.2]{JKM22} gave a recognition theorem of the stable module categories of self-injective higher Nakayama algebras.
Theorem \ref{Thm} extends this recognition theorem to the singularity categories of all higher Nakayama algebras.
Thus, we extend the library of algebraic triangulated categories that can be recognized via the derived Auslander-Iyama correspondence.

\textbf{Notation and conventions.}
Throughout this paper,
we fix positive integers $d$ and $n$.
We work over an arbitrary field $k$.
Unless stated otherwise,
all algebras are finite dimensional $k$-algebras and all modules are finite dimensional right modules.
We denote by $D$ the $k$-duality $\Hom_k(-, k)$.

All subcategories considered are supposed to be full.
Let $F: \mathcal{C}\rightarrow \mathcal{D}$ be a functor,
the essential image of $F$ is the full subcategory of $\mathcal{D}$ given by 
\begin{equation*}
    F\mathcal{C} = \{D\in \mathcal{D} \mid \exists  C\in \mathcal{C} \text{ such that } FC \cong D\}.
\end{equation*}
Let $\mathcal{C}$ be a triangulated category.
We denote by $\Sigma$ the suspension functor of $\mathcal{C}$.
By $\tri(E)$ we mean the smallest triangulated subcategory of $\mathcal{C}$ containing the set of objects $E$ in $\mathcal{C}$.

Let $A$ be a finite dimensional algebra over $k$ and $\modn A$ the category of finitely generated right $A$-modules.
We denote by $\underline{\modn}A$ the projectively stable module category of $A$,
that is the category with the same  objects as $\modn A$ and morphisms given by $\underline{\Hom}_A(M,N) = \Hom_A(M,N)/\mathcal{P}(M,N)$ where $\mathcal{P}(M, N)$ denotes the subspace of morphisms factoring through projective modules.
We denote by $\Omega: \underline{\modn}A\rightarrow \underline{\modn}A$ the syzygy functor defined by $\Omega(M)$ being the kernel of the projective cover $P(M)\twoheadrightarrow M$.
Let $\Omega^0(M) = M$ and $\Omega^{i+1}(M) = \Omega(\Omega^i(M))$ for $i\geq 0$.
The injectively stable module category $\overline{\modn}A$ of $A$ and the cosyzygy functor $\Omega^{-1}: \overline{\modn}A\rightarrow \overline{\modn}A$ are defined dually.
When $A$ is a self-injective algebra, $\modn A$ is Frobenius thus $\underline{\modn} A$ has a triangulated category structure with the suspension functor $\Omega^{-1}$.
We refer to \cite{Ha88} for more details.

We consider the $d$-Auslander-Reiten translations $\tau_d: \underline{\modn}A\rightarrow \overline{\modn}A$    and $\tau_d^{-}: \overline{\modn}A\rightarrow \underline{\modn}A$
defined by $\tau_d = \tau\Omega^{d-1}$ and $\tau_d^{-} = \tau^{-}\Omega^{-(d-1)}$
where $\tau$ and $\tau^{-}$ denote the usual Auslander-Reiten translations.

Recall that $D^b(\modn A)$ denotes the bounded derived category of $\modn A$.
The category $\modn A$ is a full subcategory of $D^b(\modn A)$ by identifying an $A$-module with the corresponding stalk complex concentrated at degree zero.
A complex in $D^b(\modn A)$ is perfect provided that it is quasi-isomorphic to a bounded complex of finitely generated projective $A$-modules.
Perfect complexes form a thick subcategory of $D^b(\modn A)$,
which is denoted by $\perf(A)$. 
The singularity category of $A$,
denoted by $D_{sg}(A)$ is the quotient of triangulated categories given as
\begin{equation*}
    D_{sg}(A) = D^b(\modn A)/\perf(A).
\end{equation*}
Recall that $K^{-,b}(\proj A)$ denotes the upper bounded homotopy category of $\proj A$ and $K^{b}(\proj A)$ denotes bounded homotopy category of $\proj A$, which is a thick triangulated subcategory of $K^{-,b}(\proj A)$.
Via the equivalences $K^{-,b}(\proj A) \cong D^b(\modn A)$ and $K^{b}(\proj A) \cong \perf(A)$, 
we have that 
\begin{equation*}
    D_{sg}(A)\cong K^{-,b}(\proj A) / K^{b}(\proj A).
\end{equation*}
Denote by $q': D^b(\modn A)\rightarrow D_{sg}(A)$ the quotient functor. 
Observe that the functor $\modn A\rightarrow D^b(\modn A) \xrightarrow[]{q'} D_{sg}(A)$ vanishes on projective modules.
Hence it induces a functor $q: \underline{\modn} A\rightarrow D_{sg}(A)$.

By a singular equivalence between two algebras $A$ and $B$, 
we mean a triangle equivalence between their singularity categories.

\section{Preliminaries}\label{sect2}

\subsection{$d$-cluster tilting subcategories}

Let $\mathcal{M}$ be a subcategory of a category $\mathcal{C}$ and let $C\in \mathcal{C}$.
A right $\mathcal{M}$-approximation of $C$ is a morphism $f: M\rightarrow C$ with $M\in \mathcal{M}$ such that all morphisms $g: M'\rightarrow C$ with $M'\in \mathcal{M}$ factor through $f$.
$\mathcal{M}$ is contravariantly finite in $\mathcal{C}$ if every $C\in \mathcal{C}$ admits a right $\mathcal{M}$-approximation.
The notions of left $\mathcal{M}$-approximation and covariantly finite are defined dually.
We say that $\mathcal{M}$ is functorially finite in $\mathcal{C}$ if $\mathcal{M}$ is both contravariantly finite and covariantly finite.
In particular, if $M\in \modn A$, 
then $\add M$ is functorially finite.
By a right (minimal) $\add M$-resolution of $X\in \modn A$ we mean the following complex
\begin{equation*}
    \xymatrix@R=1em@C=1em{\cdots\ar[r] & M_n\ar[r]^{f_n} & \cdots\ar[r] & M_1\ar[r]^{f_1} & M_0\ar[r]^{f_0} & X}
\end{equation*}
with $f_0$ a (minimal) right $\add M$-approximation of $X$ and $f_i$ a (minimal) right $\add M$-approximation of $\ker f_{i-1}$ for all $i\geq 1$.
A left (minimal) $\add M$-resolution is defined dually.
Recall in the case when $\mathcal{C}$ is abelian, $\mathcal{M}$ is called a generating (resp. cogenerating) subcategory if for any object $C\in\mathcal{C}$, there exists an epimorphism $M\rightarrow C$ (resp. monomorphism $C \rightarrow M$) with $M\in \mathcal{M}$.

\begin{Def}[\cite{Iya11, IY08, IJ17}]
    Let $d$ be a positive integer.
    Let $\mathcal{C}$ be an abelian or a triangulated category,
    and $A$ a finite-dimensional $k$-algebra.
    \begin{itemize}
        \item [(a)] We call a subcategory $\mathcal{M}$ of $\mathcal{C}$ a $d$-cluster tilting subcategory if it is functorially finite, generating-cogenerating if $\mathcal{C}$ is abelian and 
        \begin{align*}
            \mathcal{M} & = \{ C\in \mathcal{C} \mid \Ext_{\mathcal{C}}^i(C, \mathcal{M}) = 0 \text{ for } 1\leq i\leq d-1\}\\
            & = \{ C\in \mathcal{C} \mid \Ext_{\mathcal{C}}^i(\mathcal{M}, C) = 0 \text{ for } 1\leq i\leq d-1\}.
        \end{align*}
        If moreover $\Ext_{\mathcal{C}}^i(\mathcal{M}, \mathcal{M}) \neq 0$ implies that $i\in d\mathbb{Z}$,
        then we call $\mathcal{M}$ a $d\mathbb{Z}$-cluster tilting subcategory.
        \item [(b)] A finitely generated module $M\in \modn A$ is called a $d$-cluster tilting module (respectively $d\mathbb{Z}$-cluster tilting module) if $\add M$ is a $d$-cluster tilting subcategory (respectively $d\mathbb{Z}$-cluster tilting subcategory) of $\modn A$.
    \end{itemize}
\end{Def}

\begin{Rem}\label{dZ-CTInTriCat}
     If $\mathcal{C}$ is a triangulated category, then
    \begin{equation*}
        \Ext_{\mathcal{C}}^i(X, Y) = \Hom_{\mathcal{C}}(X,\Sigma^i Y) \text{ for } X, Y\in \mathcal{C}.
    \end{equation*}
    Therefore $\mathcal{M}$ is a $d\mathbb{Z}$-cluster tilting subcategory of $\mathcal{C}$ if and only if $\mathcal{M}$ is $d$-cluster tilting and $\Sigma^d \mathcal{M} \subset \mathcal{M}$.
    Note that in this case, $\mathcal{M}$ has a $(d+2)$-angulated structure in the sense of \cite{GKO13}. 
\end{Rem}

\begin{Def}\cite[Definition 2.5]{HJV20}
    We call $(A, \mathcal{M})$ a $d$-homological pair if $A$ is a finite dimensional $k$-algebra and $\mathcal{M} \subset \modn A$ is a $d$-cluster tilting subcategory.
\end{Def}

\begin{Prop}[\cite{Iya11}]\label{CTApprox}
    Let $(A, \mathcal{M})$ be a $d$-homological pair.
    Then each $X\in \modn A$ has a minimal right $\mathcal{M}$-resolution
    $\xymatrix@C=1em@R=1em{0\ar[r] & M_d\ar[r] & \cdots\ar[r] & M_1\ar[r] & X\ar[r] & 0}$ and a minimal left $\mathcal{M}$-resolution
    $\xymatrix@C=1em@R=1em{0\ar[r] & X\ar[r] & M_1'\ar[r] & \cdots\ar[r] & M_d'\ar[r] & 0}$,
    which are exact.
\end{Prop}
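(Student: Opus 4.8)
The plan is to build the right $\mathcal{M}$-resolution one approximation at a time and then show it must terminate after exactly $d$ steps, the termination being the only nontrivial point. Set $K_0 = X$. Since $\mathcal{M}$ is functorially finite and $\modn A$ is Krull-Schmidt, each $K_j$ admits a minimal right $\mathcal{M}$-approximation $f_{j+1}\colon M_{j+1}\to K_j$; because $\mathcal{M}$ is generating, $f_{j+1}$ is an epimorphism, so writing $K_{j+1} = \ker f_{j+1}$ we obtain short exact sequences
\begin{equation*}
    0 \to K_{j+1} \to M_{j+1} \to K_j \to 0, \qquad j \ge 0.
\end{equation*}
Splicing these produces a minimal right $\mathcal{M}$-resolution of $X$ which is automatically exact, so the whole claim reduces to showing $K_{d-1}\in\mathcal{M}$: for then the approximation of $K_{d-1}$ may be taken to be the identity and the resolution stops at $M_d = K_{d-1}$.

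To prove $K_{d-1}\in\mathcal{M}$ I would invoke the characterization $\mathcal{M} = \{C \mid \Ext^i_A(\mathcal{M}, C) = 0 \text{ for } 1\le i\le d-1\}$, so that it suffices to verify $\Ext^i_A(N, K_{d-1}) = 0$ for every $N\in\mathcal{M}$ and $1\le i\le d-1$. Applying $\Hom_A(N,-)$ to the short exact sequence above yields a long exact sequence; the approximation property of $f_{j+1}$ makes $\Hom_A(N, M_{j+1}) \to \Hom_A(N, K_j)$ surjective, while $\Ext^i_A(N, M_{j+1}) = 0$ for $1\le i\le d-1$ since $N, M_{j+1}\in\mathcal{M}$. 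Reading off the long exact sequence then gives two facts: first, $\Ext^1_A(N, K_{j+1}) = 0$ for all $j\ge 0$ (the connecting map out of the surjection is zero, and the target $\Ext^1_A(N,M_{j+1})$ vanishes); and second, the dimension-shift isomorphism $\Ext^i_A(N, K_{j+1}) \cong \Ext^{i-1}_A(N, K_j)$ valid for $2\le i\le d-1$, where both flanking $\Ext$-groups into $M_{j+1}$ vanish.

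Iterating the shift, any $\Ext^i_A(N, K_{d-1})$ with $2\le i\le d-1$ is carried after $i-1$ steps to $\Ext^1_A(N, K_{d-i})$ with $d-i\ge 1$, which vanishes by the first fact; and the case $i=1$ vanishes directly. Hence $\Ext^i_A(N, K_{d-1}) = 0$ throughout $1\le i\le d-1$, so $K_{d-1}\in\mathcal{M}$ and the right resolution has the asserted length $d$. The left $\mathcal{M}$-resolution follows by the dual argument, taking minimal left $\mathcal{M}$-approximations (covariant finiteness), using that $\mathcal{M}$ is cogenerating to ensure monomorphisms, and invoking the other characterization $\mathcal{M} = \{C \mid \Ext^i_A(C, \mathcal{M}) = 0,\ 1\le i\le d-1\}$ with the dual dimension shift on $\Ext^i_A(-, N)$. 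I expect the termination step — pinning down that the $(d-1)$-st syzygy lands in $\mathcal{M}$ rather than continuing indefinitely — to be the main obstacle, as this is precisely where the defining $\Ext$-orthogonality of a $d$-cluster tilting subcategory is consumed; existence and minimality of the approximations are routine consequences of functorial finiteness and the Krull-Schmidt property of $\modn A$.
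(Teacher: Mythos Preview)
The paper does not include a proof of this proposition; it is stated as a citation from \cite{Iya11} and used as a black box. Your argument is correct and is essentially the standard proof one finds in Iyama's work: build the resolution from successive minimal right $\mathcal{M}$-approximations, then use the long exact sequence together with the $\Ext$-orthogonality defining $\mathcal{M}$ to force the $(d-1)$-st kernel into $\mathcal{M}$. The dimension-shift bookkeeping is accurate, and your identification of the termination step as the only substantive point is exactly right.
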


The following result relates $d\mathbb{Z}$-cluster tilting subcategories of $\modn A$ and $D_{sg}(A)$.

\begin{Thm}[\cite{Kv21}]\label{Thm_Kv}
    Let $A$ be a finite dimensional algebra and $\mathcal{M}$ a $d\mathbb{Z}$-cluster tilting subcategory of $\modn A$.
    Then the subcategory
    \begin{equation*}
        \underline{\mathcal{M}} = \{X\in D_{sg}(A) \mid X\cong M[di] \text{ for some } M\in \mathcal{M} \text{ and } i\in \mathbb{Z}\}
    \end{equation*}
    is an $d\mathbb{Z}$-cluster tilting subcategory of $D_{sg}(A)$. 
    In particular, $\underline{\mathcal{M}}$ is a $(d+2)$-angulated category.
\end{Thm}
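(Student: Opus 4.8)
The plan is to transport every computation in $D_{sg}(A)$ down to the stable module category and then feed in the module-level axioms for $\mathcal{M}$. By Remark \ref{dZ-CTInTriCat} it suffices to verify that $\underline{\mathcal M}$ is functorially finite, that it is $d$-cluster tilting in the triangulated sense, and that $\Sigma^d\underline{\mathcal M}\subseteq\underline{\mathcal M}$; the last of these is immediate, since $\Sigma^d(M[di])=M[d(i+1)]$, and likewise $\underline{\mathcal M}$ is closed under $\Sigma^{-d}$ because $i$ ranges over all of $\mathbb Z$. The basic device is the standard description of singularity-category morphisms as a stabilisation along the syzygy functor: every object of $D_{sg}(A)$ is isomorphic to $\Sigma^{-a}Y$ for some $A$-module $Y$ and $a\ge0$, one has $X\cong\Sigma\Omega X$, hence $\Omega^jX\cong\Sigma^{-j}X$, and
\begin{equation*}
\Hom_{D_{sg}(A)}(X,\Sigma^sY)\;\cong\;\widehat{\Ext}^s_A(X,Y)\;=\;\varinjlim_{j}\,\Ext^{s+j}_A(\Omega^jX,Y),
\end{equation*}
the complete (Tate--Vogel) cohomology. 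This reduces all the required Hom- and approximation-computations in $D_{sg}(A)$ to statements about $\Ext$-groups and $\add\mathcal M$-resolutions in $\modn A$.

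\textbf{A structural lemma.} The hypothesis that $\mathcal M$ is $d\mathbb Z$-cluster tilting (and not merely $d$-cluster tilting) enters through closure under the $d$-th syzygy and cosyzygy, and I would first establish $\Omega^d\underline{\mathcal M}\subseteq\underline{\mathcal M}$ and dually $\Omega^{-d}\overline{\mathcal M}\subseteq\overline{\mathcal M}$. For $M\in\mathcal M$, repeated dimension shifting along $0\to\Omega^{k+1}M\to P_k\to\Omega^kM\to0$ gives $\Ext^i_A(\Omega^dM,\mathcal M)\cong\Ext^{i+d}_A(M,\mathcal M)$, which vanishes for $1\le i\le d-1$ since $d+1,\dots,2d-1\notin d\mathbb Z$; the opposite orthogonality $\Ext^i_A(\mathcal M,\Omega^dM)=0$ for $1\le i\le d-1$ I would deduce from higher Auslander--Reiten duality for $\mathcal M$, after which the maximality characterisation forces $\Omega^dM\in\add\mathcal M$ up to projective summands. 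Granting this, rigidity follows: for $M,M'\in\mathcal M$ and $s\notin d\mathbb Z$ I would run the colimit above over $j\in d\mathbb Z$, where syzygy closure gives $\Omega^jM\in\add\mathcal M$, so that every term is $\Ext^{s+j}_A(\add\mathcal M,\mathcal M)$ in a degree $\equiv s\not\equiv0\pmod d$ and hence vanishes. Closure under $\Sigma^{\pm d}$ then propagates this to $\Hom_{D_{sg}(A)}(\underline{\mathcal M},\Sigma^s\underline{\mathcal M})=0$ for every $s\notin d\mathbb Z$.

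\textbf{Functorial finiteness and maximality.} For functorial finiteness I would reduce, via $\Sigma^{\pm d}$-closure, to approximating a module $Y$, take its minimal right $\add\mathcal M$-resolution $0\to M_d\to\cdots\to M_1\to Y\to0$ from Proposition \ref{CTApprox}, split it into short exact sequences and map it into $D_{sg}(A)$, where these become a chain of triangles with all terms in $\underline{\mathcal M}$; splicing them and using the rigidity just proved shows that the image of $M_1\to Y$ is a right $\underline{\mathcal M}$-approximation, and dually on the left. For the two maximality equalities, suppose $X\in D_{sg}(A)$ satisfies $\Hom_{D_{sg}(A)}(\underline{\mathcal M},\Sigma^iX)=0$ for $1\le i\le d-1$; representing $X\cong\Sigma^{-a}Y$ by a module and using the formula above together with syzygy closure, I would translate this vanishing into ordinary $\Ext$-orthogonality $\Ext^i_A(\mathcal M,Z)=0$ for $1\le i\le d-1$, where $Z$ is a suitable $d$-fold (co)syzygy representative of $X$ lying in the stable range, and symmetrically $\Ext^i_A(Z,\mathcal M)=0$. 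The maximality of $\mathcal M$ inside $\modn A$ then yields $Z\in\mathcal M$, whence $X\in\underline{\mathcal M}$; the other defining equality is obtained in the same way from the left-hand orthogonality.

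\textbf{Main obstacle.} The delicate point throughout is making the syzygy colimit rigorous and, above all, faithfully translating the orthogonality conditions in $D_{sg}(A)$ back into ordinary $\Ext$-conditions in $\modn A$: the natural comparison maps $\Ext^{s+j}_A(\Omega^jX,Y)\to\Ext^{s+j+1}_A(\Omega^{j+1}X,Y)$ differ from isomorphisms by contributions of the form $\Ext^{\ast}_A(-,\proj A)$, arising because projectives need not be injective, and one must show these die in degrees $\not\equiv0\pmod d$ so that the colimit collapses onto the module-level $\Ext$-groups and that vanishing of complete cohomology can be upgraded to vanishing of ordinary $\Ext$ on a stable-range representative. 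It is precisely here that the full $d\mathbb Z$-hypothesis and the length-$d$ $\add\mathcal M$-resolutions of Proposition \ref{CTApprox} are indispensable rather than the weaker $d$-cluster tilting condition: the syzygy/cosyzygy closure of the structural lemma is the mechanism confining all of this projective noise to degrees divisible by $d$.
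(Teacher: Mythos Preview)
The paper does not prove this statement at all: Theorem~\ref{Thm_Kv} is quoted verbatim from \cite{Kv21} and used as a black box, with no argument supplied. There is therefore no ``paper's own proof'' for your attempt to be compared against.

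That said, a brief comment on your sketch on its own terms. Your overall architecture---stabilisation of $\underline{\modn}A$ along $\Omega$, syzygy-closure $\Omega^d\mathcal M\subseteq\mathcal M$, rigidity via the colimit description of $\Hom_{D_{sg}}$, and transport of the length-$d$ $\mathcal M$-resolutions from Proposition~\ref{CTApprox} into triangles---is indeed the shape of Kvamme's argument in \cite{Kv21}. You have also correctly located the genuine difficulty: upgrading vanishing of complete cohomology to vanishing of ordinary $\Ext$ on a chosen module representative, so that the module-level maximality of $\mathcal M$ can be invoked. Where your sketch is thinnest is exactly there. In the maximality step you assert that the hypothesis $\Hom_{D_{sg}(A)}(\underline{\mathcal M},\Sigma^iX)=0$ for $1\le i\le d-1$ can be translated into $\Ext^i_A(\mathcal M,Z)=0$ and $\Ext^i_A(Z,\mathcal M)=0$ for a suitable syzygy representative $Z$, but you do not explain why both orthogonalities hold simultaneously for the \emph{same} $Z$, nor why the ``projective noise'' in the comparison maps actually vanishes rather than merely lying in degrees divisible by $d$; Kvamme handles this by a careful induction and by working with Gorenstein-projective approximations, not just by degree-counting. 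Your appeal to ``higher Auslander--Reiten duality'' for the left orthogonality of $\Omega^dM$ is also a placeholder rather than an argument. So the strategy is sound but the execution at the crux would need substantially more detail to constitute a proof.
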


\subsection{$d$-abelian categories and wide subcategories}

The notion of $d$-abelian categories was introduced by Jasso \cite{Jas16}. 
It relies on the notion of $d$-kernel, $d$-cokernel and $d$-extension,
which we now recall following \cite{HJ21}.

\begin{Def}\cite[Definition 2.1]{HJ21}
    Let $\mathcal{M}$ be an additive category and 
    \begin{equation*}
        \mathbb{E}: \xymatrix@C=1em@R=1em{M_{d+1}\ar[r]^f & M_d\ar[r] & \cdots\ar[r] & M_1\ar[r]^g & M_0}
    \end{equation*}
    a sequence in $\mathcal{M}$.
    \begin{itemize}
        \item [(1)] We call 
        \begin{equation*}
            \xymatrix@C=1em@R=1em{M_{d+1}\ar[r]^f & M_d\ar[r] & \cdots\ar[r] & M_1}
        \end{equation*}
        a $d$-kernel of $g$ if
        \begin{equation*}
             \xymatrix@C=1em@R=1em{0\ar[r] & \mathcal{M}(M, M_{d+1})\ar[r]^{f\circ -} & \mathcal{M}(M, M_d)\ar[r] & \cdots\ar[r] & \mathcal{M}(M, M_0)}
        \end{equation*}
        is exact for all $M\in \mathcal{M}$.
        \item [(2)] We call 
        \begin{equation*}
            \xymatrix@C=1em@R=1em{ M_d\ar[r] & \cdots\ar[r] & M_1\ar[r]^g & M_0}
        \end{equation*}
        a $d$-cokernel of $f$ if
        \begin{equation*}
            \xymatrix@C=1em@R=1em{0\ar[r] & \mathcal{M}(M_0, M)\ar[r]^{-\circ g} & \mathcal{M}(M_1, M)\ar[r] & \cdots\ar[r] & \mathcal{M}(M_{d+1}, M)}
        \end{equation*}
         is exact for all $M\in \mathcal{M}$.
         \item [(3)] If both (1) and (2) are satisfied we call $\mathbb{E}$ a $d$-exact sequence (or a $d$-extension of $M_0$ by $M_{d+1}$).
         \item [(4)] We say that $\mathcal{M}$ is $d$-abelian if it is idempotent split, every morphism admits a $d$-kernel and a $d$-cokernel,
         and every monomorphism $f$ respectively epimorphism $g$ fits into a $d$-exact sequence of the form $\mathbb{E}$.
    \end{itemize}
\end{Def}

As shown in \cite{Jas16}, $d$-cluster tilting subcategories of abelian categories are $d$-abelian. 
Now we recall the notion of wide subcategories of $d$-abelian categories.

\begin{Def}[\cite{HJV20}]
    An additive subcategory $\mathcal{W}$ of a $d$-abelian category $\mathcal{M}$ is called wide if it satisfies the following conditions:
    \begin{itemize}
        \item [(i)] Each morphism in $\mathcal{W}$ has a $d$-kernel in $\mathcal{M}$ which consists of objects from $\mathcal{W}$.
        \item [(ii)] Each morphism in $\mathcal{W}$ has a $d$-cokernel in $\mathcal{M}$ which consists of objects from $\mathcal{W}$.
        \item [(iii)] Each $d$-exact sequence in $\mathcal{M}$,
        \begin{equation*}
            \xymatrix@R=1em@C=1em{0\ar[r] & W'\ar[r] & M_d\ar[r] & \cdots\ar[r] & M_1\ar[r] & W''\ar[r] & 0},
        \end{equation*}
        with $W', W''\in \mathcal{W}$, is Yoneda equivalent to a $d$-exact sequence in $\mathcal{M}$,
        \begin{equation*}
         \xymatrix@R=1em@C=1em{0\ar[r] & W'\ar[r] & W_d\ar[r] & \cdots\ar[r] & W_1\ar[r] & W''\ar[r] &  0},
        \end{equation*}
        with $W_i\in \mathcal{W}$ for each $i$.
    \end{itemize}
\end{Def}

In the following theorem, 
statement $(a)$ provides a construction of wide subcategories inside $d$-cluster tilting subcategories which was obtained in \cite{HJV20}.
Here we have relaxed the condition $(iii)$.
The same proof and same statement in \cite[Theorem B]{HJV20}  still apply.
Moreover, based on the same setting, 
we obtain statement $(b)$,
which provides a singular equivalence between two $k$-algebras under suitable conditions.
If in addition, we require $\mathcal{M}$ in the $d$-homological pair $(A, \mathcal{M})$ to be $d\mathbb{Z}$-cluster tilting,
then by Theorem \ref{Thm_Kv},
we obtain statement $(c)$,
which gives an equivalence between two $(d+2)$-angulated categories in the sense of \cite{GKO13}.

\begin{Thm}\label{Thm_WideSubcat}
    Let $(A, \mathcal{M})$ be a $d$-homological pair. Let $\mathcal{W} \subset \mathcal{M}$ be an additive subcategory. Let $P\in \mathcal{W}$ be a module and set $B = \End_A(P)$, so that $P$ becomes a $B$-$A$-bimodule. Assume the following:
    \begin{itemize}
        \item [(i)] As an $A$-module $P$ has finite projective dimension.
        \item [(ii)] $\Ext_{A}^{ i}(P,P) = 0$ for all $i\geq 1$.
        \item [(iii)] Each $W\in \mathcal{W}$ admits an exact $\add P$-resolution
        \begin{equation*}
            \xymatrix@C=1em@R=1em{
                \cdots\ar[r] & P_m\ar[r] & \cdots\ar[r] & P_1\ar[r] & P_0\ar[r] & W\ar[r] & 0,
                }  P_i\in \add P.
        \end{equation*}
        \item [(iv)] $(B, \mathcal{N})$ is a $d$-homological pair where
        $i_{\rho} = \Hom_A(P, -): \modn A\rightarrow \modn B$ and $\mathcal{N} = i_{\rho}(\mathcal{W})$. 
    \end{itemize}
    Then the following statements hold
    \begin{itemize}
        \item [(a)] 
        $\mathcal{W}$ is a wide subcategory of $\mathcal{M}$ and there is an equivalence of categories 
    \begin{equation*}
        i_{\lambda} = -\otimes_{B} P: \mathcal{N} \rightarrow \mathcal{W}. 
    \end{equation*}
        \item [(b)] 
        $i_{\lambda}: \modn B\rightarrow \modn A$ induces a
        fully faithful triangle functor between the singularity categories of $A$ and $B$,
    \begin{equation*}
        D_{sg}(i_{\lambda}): D_{sg}(B) \rightarrow D_{sg}(A).
    \end{equation*}
    Moreover, $D_{sg}(i_{\lambda})$ is a triangle equivalence if 
    for any indecomposable $M\in \mathcal{M}$,
    there exists an integer $n\in \mathbb{N}$ such that $\Omega^n(M)\in \mathcal{W}$.
        \item [(c)] If in addition $\mathcal{M}$ is a $d\mathbb{Z}$-cluster tilting subcategory of $\modn A$,
        then 
        $\underline{\mathcal{M}} \subset D_{sg}(A)$, $\underline{\mathcal{N}}\subset D_{sg}(B)$ are $d\mathbb{Z}$-cluster tilting and hence $(d+2)$-angulated.
        Moreover,
        $D_{sg}(i_{\lambda})$ restricts to an equivalence between $(d+2)$-angulated categories
        \begin{equation*}
            D_{sg}(i_{\lambda}): \underline{\mathcal{N}} \rightarrow \underline{\mathcal{M}}.
        \end{equation*}
    \end{itemize}
\end{Thm}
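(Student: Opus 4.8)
The plan is to exploit throughout the adjoint pair $i_\lambda = -\otimes_B P \dashv i_\rho = \Hom_A(P,-)$. For (a) I would first record that the projectivization functor $i_\rho$ restricts to an equivalence $\add P \xrightarrow{\sim} \proj B$ sending $P\mapsto B$, so that $i_\lambda$ restricted to $\proj B$ is its quasi-inverse onto $\add P$. Given the exact $\add P$-resolution of $W\in\mathcal W$ from (iii) and $\Ext^{\geq 1}_A(P,\add P)=0$ from (ii), applying $i_\rho$ yields a projective $B$-resolution of $i_\rho W$; tensoring back with $P$ and comparing through the counit (invertible on $\add P$) shows $i_\lambda i_\rho W\cong W$, and dually $i_\rho i_\lambda N\cong N$ for $N\in\mathcal N=i_\rho\mathcal W$, so $i_\lambda:\mathcal N\to\mathcal W$ and $i_\rho:\mathcal W\to\mathcal N$ are quasi-inverse equivalences. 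The assertion that $\mathcal W$ is wide is exactly \cite[Theorem B]{HJV20} with the relaxed (iii), whose proof transfers verbatim.

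For (b) I would first produce the functor on singularity categories. Computing on $K^{-,b}(\proj B)\simeq D^b(\modn B)$, the derived functor $Li_\lambda$ is the termwise application of $i_\lambda$, with image in complexes having terms in $\add P$; since $P\in\perf A$ by (i), bounded such complexes are perfect, so $Li_\lambda(\perf B)\subseteq\perf A$ and $Li_\lambda$ descends to $D_{sg}(i_\lambda):D_{sg}(B)\to D_{sg}(A)$. To prove full faithfulness I would first show $Li_\lambda:D^b(\modn B)\to D^b(\modn A)$ is fully faithful by checking the derived unit $\id\to Ri_\rho Li_\lambda$ is invertible. A dimension shift along the $\add P$-resolution gives $\Ext^{\geq 1}_A(P,\mathcal W)=0$, and its $i_\rho$-image gives $\operatorname{Tor}^B_{\geq 1}(\mathcal N,P)=0$, whence for $N\in\mathcal N$ one gets $Ri_\rho Li_\lambda N=R\Hom_A(P,W)=\Hom_A(P,W)=N$; since $\mathcal N$ is $d$-cluster tilting, Proposition \ref{CTApprox} yields $\modn B\subseteq\tri(\mathcal N)$, so the thick subcategory on which the unit is invertible exhausts $D^b(\modn B)$.

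The remaining step of (b) is to descend full faithfulness to the Verdier quotient. Since $Ri_\rho$ is right adjoint to $Li_\lambda$ with invertible unit, it suffices that $Ri_\rho(\perf A)\subseteq\perf B$; then $Ri_\rho$ descends to a right adjoint of $D_{sg}(i_\lambda)$ with invertible unit, forcing $D_{sg}(i_\lambda)$ to be fully faithful. This preservation of perfect complexes, concretely $R\Hom_A(P,A)\in\perf B$, is where the finiteness condition (i) is essential, and I expect it to be the main obstacle of the whole argument; I would prove it by replacing $P$ with a finite projective resolution and analysing the resulting bounded complex as a $B$-module. For the equivalence statement, essential surjectivity follows from generation: Proposition \ref{CTApprox} gives $D_{sg}(A)=\tri(\mathcal M)$, while the hypothesis that each indecomposable $M\in\mathcal M$ has $\Omega^n M\in\mathcal W$ for some $n$ yields $M\cong\Sigma^n\Omega^n M$ in $D_{sg}(A)$, placing every object of $\mathcal M$ in the thick essential image.

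Finally, (c) combines (b) with Theorem \ref{Thm_Kv}. When $\mathcal M$ is $d\mathbb Z$-cluster tilting, $\underline{\mathcal M}\subset D_{sg}(A)$ is $d\mathbb Z$-cluster tilting and $(d+2)$-angulated. I would transport this to $B$ using the identification $\Ext^i_B(N,N')\cong\Ext^i_A(W,W')$ obtained by computing the left-hand side through the $i_\rho$-image of the $\add P$-resolution (valid since $\Ext^{\geq 1}_A(P,\mathcal W)=0$), so that $\mathcal N$ inherits the $d\mathbb Z$-vanishing pattern and $\underline{\mathcal N}\subset D_{sg}(B)$ is $d\mathbb Z$-cluster tilting. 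Once $D_{sg}(i_\lambda)$ is a triangle equivalence it carries $\underline{\mathcal N}$ onto $\underline{\mathcal W}=\{W[di]\mid W\in\mathcal W\}$ compatibly with $\Sigma^d$; since $\underline{\mathcal W}\subseteq\underline{\mathcal M}$ and both are $d\mathbb Z$-cluster tilting, maximality forces $\underline{\mathcal W}=\underline{\mathcal M}$, giving the desired equivalence of $(d+2)$-angulated categories $\underline{\mathcal N}\xrightarrow{\sim}\underline{\mathcal M}$.
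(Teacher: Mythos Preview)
Your treatment of (a), the construction of $D_{sg}(i_\lambda)$, the derived full faithfulness of $Li_\lambda$, and the essential surjectivity argument in (b) are all essentially the same as the paper's (the paper cites \cite[Lemma~3.6]{HJV20} for full faithfulness of $i_\lambda^*$ and uses the same generation argument $D^b(\modn A)=\tri(\mathcal M)$). Part (c) is also fine, though you transfer the $d\mathbb Z$-property from $\mathcal N$ to $\mathcal M$ whereas the paper goes the other way; both work.

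The genuine gap is your descent of full faithfulness from $D^b$ to $D_{sg}$. You propose to show $Ri_\rho(\perf A)\subseteq\perf B$, i.e.\ $R\Hom_A(P,A)\in\perf B$, so that $Ri_\rho$ induces a right adjoint on singularity categories. But your suggested method---replacing $P$ by a finite $A$-projective resolution $Q_\bullet$---does not produce a complex of $B$-modules: the $B$-action on $P$ does not lift to the individual $Q_i$, so $\Hom_A(Q_\bullet,A)$ has no termwise $B$-structure to analyse. Computing via an injective resolution of $A$ gives terms $\Hom_A(P,I_j)$ which \emph{are} $B$-modules but have no reason to be $B$-projective. In fact the inclusion $Ri_\rho(\perf A)\subseteq\perf B$ is not a consequence of (i)--(iv): already when $P=eA$ is projective, it amounts to $Ae\in\perf(eAe)$, and for self-injective $B=eAe$ this would force $(1-e)Ae$ to be $B$-projective, which fails in easy examples.

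The paper avoids this entirely. Instead of producing a right adjoint at the singular level, it proves the equality
\[
i_\lambda^*(\perf B)\;=\;i_\lambda^*\bigl(D^b(\modn B)\bigr)\cap\perf A
\]
by a direct projective-dimension argument (if $i_\lambda^*(X_\bullet)\in\perf A$ but $X_\bullet\notin\perf B$, truncate, pull out a module $M$ with $\projdim_B M=\infty$, and use \cite[Lemma~3.6]{HJV20} to get $\Ext_A^i(i_\lambda M,i_\lambda\Omega^i M)\neq 0$ for all $i$, contradicting $i_\lambda M\in\perf A$). With this equality in hand, full faithfulness of the induced functor on quotients follows from the factorization criterion \cite[Proposition~10.2.6]{KS06}: every morphism from a perfect $A$-complex to an object of $i_\lambda^*(D^b(\modn B))$ factors through a bounded complex in $\add P$, which lies in the intersection. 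This is the step you are missing.
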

\begin{proof}
    We refer to \cite[Section 3]{HJV20} for the proof of $(a)$.
    We remind the reader that condition $(iii)$ in \cite[Theorem B]{HJV20} requires such a resolution to be finite,
    while we allow it here to be infinite.
    The same proof applies.
    
    Now we prove $(b)$.
    As in the proof of \cite[Theorem B]{HJV20}, 
    we may apply \cite[Lemma 3.3]{HJV20} to get that $_BP$ is projective.
    Hence $i_{\lambda}$ is an exact functor.
    Therefore $i_{\lambda}$ extends to a triangle functor
    \begin{equation*}
         i_{\lambda}^{\ast}: D^b(\modn B) \rightarrow D^b(\modn A).
    \end{equation*}
    By \cite[Lemma 3.6]{HJV20}, $i_{\lambda}^{\ast}$ is fully faithful.
    
    Note that $i_{\rho}$ and $i_{\lambda}$ restrict to quasi-inverse equivalences between $\add P$ and $\add B$.
    Hence $i_{\lambda}^{\ast}$ preserves perfect complexes since $P\in \perf(A)$ by (i).
    Therefore $i_{\lambda}^{\ast}$ induces a triangle functor between singularity categories
    \begin{equation*}
        D_{sg}(i_{\lambda}): D_{sg}(B) \rightarrow D_{sg}(A).
    \end{equation*}
    
    Next we show that $D_{sg}(i_{\lambda})$ is fully faithful.
    Firstly we claim that $$i_{\lambda}^{\ast}(\perf(B)) = i_{\lambda}^{\ast}(D^b(\modn B))\cap \perf(A).$$
    Again $P\in\perf(A)$ shows 
    $i_{\lambda}^{\ast}(\perf(B)) \subset i_{\lambda}^{\ast}(D^b(\modn B))\cap \perf(A)$.
    To see the other inclusion, 
    we take $X_{\bullet}\in D^b(\modn B)$ such that $i_{\lambda}^{\ast}(X_{\bullet})\in \perf(A)$.
    Choose $Q_{\bullet}\in K^{-,b}(\proj B)$ such that $Q_{\bullet}\cong X_{\bullet}$ in $D^b(\modn B)$.
    Let $n$ be the largest integer such that $H_n(Q_{\bullet}) \neq 0$.
    Denote by $\sigma_{\geq n}Q_{\bullet}$ the brutal truncation of $Q_{\bullet}$ at degree $\geq n$.
    Then $\sigma_{\geq n}Q_{\bullet}\cong \Sigma^n(M)$ in $D^b(\modn B)$ for some $M\in \modn B$.
    We have the following triangle
    \begin{equation*}
        \sigma_{< n}Q_{\bullet}\rightarrow Q_{\bullet} \rightarrow \sigma_{\geq  n}Q_{\bullet}\rightarrow\Sigma\sigma_{< n}Q_{\bullet}.
    \end{equation*}
    Applying $i_{\lambda}^{\ast}$ to it yields a triangle in $D^b(\modn A)$
    \begin{equation*}
        i_{\lambda}^{\ast}(\sigma_{< n}Q_{\bullet})\rightarrow i_{\lambda}^{\ast}(Q_{\bullet}) \rightarrow i_{\lambda}^{\ast}(\sigma_{\geq  n}Q_{\bullet})\rightarrow i_{\lambda}^{\ast}(\Sigma\sigma_{< n}Q_{\bullet}).
    \end{equation*}
    Note that $\sigma_{< n}Q_{\bullet}\in \perf(B)$, 
    so $i_{\lambda}^{\ast}(\sigma_{< n}Q_{\bullet})\in \perf(A)$.
    By assumption $i_{\lambda}^{\ast}(Q_{\bullet}) \cong i_{\lambda}^{\ast}(X_{\bullet})\in \perf(A)$.
    Hence $i_{\lambda}^{\ast}(\sigma_{\geq n}Q_{\bullet})\in \perf(A)$ which yields $\Sigma^{-n} i_{\lambda}^{\ast}(\sigma_{\geq n}Q_{\bullet})\cong i_{\lambda}^{\ast}(M)\in \perf(A)$.
    If $M$ were not in $\perf(B)$, then $\projdim M_B = \infty$.
    That is, for any positive integer $i$, $\Ext_B^i(M, \Omega^iM) \neq 0$.
    Then $\Ext_A^i(i_{\lambda}(M), i_{\lambda}(\Omega^iM)) \cong \Ext_B^i(M, \Omega^iM) \neq 0$ by \cite[Lemma 3.6]{HJV20}.
    But this contradicts with the fact that $i_{\lambda}^{\ast}(M)\in \perf(A)$.
    So $M\in \perf(B)$ and $i_{\lambda}^{\ast}(\sigma_{\geq n}Q_{\bullet})\in i_{\lambda}^{\ast}(\perf(B))$.
    Using the above triangle we get $i_{\lambda}^{\ast}(X_{\bullet})\cong i_{\lambda}^{\ast}(Q_{\bullet})\in i_{\lambda}^{\ast}(\perf(B))$.
    
    So we have the following commutative diagram.
    \begin{equation*}
        \xymatrix{
        D^b(\modn B)\ar[r]^{\sim} & i_{\lambda}^{\ast}(D^b(\modn B))\ar@{^{(}->}[r] & D^b(\modn A)\\
        \perf(B)\ar[r]^-{\sim}\ar@{^{(}->}[u] & i_{\lambda}^{\ast}(\perf(B)) = \perf(A)\cap i_{\lambda}^{\ast}(D^b(\modn B))\ar@{^{(}->}[r]\ar@{^{(}->}[u] & \perf(A).\ar@{^{(}->}[u]
        }
    \end{equation*}

    Thus it suffices to show that the induced functor
    \begin{equation*}
        i_{\lambda}^{\ast}(D^b(\modn B))/(\perf(A)\cap i_{\lambda}^{\ast}(D^b(\modn B)))\rightarrow D_{sg}(A)
    \end{equation*}
    is fully faithful.
    To do this we apply \cite[Proposition 10.2.6]{KS06}.

    Let $f: X_{\bullet}\rightarrow Y_{\bullet}$ be a morphism in $D^b(\modn A)$ where $X_{\bullet} \in \perf(A)$ and $Y_{\bullet}\in i_{\lambda}^{\ast}(D^b(\modn B))$.
    We may assume that $X_{\bullet}\in K^b(\proj A)$ and $Y_{\bullet}$ is an upper bounded complex with terms in $\add P$.
    Suppose $X_i = 0$ for some $i > m$ and let $Z_{\bullet} = \sigma_{\leq m} Y_{\bullet} $ be the brutal truncation of $Y_{\bullet}$ at degree $\leq m$. 
    Since
    \begin{equation*}
        \Hom_{D^b(\modn A)}(X_{\bullet}, Y_{\bullet}) \cong \Hom_{K^-(\modn A)}(X_{\bullet}, Y_{\bullet}),
    \end{equation*}
    $f$ factors through $Z_{\bullet}$.
    Note that $Z_{\bullet}$ is a bounded complex with terms in $\add P$, so $Z_{\bullet}\in\perf(A)\cap i_{\lambda}^{\ast}(D^b(\modn B))$ as $P\in\perf(A)$.
    Therefore \cite[Proposition 10.2.6]{KS06} applies and the functor $D_{sg}(i_{\lambda})$ is fully faithful.

    It remains to show that $D_{sg}(i_{\lambda})$ is dense.
    Since by \cite[Corollary 2.3]{Che18}, 
    the singularity category of $A$ is the stabilization of $\underline{\modn} A$,
    the essential image $\mathcal{F}$ of 
    $D_{sg}(i_{\lambda})$ contains all objects $X\in \underline{\modn} A$ such that $\Omega^s(X) \cong i_{\lambda}(Y)$ for some $s\in \mathbb{N}$ and $Y\in \underline{\modn} B$.
    
    By $(a)$, the functor $i_{\lambda}$ induces an equivalence $i_{\rho}(\mathcal{W}) \xrightarrow{\sim} \mathcal{W}$.
    Thus by the condition given for $\mathcal{M}$ in $(b)$,
    we have that $\mathcal{M} \subset \mathcal{F}$.

    By Proposition \ref{CTApprox}, each $X\in \modn A$ admits a right minimal $\mathcal{M}$-resolution 
    \begin{equation*}
        \xymatrix@C=1em@R=1em{
        0\ar[r] & M_d\ar[r] & \cdots \ar[r] & M_2\ar[r] & M_1\ar[r] & X\ar[r] & 0
        }
    \end{equation*}
    which is exact and so we have that $X\cong M_{\bullet}$ in $D^b(\modn A)$ where 
    \begin{equation*}
        \xymatrix@C=1em@R=1em{
        M_{\bullet}: 0\ar[r] & M_d\ar[r] & \cdots \ar[r] & M_2\ar[r] & M_1\ar[r] & 0
        }.
    \end{equation*}
    Hence $X\in \tri(\mathcal{M})$ which implies that $D^b(\modn A) = \tri(\mathcal{M})$.
    
    Since $\mathcal{F}$ is a triangulated subcategory of $D_{sg}(A)$,
    it follows that $\mathcal{F} = D_{sg}(A)$.
    In other words, $D_{sg}(i_{\lambda})$ is dense.
    
    To see (c), we have that $\underline{\mathcal{M}}$ is a $d\mathbb{Z}$-cluster tilting subcategory of $D_{sg}(A)$ and hence a $(d+2)$-angulated category by Theorem \ref{Thm_Kv}.
    Then $\Sigma^{d} \underline{\mathcal{M}} \subset \underline{\mathcal{M}}$ by Remark \ref{dZ-CTInTriCat}.
    Since $i_{\lambda}: \mathcal{N} \xrightarrow{\sim} \mathcal{W}$ by (a), 
    we have that $D_{sg}(i_{\lambda})( \underline{\mathcal{N}}) = \underline{\mathcal{W}}$.
    Moreover $ \underline{\mathcal{W}} = \underline{\mathcal{M}}$ by (b).
    Hence $\Sigma'^d(\underline{\mathcal{N}}) \subset \underline{\mathcal{N}}$ which yields that $\underline{\mathcal{N}}$ is a $d\mathbb{Z}$-cluster tilting subcategory of $D_{sg}(B)$. 
    Here $\Sigma'$ is the suspension functor of $D_{sg}(B)$.
    Therefore, by restricting $D_{sg}(i_{\lambda})$ to $\underline{\mathcal{N}}$, 
    we have an equivalence $\underline{\mathcal{N}} \cong \underline{\mathcal{M}}$ as $(d+2)$-angulated categories.
\end{proof}

\begin{Rem}
    If $P$ is a projective $A$-module, condition $(i)$ and $(ii)$ are automatically satisfied. 
    Condition $(iii)$ is fulfilled if each $W\in \mathcal{W}$ has a projective resolution with all terms in $\add P$.
\end{Rem}

\subsection{Higher Nakayama algebras}\label{HighNakAlg}
We recall some definitions and basic facts about higher Nakayama algebras constructed by Jasso-K\"{u}lshammer \cite{JK19}.
We follow the notations in their paper with a slight modification (see Remark \ref{RemCoordNota}).

Recall that $\ell_{\infty} = (\ldots, \ell_{-1}, \ell_0, \ell_1, \ldots)$ is called a Kupisch series of type $A_{\infty}^{\infty}$ if for all $i\in \mathbb{Z}$ there are inequalities $1\leq \ell_i \leq \ell_{i-1} + 1$. 
\begin{itemize}
    \item $\ell_{\infty}$ is connected if  $\ell_i \geq 2$ for all $ i\in \mathbb{Z}$.
    \item 
    $\ell_{\infty}$ is $\ell$-bounded for some positive integer $\ell$ if $\ell_i \leq \ell$ for all $ i\in \mathbb{Z}$.
    \item $\ell_{\infty}$ is $n$-periodic if $\ell_i = \ell_{i + n}$ for $n\in \mathbb{N}$, in this case $\underline{\ell}$ is called Kupisch series of type $\widetilde{\mathbb{A}}_{n-1}$ and we use the notation $\underline{\ell} = (\ell_1, \ell_2, \ldots, \ell_n)$. 
\end{itemize}
Denote by $\ell_{\infty}[1] = (\ldots, \ell_{-1}[1], \ell_0[1], \ldots)$ the Kupisch series obtained from $\ell_{\infty}$ by letting $\ell_i[1] = \ell_{i+1}$.

Let $d$ be a positive integer.
We recall the definition of ordered sequences $(os_{\ell_{\infty}}^d, \preccurlyeq)$ from \cite{JK19}
\begin{equation*}
        os_{\ell_{\infty}}^d := \{x = (x_1, x_2, \ldots, x_d)  \mid x_1 < x_2 < \cdots < x_d \text{ and } x_d - x_1 + 1 \leq \ell_{x_d} + d - 1\},
\end{equation*}
with the relation $\preccurlyeq$ defined as $x \preccurlyeq y$ if $x_1 \leq y_1 < x_2\leq y_2 < \cdots < x_d \leq y_d$ for $x=(x_1, \ldots, x_d), y = (y_1, \ldots, y_d) \in os_{\ell_{\infty}}^d$. 

Now we describe the $d$-Nakayama algebra of type $A_{\infty}^{\infty}$ with Kupisch series $\ell_{\infty}$ by quiver with relations.
The set of vertices of quiver $Q_{\ell_{\infty}}^d$ is the set $os_{\ell_{\infty}[1]}^d$.
Let $\{e_i\mid 1\leq i\leq n\}$ be the standard basis of $\mathbb{Z}^n$.
There is an arrow $a_i(x) : x\rightarrow x + e_i$ whenever $x + e_i \in os_{\ell_{\infty}[1]}^d$.
Let $I$ be the ideal of the path category $kQ_{\ell_{\infty}}^d$ generated by  $a_i(x + e_j)a_j(x) - a_j(x + e_i)a_i(x)$ with $1\leq i,j\leq n$. 
By convention, $a_i(x) = 0$ whenever $x$ or $x+e_i$ is not in  $os_{\ell_{\infty}[1]}^d$, hence some of the relations are indeed zero relations.
Then the $d$-Nakayama algebra of type $A_{\infty}^{\infty}$ with Kupisch series $\ell_{\infty}$ is given by $\mathcal{A}_{\ell_{\infty}}^{(d)} = kQ_{\ell_{\infty}}^d / I$.
\begin{Rem}\label{RemCoordNota}
    \begin{itemize}
        \item [(i)] The definition of $os_{\ell_{\infty}}^{d}$ is slightly different from that in \cite[Definition 1.9]{JK19}.
        We add $(1,2,\ldots,d)$ to each of the ordered sequence defined in \cite{JK19} to make it strictly increasing.
        \item [(ii)] By definition $\mathcal{A}_{\ell_{\infty}}^{(d)}$ is a locally bounded $k$-linear category.
        By abuse of notation,
        we still call it an algebra.
        We also identify categories with finitely many objects and algebras.
    \end{itemize}
\end{Rem}
By construction in \cite{JK19}, $\mathcal{A}_{\ell_{\infty}}^{(d)}$ has a distinguished $d\mathbb{Z}$-cluster tilting subcategory  $$\mathcal{M}_{\ell_{\infty}}^{(d)} = \{\widehat{M}(x) \mid x\in os_{\ell_{\infty}}^{d+1}\}.$$
Here as a representation $\widehat{M}(x)$ assigns $k$ to vertex $z\in os_{\ell_{\infty}[1]}^{d}$ if $(x_1, \ldots, x_d) \preccurlyeq z \preccurlyeq (x_2-1, \ldots, x_{d+1}-1)$ and $0$ otherwise.
All arrows $k\rightarrow k$ act as identity, while other arrows act as zero. 

Then
$$\Hom_{\mathcal{A}_{\ell_{\infty}}^{(d)}}(\widehat{M}(x),\widehat{M}(y)) \cong \left\{ \begin{array}{ll}
        kf_{yx} &  x \preccurlyeq y\\
        0 & \text{ otherwise.}
    \end{array} 
    \right.$$
Here $f_{yx}$ is given by $k\xrightarrow{1} k$ at vertices $z$ where $\widehat{M}(x)_z = \widehat{M}(y)_z = k$ and $0$ otherwise.
The composition of morphisms in $\mathcal{M}_{\ell_{\infty}}^{(d)}$ is completely determined by
\begin{equation*}
    f_{zy}\circ f_{yx} =
    \left\{ \begin{array}{ll}
        f_{zx} &  x \preccurlyeq z\\
        0 & \text{ otherwise.}
    \end{array} 
    \right.
\end{equation*}
In the following proposition, we recall some homological properties of modules in  $\mathcal{M}_{\ell_{\infty}}^{(d)}$ described by combinatorial data.
\begin{Prop}\label{Prop_modcomb}\cite[Proposition 2.22, Proposition 2.25, Theorem 3.16]{JK19}
Let $x\in os_{\ell_{\infty}}^{d+1}$. The following statements hold.
    \begin{itemize}
      \item [(i)] $\top\widehat{M}(x) = S_{(x_2 - 1, \ldots, x_{d+1} - 1)}$ and $\soc \widehat{M}(x) = S_{(x_1, \ldots, x_d)}$. 
      \item [(ii)] $\widehat{M}(x)$ is simple if and only if $x = (i, i+1, \ldots, i + d)$ for some integer $i$.  
      \item [(iii)] $\widehat{M}(x)$ is projective if and only if $x_1 = \min \{y \mid (y, x_2, \ldots, x_{d+1}) \in os_{\ell_{\infty}}^{d+1}\}$, or equivalently,
      $x_1 = x_{d+1} - \ell_{x_{d+1}} - d  + 1$.
      \item [(iv)] 
      $\widehat{M}(x)$ is injective if and only if $x_{d+1} = \max\{y \mid (x_1, \ldots, x_d, y)\in os_{\ell_{\infty}}^{d+1}\}$.
      \item [(v)] If $x_1 > x_{d+1} -\ell_{x_{d+1}} - d + 1$, then there exists an exact sequence
          \begin{equation*}
              \xymatrix@=1em@R=1em{
                0\ar[r] & \Omega^d(\widehat{M}(x))\ar[r] & P_d\ar[r] & \cdots\ar[r] & P_1\ar[r] & \widehat{M}(x)\ar[r] & 0
              }
          \end{equation*}
        with $P_i = \widehat{M}(x_{d+1} - \ell_{x_{d+1}} - d + 1, x_1, \ldots, x_{i-1}, x_{i+1}, \ldots, x_{d+1})$ for $1\leq i\leq d$ and $\Omega^d(\widehat{M}(x)) = \widehat{M}(x_{d+1} - \ell_{x_{d+1}} - d + 1, x_1, \ldots, x_d)$. 
      \item [(vi)] $\tau_d(\widehat{M}(x)) = \widehat{M}(x_1 - 1, x_2 - 1, \ldots, x_{d+1} - 1)$.
      \item [(vii)] For each $i\in \mathbb{Z}$ the indecomposable projective $\mathcal{A}_{\ell_{\infty}}^{(d)}$-module at the vertex $(i-d + 1, \ldots, i)$ has Loewy length $\ell_i$.
  \end{itemize}
\end{Prop}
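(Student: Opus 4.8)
The final statement is Proposition~\ref{Prop_modcomb}, which collects the combinatorial descriptions of the homological invariants of the modules $\widehat{M}(x)$ in the distinguished $d\mathbb{Z}$-cluster tilting subcategory $\mathcal{M}_{\ell_\infty}^{(d)}$. Since this is cited from \cite{JK19}, the proof we would give is an indication of where each part comes from and how the pieces fit together under the coordinate convention of Remark~\ref{RemCoordNota}. The governing idea is that each $\widehat{M}(x)$ is, as a representation of $Q_{\ell_\infty}^d$, the ``interval module'' supported on $\{z \in os_{\ell_\infty[1]}^d \mid (x_1,\ldots,x_d) \preccurlyeq z \preccurlyeq (x_2-1,\ldots,x_{d+1}-1)\}$ with all arrows between support vertices acting as the identity. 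Almost every claim reduces to reading off the endpoints of this interval of support, so the plan is to extract statements (i)--(iv), (vi) from the support description directly, and then treat (v) and (vii) as the substantive homological inputs.

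\textbf{Reading off (i)--(iv) and (vi).} First I would establish (i): the socle is the simple at the unique minimal support vertex $(x_1,\ldots,x_d)$ and the top is the simple at the unique maximal support vertex $(x_2-1,\ldots,x_{d+1}-1)$, since the identity-action on arrows forces a single source and a single sink in the support poset. Statement (ii) is then immediate: $\widehat{M}(x)$ is simple exactly when these two endpoints coincide, i.e. $(x_1,\ldots,x_d) = (x_2-1,\ldots,x_{d+1}-1)$, which unwinds to $x_{j+1} = x_j + 1$ for all $j$, that is $x = (i,i+1,\ldots,i+d)$. For (iii) and (iv) I would use the characterization of projectives and injectives in $\mathcal{A}_{\ell_\infty}^{(d)}$ as the modules whose support is maximal in one coordinate direction: $\widehat{M}(x)$ is projective precisely when $x_1$ is as small as the bound $x_d - x_1 + 1 \leq \ell_{x_d} + d - 1$ permits (rewriting to $x_1 = x_{d+1} - \ell_{x_{d+1}} - d + 1$ after shifting indices appropriately), and dually for injectivity in terms of $x_{d+1}$. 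Statement (vi), the formula for $\tau_d$, I would obtain by noting that the $d$-Auslander--Reiten translate simply shifts all coordinates down by one, which is exactly the combinatorial shift recorded in \cite[Theorem 3.16]{JK19}; this is compatible with the identification $\tau_d = \tau\Omega^{d-1}$ fixed in our conventions.

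\textbf{The substantive parts (v) and (vii).} The genuine content is (v): the $d$-th syzygy computation with its explicit projective $d$-resolution. Here the plan is to observe that when $x_1 > x_{d+1} - \ell_{x_{d+1}} - d + 1$ the module $\widehat{M}(x)$ is non-projective, and its minimal projective resolution is governed by the $d$-exact structure of $\mathcal{M}_{\ell_\infty}^{(d)}$ as a $d$-abelian category. The projectives $P_i = \widehat{M}(x_{d+1} - \ell_{x_{d+1}} - d + 1, x_1,\ldots,x_{i-1},x_{i+1},\ldots,x_{d+1})$ are exactly the projective covers obtained by ``completing'' $x$ to the minimal first coordinate in each of the $d$ possible ways, and $\Omega^d(\widehat{M}(x)) = \widehat{M}(x_{d+1}-\ell_{x_{d+1}}-d+1, x_1,\ldots,x_d)$ is the resulting kernel. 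I expect this to be the main obstacle, because verifying exactness of the length-$(d+2)$ sequence requires tracking the identity-action of arrows across overlapping interval supports and confirming the alternating-sum/inclusion-exclusion pattern of the $P_i$; this is precisely the combinatorial heart of \cite[Proposition 2.25]{JK19} and cannot be reduced to a one-line support argument. Finally, (vii) is a direct Loewy-length count: the indecomposable projective at vertex $(i-d+1,\ldots,i)$ has support running down to the bound set by $\ell_i$, so its Loewy length equals $\ell_i$. I would close by remarking that all seven statements, once the coordinate shift of Remark~\ref{RemCoordNota} is applied, agree termwise with \cite[Proposition 2.22, Proposition 2.25, Theorem 3.16]{JK19}, which completes the proof.
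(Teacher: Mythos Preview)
Your proposal is more than adequate: the paper itself gives no proof of this proposition at all. It is stated as a citation of \cite[Proposition 2.22, Proposition 2.25, Theorem 3.16]{JK19} and is used without further argument. Your sketch of how each item follows from the interval-module description of $\widehat{M}(x)$ is correct in spirit and goes well beyond what the paper provides; if anything, you could simply write ``See \cite{JK19}'' and match the paper exactly.
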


Recall that for a locally bounded $k$-linear category $\mathcal{C}$, 
a group action given by $G$ is called admissible if $gx\ncong x$ for any indecomposable object $x$ in $\mathcal{C}$ and $g\in G\backslash \{1\}$.

Let $n$ be a fixed positive integer.
From now on we assume $\ell_{\infty}$ is $n$-periodic and let $\underline{\ell} = (\ell_1, \ldots, \ell_n)$.
Then $G=\langle \sigma\rangle$ where $\sigma=\tau_d^n$, is an 
 admissible group acting on $\mathcal{A}_{\ell_{\infty}}^{(d)}$.
 Jasso-K\"{u}lshammer \cite{JK19} constructed $d$-Nakayama algebras of type $\widetilde{\mathbb{A}}_{n-1}$ as the orbit category
\begin{equation*}
    A_{\underline{\ell}}^{(d)} := \mathcal{A}_{\ell_{\infty}}^{(d)} / G.
\end{equation*}

The covering functor:
\begin{equation*}
    F: \mathcal{A}_{\ell_{\infty}}^{(d)} \rightarrow A_{\underline{\ell}}^{(d)}
\end{equation*}
induces an exact functor
\begin{equation*}
    F^{\ast}: \Mod A_{\underline{\ell}}^{(d)} \rightarrow \Mod \mathcal{A}_{\ell_{\infty}}^{(d)}
\end{equation*}
called pull-up, 
given by $F^{\ast}(M) = M\circ F$.
This functor has a left adjoint 
\begin{equation*}
    F_{\ast}: \Mod \mathcal{A}_{\ell_{\infty}}^{(d)} \rightarrow \Mod A_{\underline{\ell}}^{(d)}
\end{equation*}
called push-down, which is also exact.
In particular, $F_{\ast}$ induces a functor 
$F_{\ast}: \modn \mathcal{A}_{\ell_{\infty}}^{(d)} \rightarrow \modn A_{\underline{\ell}}^{(d)}$ between the category of finitely generated modules of $\mathcal{A}_{\ell_{\infty}}^{(d)}$ and $A_{\underline{\ell}}^{(d)}$ respectively.\\
The $d\mathbb{Z}$-cluster tilting subcategory  $\mathcal{M}_{\ell_{\infty}}^{(d)}$ is $G$-equivariant, i.e. $\mathcal{M}_{\ell_{\infty}}^{(d)}$ and $\sigma_{\ast}(\mathcal{M}_{\ell_{\infty}}^{(d)})$ have the same isomorphism closure in $\modn \mathcal{A}_{\ell_{\infty}}^{(d)}$ where $\sigma_{\ast}: \modn \mathcal{A}_{\ell_{\infty}}^{(d)}\rightarrow \modn\mathcal{A}_{\ell_{\infty}}^{(d)}$ is the induced automorphism of module category defined by precomposition with $\sigma^{-1}$. 
Then \cite[Theorem 2.3]{DI20} implies that $F_{\ast}\mathcal{M}_{\ell_{\infty}}^{(d)}$ is a $d\mathbb{Z}$-cluster tilting subcategory of $\modn A_{\underline{\ell}}^{(d)}$, which we denote by $\mathcal{M}_{\underline{\ell}}^{(d)}$, that is
\begin{equation*}\label{CT_Lift}
    \mathcal{M}_{\underline{\ell}}^{(d)} = F_{\ast}\mathcal{M}_{\ell_{\infty}}^{(d)} = \add\{M(x) \mid x\in os_{\ell_{\infty}}^{d+1}\} \text{ where } M(x) = F_{\ast}\widehat{M}(x) .
\end{equation*}
Note that $M(x) \cong M(\sigma(x))$ and
the orbit category $\mathcal{M}_{\underline{\ell}}^{(d)}$ is a graded category with the natural grading given by $G$.
More precisely,
take $M(x), M(y)\in \mathcal{M}_{\underline{\ell}}^{(d)}$ for $x, y\in os_{\ell_{\infty}}^{d+1}$. 
Then
\begin{equation*}
    \Hom_{A_{\underline{\ell}}^{(d)}}(M(x), M(y)) = \bigoplus_{i = a_{yx}}^{b_{yx}}\Hom_{\mathcal{A}_{\ell_{\infty}}^{(d)}}(\widehat{M}(x), \widehat{M}(\sigma^i(y))).
\end{equation*}
Here $a_{yx}$ (resp.$b_{yx}$) is the minimal (resp.maximal) integer such that $x\preccurlyeq \sigma^i(y)$.
Denote by $f_{yx}^i\in \Hom_{A_{\underline{\ell}}^{d}}(M(x),M(y))$ the image of $f_{\sigma^i(y),x}\in \Hom_{\mathcal{A}_{\ell_{\infty}}^{(d)}}(\widehat{M}(x) , \widehat{M}(\sigma^i(y)))$ in $\mathcal{M}_{\underline{\ell}}^{(d)}$.
By the composition law in $\mathcal{M}_{\ell_{\infty}}^{(d)}$, we have 
\begin{equation*}
    f_{zy}^j\circ f_{yx}^i = \left\{ \begin{array}{ll}
        f_{zx}^{i+j} &  x \preccurlyeq \sigma^{i+j}(z)\\
        0 & \text{ otherwise.}
    \end{array} 
    \right.
\end{equation*}
Note that $\dim_k\Hom_{A_{\underline{\ell}}^{(d)}}(M(x), M(y)) = b_{yx} - a_{yx} + 1$.

\begin{Rem}
     A non-semisimple $d$-Nakayama algebra of type  $\widetilde{\mathbb{A}}_{n-1}$ is self-injective if and only if $\underline{\ell} = (\ell, \ldots, \ell)$ for some integer $\ell \geq 2$ \cite[Theorem 4.10]{JK19}. 
In this case, we denote $A_{\underline{\ell}}^{(d)}$ by $A_{n, \ell}^{(d)}$ and its distinguished $d\mathbb{Z}$-cluster tilting subcategory by $\mathcal{M}_{n, \ell}^{(d)}$ following the notations in \cite[Section 4.1]{JK19}.
\end{Rem}
\begin{Eg}\label{egCTSubCat}
    Let $d = 2$, $n=5$ and $\underline{\ell} = (3, 4, 4, 4, 4)$ be a periodic Kupisch series.
    Then the Gabriel quiver $Q_{\underline{\ell}}^{(2)}$ of $A_{\underline{\ell}}^{(2)}$ is given as follows.
    \begin{equation*}
        \begin{xy}
            0;<20pt,0cm>:<20pt,20pt>::
            (0,0) *+{12} ="12",
            (0,1) *+{13} ="13",
            (0,2) *+{14} ="14",
            (2,0) *+{23} ="23",
            (2,1) *+{24} ="24",
            (2,2) *+{25} ="25",
            (2,3) *+{26} ="26",
            (4,0) *+{34} ="34",
            (4,1) *+{35} ="35",
            (4,2) *+{36} ="36",
            (4,3) *+{37} ="37",
            (6,0) *+{45} ="45",
            (6,1) *+{46} ="46",
            (6,2) *+{47} ="47",
            (6,3) *+{48} ="48",
            (8,0) *+{56} ="56",
            (8,1) *+{57} ="57",
            (8,2) *+{58} ="58",
            (8,3) *+{59} ="59",
            (10,0) *+{12} ="67",
            (10,1) *+{13} ="68",
            (10,2) *+{14} ="69",
            "12", {\ar "13"},
            "13", {\ar "14"},
            "23", {\ar "24"},
            "24", {\ar "25"},
            "25", {\ar "26"},
            "34", {\ar "35"},
            "35", {\ar "36"},
            "36", {\ar "37"},
            "45", {\ar "46"},
            "46", {\ar "47"},
            "47", {\ar "48"},
            "56", {\ar "57"},
            "57", {\ar "58"},
            "58", {\ar "59"},
            "67", {\ar "68"},
            "68", {\ar "69"},
             "13", {\ar "23"},
             "14", {\ar "24"},
             "24", {\ar "34"},
             "25", {\ar "35"},
             "26", {\ar "36"},
             "35", {\ar "45"},
             "36", {\ar "46"},
             "46", {\ar "56"},
             "37", {\ar "47"},
             "47", {\ar "57"},
             "57", {\ar "67"},
             "48", {\ar "58"},
             "58", {\ar "68"},
             "59", {\ar "69"},
        \end{xy}
    \end{equation*}
    Here the leftmost and the rightmost lines should be identified.
    The modules $M(358)$ and $M(368)$ are given as representations of $Q_{\underline{\ell}}^{(2)}$.
    Note that both of them are projective $A_{\underline{\ell}}^{(2)}$-modules.
    
    $M(358)$:
    \begin{equation*}
        \begin{xy}
            0;<20pt,0cm>:<20pt,20pt>::
            (0,0) *+{0} ="12",
            (0,1) *+{0} ="13",
            (0,2) *+{0} ="14",
            (2,0) *+{0} ="23",
            (2,1) *+{0} ="24",
            (2,2) *+{0} ="25",
            (2,3) *+{0} ="26",
            (4,0) *+{0} ="34",
            (4,1) *+{k} ="35",
            (4,2) *+{k} ="36",
            (4,3) *+{k} ="37",
            (6,0) *+{k} ="45",
            (6,1) *+{k} ="46",
            (6,2) *+{k} ="47",
            (6,3) *+{0} ="48",
            (8,0) *+{0} ="56",
            (8,1) *+{0} ="57",
            (8,2) *+{0} ="58",
            (8,3) *+{0} ="59",
            (10,0) *+{0} ="67",
            (10,1) *+{0} ="68",
            (10,2) *+{0} ="69",
            "13", {\ar "12"},
            "14", {\ar "13"},
            "24", {\ar "23"},
            "25", {\ar "24"},
            "26", {\ar "25"},
            "35", {\ar "34"},
            "36", {\ar_1 "35"},
            "37", {\ar_1 "36"},
            "46", {\ar^1 "45"},
            "47", {\ar^1 "46"},
            "48", {\ar "47"},
            "57", {\ar "56"},
            "58", {\ar "57"},
            "59", {\ar "58"},
            "68", {\ar "67"},
            "69", {\ar "68"},
             "23", {\ar "13"},
             "24", {\ar "14"},
             "34", {\ar "24"},
             "35", {\ar "25"},
             "36", {\ar "26"},
             "45", {\ar^1 "35"},
             "46", {\ar^1 "36"},
             "56", {\ar "46"},
             "47", {\ar_1 "37"},
             "57", {\ar "47"},
             "67", {\ar "57"},
             "58", {\ar "48"},
             "68", {\ar "58"},
             "69", {\ar "59"},
        \end{xy}
    \end{equation*}
    
    $M(368)$:
    \begin{equation*}
        \begin{xy}
            0;<20pt,0cm>:<20pt,20pt>::
            (0,0) *+{0} ="12",
            (0,1) *+{0} ="13",
            (0,2) *+{0} ="14",
            (2,0) *+{0} ="23",
            (2,1) *+{0} ="24",
            (2,2) *+{0} ="25",
            (2,3) *+{0} ="26",
            (4,0) *+{0} ="34",
            (4,1) *+{0} ="35",
            (4,2) *+{k} ="36",
            (4,3) *+{k} ="37",
            (6,0) *+{0} ="45",
            (6,1) *+{k} ="46",
            (6,2) *+{k} ="47",
            (6,3) *+{0} ="48",
            (8,0) *+{k} ="56",
            (8,1) *+{k} ="57",
            (8,2) *+{0} ="58",
            (8,3) *+{0} ="59",
            (10,0) *+{0} ="67",
            (10,1) *+{0} ="68",
            (10,2) *+{0} ="69",
            "13", {\ar "12"},
            "14", {\ar "13"},
            "24", {\ar "23"},
            "25", {\ar "24"},
            "26", {\ar "25"},
            "35", {\ar "34"},
            "36", {\ar "35"},
            "37", {\ar_1 "36"},
            "46", {\ar "45"},
            "47", {\ar_1 "46"},
            "48", {\ar "47"},
            "57", {\ar_1 "56"},
            "58", {\ar "57"},
            "59", {\ar "58"},
            "68", {\ar "67"},
            "69", {\ar "68"},
             "23", {\ar "13"},
             "24", {\ar "14"},
             "34", {\ar "24"},
             "35", {\ar "25"},
             "36", {\ar "26"},
             "45", {\ar "35"},
             "46", {\ar^1 "36"},
             "56", {\ar^1 "46"},
             "47", {\ar_1 "37"},
             "57", {\ar_1 "47"},
             "67", {\ar "57"},
             "58", {\ar "48"},
             "68", {\ar "58"},
             "69", {\ar "59"},
        \end{xy}.
    \end{equation*}
    There is a nonzero morphism $\phi : M(358)\rightarrow M(368)$ since $(358) \preccurlyeq (368)$. 
    Indeed, $\phi_{36}=\phi_{37}=\phi_{46}=\phi_{47}=1_k$ and $\phi_{ij}=0$ for all other indices $ij$.
    
    The Auslander-Reiten quiver of the distinguished $2\mathbb{Z}$-cluster tilting subcategory $\mathcal{M}_{\underline{\ell}}^{(2)}$ is given below.
   \begin{equation*}
{\tiny
\begin{xy}
0;<23pt,0cm>:<10pt,20pt>:: 
(0,0) *+{123} ="123",
(0,1) *+{124} ="124",
(0,2) *+{125} ="125",
(1,1) *+{134} ="134",
(1,2) *+{135} ="135",
(2,2) *+{145} ="145",
(2.5,0) *+{234} ="234",
(2.5,1) *+{235} ="235",
(2.5,2) *+{236} ="236",
(2.5,3) *+{237} ="237",
(3.5,1) *+{245} ="245",
(3.5,2) *+{246} ="246",
(3.5,3) *+{247} ="247",
(4.5,2) *+{256} ="256",
(4.5,3) *+{257} ="257",
(5.5,3) *+{267} ="267",
(6,0) *+{345} ="345",
(6,1) *+{346} ="346",
(6,2) *+{347} ="347",
(6,3) *+{348} ="348",
(7,1) *+{356} ="356",
(7,2) *+{357} ="357",
(7,3) *+{358} ="358",
(8,2) *+{367} ="367",
(8,3) *+{368} ="368",
(9,3) *+{378} ="378",
(9.5,0) *+{456} ="456",
(9.5,1) *+{457} ="457",
(9.5,2) *+{458} ="458",
(9.5,3) *+{459} ="459",
(10.5,1) *+{467} ="467",
(10.5,2) *+{468} ="468",
(10.5,3) *+{469} ="469",
(11.5,2) *+{478} ="478",
(11.5,3) *+{479} ="479",
(12.5,3) *+{489} ="489",
(13,0) *+{567} ="567",
(13,1) *+{568} ="568",
(13,2) *+{569} ="569",
(13,3) *+{56X} ="56X",
(14,1) *+{578} ="578",
(14,2) *+{579} ="579",
(14,3) *+{57X} ="57X",
(15,2) *+{589} ="589",
(15,3) *+{58X} ="58X",
(16,3) *+{59X} ="59X",
(16.5,0) *+{123} ="678",
(16.5,1) *+{124} ="679",
(16.5,2) *+{125} ="67X",
(16.5,3) *+{ } ="67Y",
"123", {\ar"124"},
"124", {\ar"125"},
"124", {\ar"134"},
"125", {\ar"135"},
"134", {\ar"135"},
"134", {\ar"234"},
"135", {\ar"145"},
"135", {\ar"235"},
"145", {\ar"245"},
"234", {\ar"235"},
"235", {\ar"245"},
"235", {\ar"236"},
"236", {\ar"246"},
"246", {\ar"256"},
"245", {\ar"246"},
"236", {\ar"237"},
"246", {\ar"247"},
"256", {\ar"257"},
"237", {\ar"247"},
"247", {\ar"257"},
"257", {\ar"267"},
"345", {\ar"346"},
"346", {\ar"347"},
"347", {\ar"348"},
"356", {\ar"357"},
"357", {\ar"358"},
"367", {\ar"368"},
"346", {\ar"356"},
"347", {\ar"357"},
"357", {\ar"367"},
"348", {\ar"358"},
"358", {\ar"368"},
"368", {\ar"378"},
"456", {\ar"457"},
"457", {\ar"458"},
"458", {\ar"459"},
"467", {\ar"468"},
"468", {\ar"469"},
"478", {\ar"479"},
"457", {\ar"467"},
"458", {\ar"468"},
"468", {\ar"478"},
"459", {\ar"469"},
"469", {\ar"479"},
"479", {\ar"489"},
"567", {\ar"568"},
"568", {\ar"569"},
"569", {\ar"56X"},
"578", {\ar"579"},
"579", {\ar"57X"},
"589", {\ar"58X"},
"568", {\ar"578"},
"569", {\ar"579"},
"579", {\ar"589"},
"56X", {\ar"57X"},
"57X", {\ar"58X"},
"58X", {\ar"59X"},
"678", {\ar"679"},
"679", {\ar"67X"},
"245", {\ar"345"},
"246", {\ar"346"},
"256", {\ar"356"},
"247", {\ar"347"},
"257", {\ar"357"},
"267", {\ar"367"},
"356", {\ar"456"},
"357", {\ar"457"},
"367", {\ar"467"},
"358", {\ar"458"},
"368", {\ar"468"},
"378", {\ar"478"},
"467", {\ar"567"},
"468", {\ar"568"},
"478", {\ar"578"},
"469", {\ar"569"},
"479", {\ar"579"},
"489", {\ar"589"},
"578", {\ar"678"},
"579", {\ar"679"},
"57X", {\ar"67X"},
\end{xy}
}
\end{equation*}
\end{Eg}

\begin{Eg}\label{egSelfinj}
    Let $d = 2$, $n=4$ and $\ell = 3$. 
    Then the Gabriel quiver $Q_{4, 3}^{(2)}$ of the self-injective $2$-Nakayama algebra $A_{4, 3}^{(2)}$ is given as follows.
    \begin{equation*}
        \begin{xy}
            0;<20pt,0cm>:<20pt,20pt>::
            (0,0) *+{12} ="12",
            (0,1) *+{13} ="13",
            (0,2) *+{14} ="14",
            (2,0) *+{23} ="23",
            (2,1) *+{24} ="24",
            (2,2) *+{25} ="25",
            (4,0) *+{34} ="34",
            (4,1) *+{35} ="35",
            (4,2) *+{36} ="36",
            (6,0) *+{45} ="45",
            (6,1) *+{46} ="46",
            (6,2) *+{47} ="47",
            (8,0) *+{12} ="56",
            (8,1) *+{13} ="57",
            (8,2) *+{14} ="58",
            "12", {\ar "13"},
            "13", {\ar "14"},
            "23", {\ar "24"},
            "24", {\ar "25"},
            "34", {\ar "35"},
            "35", {\ar "36"},
            "45", {\ar "46"},
            "46", {\ar "47"},
            "56", {\ar "57"},
            "57", {\ar "58"},
            "13", {\ar "23"},
            "14", {\ar "24"},
            "24", {\ar "34"},
            "25", {\ar "35"},
            "35", {\ar "45"},
            "36", {\ar "46"},
            "46", {\ar "56"},
            "47", {\ar "57"},
        \end{xy}
    \end{equation*}
    And the distiguished $2\mathbb{Z}$-cluster tilting subcategory $\mathcal{M}_{4,3}^{(2)}$ is given below.
    \begin{equation*}
{\small
    \begin{xy}
        0;<28pt,0cm>:<10pt,20pt>::
        (0,0) *+{123} ="123",
        (0,1) *+{124} ="124",
        (0,2) *+{125} ="125",
        (1,1) *+{134} ="134",
        (1,2) *+{135} ="135",
        (2,2) *+{145} ="145",
        (3,0) *+{234} ="234",
        (3,1) *+{235} ="235",
        (3,2) *+{236} ="236",
        (4,1) *+{245} ="245",
        (4,2) *+{246} ="246",
        (5,2) *+{256} ="256",
        (6,0) *+{345} ="345",
        (6,1) *+{346} ="346",
        (6,2) *+{347} ="347",
        (7,1) *+{356} ="356",
        (7,2) *+{357} ="357",
        (8,2) *+{367} ="367",
        (9,0) *+{456} ="456",
        (9,1) *+{457} ="457",
        (9,2) *+{458} ="458",
        (10,1) *+{467} ="467",
        (10,2) *+{468} ="468",
        (11,2) *+{478} ="478",
        (12,0) *+{123} ="567",
        (12,1) *+{124} ="568",
        (12,2) *+{125} ="569",
        "123", {\ar"124"},
        "124", {\ar"125"},
        "134", {\ar"135"},
        "124", {\ar"134"},
        "125", {\ar"135"},
        "135", {\ar"145"},
        "234", {\ar"235"},
        "235", {\ar"236"},
        "245", {\ar"246"},
        "235", {\ar"245"},
        "236", {\ar"246"},
        "246", {\ar"256"},
        "345", {\ar"346"},
        "346", {\ar"347"},
        "356", {\ar"357"},
        "346", {\ar"356"},
        "347", {\ar"357"},
        "357", {\ar"367"},
        "456", {\ar"457"},
        "457", {\ar"458"},
        "467", {\ar"468"},
        "457", {\ar"467"},
        "458", {\ar"468"},
        "468", {\ar"478"},
        "567", {\ar"568"},
        "568", {\ar"569"},
        "134", {\ar"234"},
        "135", {\ar"235"},
        "145", {\ar"245"},
        "245", {\ar"345"},
        "246", {\ar"346"},
        "256", {\ar"356"},
        "356", {\ar"456"},
        "357", {\ar"457"},
        "367", {\ar"467"},
        "467", {\ar"567"},
        "468", {\ar"568"},
    \end{xy}}
\end{equation*}
\end{Eg}

\section{Singularity category of higher Nakayama algebras}

\subsection{Resolution quiver for $A_{\underline{\ell}}^{(d)}$}\label{section_resqui}
In this section, we introduce the resolution quiver of a given higher Nakayama algebra,
which is defined combinatorically but reflects certain homological properties of the algebra.

For fixed positive integers $n$ and $d$,
let $A = A_{\underline{\ell}}^{(d)}$ be the $d$-Nakayama algebra of type $\widetilde{\mathbb{A}}_{n-1}$ with Kupisch series $\underline{\ell} = (\ell_1, \ldots, \ell_n)$.
Recall that $\mathcal{M}_{\underline{\ell}}^{(d)} = \add \{M(x) \mid x\in os_{\ell_{\infty}}^{d+1}\}$ is the distinguished $d\mathbb{Z}$-cluster tilting subcategory of $\modn A$ as we described in Section \ref{HighNakAlg}.
Let $\mathcal{P} = \add \{ M(x) \in \mathcal{M}_{\underline{\ell}}^{(d)}  \mid x_{d+1} - x_1 + 1 = \ell_{x_{d+1}} + d\}$ be the additive category consisting of projective objects in $\mathcal{M}_{\underline{\ell}}^{(d)}$.

Define the map
\begin{equation*}
    f: \mathbb{Z} \rightarrow \mathbb{Z}
\end{equation*}
by $f(i) = i  - \ell_i - d + 1$. 
\begin{Rem}\label{f_periodic} The map $f$ has the following properties.
    \begin{itemize}
        \item [(i)] We have that
        $f(i+n) = f(i)+n$ since $\ell_i = \ell_{i+n}$. 
        Indeed,
        \begin{equation*}
            f(i + n) = (i+n)-\ell_{i+n}-d+1 = i-\ell_{i}-d+1 +n = f(i)+n.
        \end{equation*}
        \item [(ii)] $f$ is non-decreasing. 
        We know that $i - \ell_i \geq (i - 1) - \ell_{i - 1}$ since $\ell_i \leq \ell_{i-1} + 1$. 
        This implies $f(i) \geq f(i-1)$.
    \end{itemize}
\end{Rem}
By Remark \ref{f_periodic} $(i)$, $f$ induces a function 
\begin{align*}
    \overline{f}: \{1, 2, \ldots, n\} \rightarrow \{1, 2, \ldots, n\}
\end{align*}
such that $ \overline{f}(i) \equiv i - \ell_{i} - d + 1\mod  n $.
\begin{Def}
    The resolution quiver $R(A)$ of $A$ is defined as follows:
    \begin{itemize}
        \item vertices: $1, 2, \ldots, n$,
        \item arrows: $i\rightarrow \overline{f}(i)$ for each vertex $i$.
    \end{itemize}
\end{Def}

The notion resolution quiver is justified in the sense that $\overline{f}$, which is a normalization of $f$, detects periodic $\Omega^d$-orbits of $\mathcal{M}_{\underline{\ell}}^{(d)}$ as shown in Proposition \ref{prop_f_detects_syzygyorbit}.
Moreover, when $d=1$, our definition coincides with the definition of the resolution quiver for usual Nakayama algebras in \cite{Sh15}, which was originally introduced in \cite{Rin13}.

\begin{Prop}\label{prop_f_detects_syzygyorbit} Let $x = (x_1, \ldots, x_{d+1})\in os_{\ell_{\infty}}^{d+1}$. Then $M(x) \in \mathcal{P}$ if and only if $x_1 = f(x_{d+1})$. 
Otherwise $\Omega^d (M(x)) = M(f(x_{d+1}), x_1, \ldots, x_d)$. 
Moreover, we have the following exact sequence
\begin{equation*}
    \xymatrix@=1em@R=1em{0\ar[r] & \Omega^dM(x)\ar[r] & P_d\ar[r] & \cdots\ar[r] & P_1\ar[r] & M(x)\ar[r] & 0}
\end{equation*}
where $P_i = M(f(x_{d+1}), x_1, \ldots, x_{i-1}, x_{i+1}, \ldots, x_{d+1})$ is projective for $1\leq i\leq d$.
\end{Prop}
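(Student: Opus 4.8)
The plan is to reduce everything to the covering algebra $\mathcal{A}_{\ell_{\infty}}^{(d)}$, where the homological data is recorded combinatorially in Proposition \ref{Prop_modcomb}, and then to transport the result down to $A = A_{\underline{\ell}}^{(d)}$ along the exact push-down functor $F_{\ast}$. The first assertion is essentially a matter of unwinding definitions. The defining index condition of $\mathcal{P}$ rearranges as
\begin{equation*}
    x_{d+1} - x_1 + 1 = \ell_{x_{d+1}} + d \iff x_1 = x_{d+1} - \ell_{x_{d+1}} - d + 1 = f(x_{d+1}),
\end{equation*}
and that this condition genuinely characterises projectivity of $M(x)$ is exactly Proposition \ref{Prop_modcomb}(iii) transported along the covering: $F_{\ast}$ is exact and matches the indecomposable projectives of $\mathcal{A}_{\ell_{\infty}}^{(d)}$ with those of $A$, so $M(x) = F_{\ast}\widehat{M}(x)$ is projective if and only if $\widehat{M}(x)$ is, i.e.\ if and only if $x_1 = f(x_{d+1})$.

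For the main statement, assume $M(x) \notin \mathcal{P}$, equivalently $x_1 > f(x_{d+1})$ (recall $x_1 \geq f(x_{d+1})$ always holds for $x\in os_{\ell_{\infty}}^{d+1}$ by the length bound). Then Proposition \ref{Prop_modcomb}(v), applied to $\widehat{M}(x)$ and rewritten using $x_{d+1} - \ell_{x_{d+1}} - d + 1 = f(x_{d+1})$, furnishes an exact sequence
\begin{equation*}
    0 \to \widehat{M}(f(x_{d+1}), x_1, \ldots, x_d) \to \widehat{P}_d \to \cdots \to \widehat{P}_1 \to \widehat{M}(x) \to 0
\end{equation*}
with $\widehat{P}_i = \widehat{M}(f(x_{d+1}), x_1, \ldots, x_{i-1}, x_{i+1}, \ldots, x_{d+1})$ for $1 \le i \le d$. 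Each $\widehat{P}_i$ has first coordinate $f(x_{d+1})$ and last coordinate $x_{d+1}$, so it meets the criterion of Proposition \ref{Prop_modcomb}(iii); since $F_{\ast}$ preserves projectivity, $P_i := F_{\ast}\widehat{P}_i = M(f(x_{d+1}), x_1, \ldots, x_{i-1}, x_{i+1}, \ldots, x_{d+1})$ is projective. Applying the exact functor $F_{\ast}$ to the displayed sequence then yields the claimed exact sequence in $\modn A$ with all middle terms projective, and with kernel $F_{\ast}\widehat{M}(f(x_{d+1}), x_1, \ldots, x_d) = M(f(x_{d+1}), x_1, \ldots, x_d)$.

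It remains to identify this kernel with $\Omega^d M(x)$, and this is the step I expect to be the main obstacle: the syzygy is defined through projective covers, so the identification needs minimality rather than mere exactness. I would argue that $F_{\ast}$ preserves projective covers, and hence carries the minimal projective resolution of $\widehat{M}(x)$ (the sequence above, which is minimal in degrees $\le d$) to the minimal projective resolution of $M(x)$. The key point is that $F_{\ast}$ is exact and commutes with the radical, hence with the functor $\top$; a surjection $P \twoheadrightarrow N$ with $P$ projective is a projective cover precisely when it induces an isomorphism on tops, and this property is therefore preserved by $F_{\ast}$. Consequently $F_{\ast}\Omega = \Omega F_{\ast}$ on the modules occurring here, and by iteration $\Omega^d M(x) = F_{\ast}\Omega^d\widehat{M}(x) = M(f(x_{d+1}), x_1, \ldots, x_d)$, completing the proof. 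The only remaining care is to note that the displayed tuples lie in $os_{\ell_{\infty}}^{d+1}$ and are strictly increasing exactly because $x_1 > f(x_{d+1})$; both facts are already guaranteed by the upstairs statement.
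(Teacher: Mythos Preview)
Your proof is correct and follows essentially the same approach as the paper: reduce to $\mathcal{A}_{\ell_{\infty}}^{(d)}$ via Proposition \ref{Prop_modcomb}(iii) and (v), then push down along the exact functor $F_{\ast}$ (the paper justifies preservation of projectives by noting $F_{\ast}$ is left adjoint to the exact pull-up $F^{\ast}$). The one place where you are more careful than the paper is the identification of the kernel with $\Omega^d M(x)$: the paper simply writes $\Omega^d(M(x))$ after applying $F_{\ast}$ without comment, whereas you correctly observe that this requires $F_{\ast}$ to preserve minimality and supply an argument via preservation of tops; this fills a small gap that the paper leaves implicit.
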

\begin{proof}
    $x = (x_1, \ldots, x_{d+1})\in os_{\underline{\ell}}^{d+1}$ implies that $f(x_{d+1})\leq x_1$.
     As shown in section \ref{CT_Lift},
    $M(x) = F_{\ast}\widehat{M}(x)$ with $\widehat{M}(x)\in \mathcal{M}_{\ell_{\infty}}^{(d)}$. 
    Since $F_{\ast}$ is left adjoint to an exact functor, $F_{\ast}$ preserves projective modules.
    Thus $M(x)$ is projective if and only if $\widehat{M}(x)$ is projective if and only if $x_1 = f(x_{d+1})$ by Proposition \ref{Prop_modcomb} $(iii)$.

    In the case $f(x_{d+1}) < x_1$, we have 
    the beginning of the minimal projective resolution of $\widehat{M}(x)$ by Proposition \ref{Prop_modcomb} $(v)$
    \begin{equation*}
        \xymatrix@C=1em@R=1em{
            0\ar[r] & \Omega^d(\widehat{M}(x))\ar[r] & Q_{d}\ar[r] & \cdots\ar[r] & Q_{1}\ar[r] & \widehat{M}(x)\ar[r] & 0
        }
    \end{equation*}
    with $Q_{i} = \widehat{M}(f(x_{d+1}), x_1, \ldots, x_{i-1}, x_{i+1}, \ldots, x_{d+1})$ for $1\leq i\leq d$
    and
    $\Omega^d(\widehat{M}(x)) =$ \\$ \widehat{M}(f(x_{d+1}), x_1, \ldots, x_d)$.
    We shall apply $F_{\ast}$ which is exact and preserves projectivity to obtain the following exact sequence 
    \begin{equation*}
        \xymatrix@C=1em@R=1em{
        0\ar[r] & \Omega^d(M(x))\ar[r] & P_{d}\ar[r] & \cdots\ar[r] & P_{1}\ar[r] & M(x)\ar[r] & 0
        }
    \end{equation*}
    where $P_{i} = F_{\ast}Q_{i}$ for $1\leq i\leq d$.
    The claim follows.
\end{proof}

\begin{Eg}\label{egResQui}
    Let $n = 5$ and $\underline{\ell} = (3,4,4,4,4)$. When $d = 1$, we have that $\overline{f}(i) \equiv i - \ell_{i} \mod 5$. 
    Thus the resolution quiver $R({A_{\underline{\ell}}^{(1)}})$ is given by 
    \begin{equation*}
        \xymatrix{
           4\ar[r] & 5\ar[r] & 1\ar[r] & 3\ar@/^ 1pc/[lll] & 2\ar[l]
        }.
    \end{equation*}\\[4pt]
    When $d = 2$, $\overline{f}(i) \equiv i - \ell_{i} - 1 \mod 5$.
    Then we have the resolution quiver of $R({A_{\underline{\ell}}^{(2)}})$ as follows.
    \begin{equation*}
        \xymatrix{
           1\ar[r] & 2\ar@(r,d)[] & 3\ar@(r,d)[] & 4\ar@(r,d)[] & 5\ar@(r,d)[] 
        }
    \end{equation*}\\[6pt]
    When $d = 3$, $\overline{f}(i) \equiv i - \ell_{i} - 2 \mod 5$.
    Then we have the resolution quiver of $R({A_{\underline{\ell}}^{(3)}})$ as follows.
    \begin{equation*}
        \xymatrix{
          5\ar[r] & 4\ar[r] & 3\ar[r] & 2\ar[r] & 1\ar@(r,d)[]
        }
    \end{equation*}
\end{Eg}\medskip

We wish to capture homological information of singularity categories of higher Nakayama algebras using their resolution quivers. 
To do this, we explore more properties of the functions $f$ and $\overline{f}$.

Let $J = \{i \mid 1\leq i\leq n, \overline{f}^{N}(i) = i \text{ for some } N\in \mathbb{N}_{>0}\}$ and $I = J + n\mathbb{Z}$.
For $i\in \mathbb{Z}$, denote by $\overline{i}\in \{1, 2, \ldots, n\}$ the representative of $i$ such that $i = \overline{i} + n\mathbb{Z}$. 
Note that $i\in I$ if and only if $\overline{i}\in J$.
\begin{Prop}\label{Prop_f}
The following statements hold.
    \begin{itemize}
        \item [(i)] $\overline{f}|_{J}: J\rightarrow J$ is bijective.
        \item [(ii)] $f|_{I}: I\rightarrow I$ is a bijection. Moreover, there exist $s, t\in \mathbb{N}$ such that $f^s (i) = i+tn$ for all $i\in I$.
        \item [(iii)] There exists some $N\in \mathbb{N}$ such that for all $i\in \mathbb{Z}$, $f^{N}(i)\in I$.
    \end{itemize}
\end{Prop}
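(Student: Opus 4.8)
The plan is to reduce the whole proposition to the combinatorics of the functional graph $R(A)$, using only the two features of $f$ recorded in Remark \ref{f_periodic} --- the equivariance $f(i+n)=f(i)+n$ and the monotonicity of $f$ --- together with the elementary fact that $f(i)=i-\ell_i-d+1<i$ for every $i$, since $\ell_i+d-1\geq 1$. The bridge between $f$ and $\overline{f}$ is the congruence $\overline{f(i)}=\overline{f}(\,\overline{i}\,)$, valid because $f(i)\equiv \overline{i}-\ell_{\overline{i}}-d+1\pmod n$; I would invoke it repeatedly to transfer statements about $f$ on $I$ to statements about $\overline{f}$ on $J$ and back.

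For part $(i)$ I would argue inside the finite set $J$. First $\overline{f}(J)\subseteq J$, because if $\overline{f}^{\,N}(i)=i$ then $\overline{f}^{\,N}\bigl(\overline{f}(i)\bigr)=\overline{f}(i)$, so $\overline{f}(i)$ again lies on a cycle. Surjectivity of $\overline{f}|_J$ is immediate, since each $i\in J$ equals $\overline{f}\bigl(\overline{f}^{\,N-1}(i)\bigr)$ with $\overline{f}^{\,N-1}(i)\in J$; as $J$ is finite, surjectivity forces bijectivity.

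For part $(ii)$ I would first note $f(I)\subseteq I$, since for $i\in I$ we have $\overline{f(i)}=\overline{f}(\,\overline{i}\,)\in J$ by $(i)$. Injectivity of $f|_I$ follows from $(i)$: if $f(i)=f(j)$ then $\overline{f}(\,\overline{i}\,)=\overline{f}(\,\overline{j}\,)$, so $\overline{i}=\overline{j}$; writing $j=i+rn$ and using $f(j)=f(i)+rn$ gives $r=0$. Surjectivity follows by lifting: given $j\in I$, pick $\overline{i}\in J$ with $\overline{f}(\,\overline{i}\,)=\overline{j}$, take any lift $i_0$ of $\overline{i}$, observe $f(i_0)=j+cn$ for some $c$, and set $i:=i_0-cn\in I$, so that $f(i)=j$. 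Hence $f|_I$ is a bijection. Being non-decreasing and injective, $f|_I$ is strictly increasing; indexing $I=\{\dots<a_{-1}<a_0<a_1<\dots\}$ with $a_{k+m}=a_k+n$ where $m=|J|$, it is an order-automorphism, so $f(a_k)=a_{k+c_0}$ for a fixed $c_0<0$ (negative because $f(i)<i$). Now take $s$ to be the order of the permutation $\overline{f}|_J$, so that $\overline{f}^{\,s}=\mathrm{id}_J$ and therefore $n\mid\bigl(f^{\,s}(a_k)-a_k\bigr)=a_{k+sc_0}-a_k$ for all $k$. Writing $sc_0=pm+q$ with $0\leq q<m$, this difference is $(a_{k+q}-a_k)+pn$, and since $0<a_{k+q}-a_k<n$ when $0<q<m$, divisibility by $n$ forces $q=0$; hence $f^{\,s}(i)=i+pn$ with the constant $p=sc_0/m<0$. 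This yields the stated equality (with the multiple of $n$ being negative, i.e.\ the translation directed downward, consistent with $f(i)<i$).

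Part $(iii)$ is the assertion that every $\overline{f}$-trajectory reaches the cyclic part $J$ after boundedly many steps. Since the iterates $\overline{i},\overline{f}(\,\overline{i}\,),\dots,\overline{f}^{\,n}(\,\overline{i}\,)$ are $n+1$ elements of a set of size $n$, the trajectory must enter its cycle within $n$ steps, and once inside $J$ it stays there by $(i)$; thus $\overline{f}^{\,n}(\,\overline{i}\,)\in J$ for every $\overline{i}$. Taking $N=n$ and applying $\overline{f^{N}(i)}=\overline{f}^{\,N}(\,\overline{i}\,)\in J$ gives $f^{N}(i)\in I$ for all $i\in\mathbb{Z}$. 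I expect the only genuinely non-formal step to be the constancy of the shift in $(ii)$: passing from \emph{each residue class is translated by some multiple of $n$} to \emph{all classes are translated by the same multiple} is exactly where monotonicity, the period-$n$ symmetry, and the divisibility constraint $n\mid(f^{\,s}(i)-i)$ must be combined; everything else is bookkeeping on the finite functional graph $R(A)$.
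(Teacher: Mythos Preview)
Your argument is correct and follows essentially the same route as the paper. For $(i)$ and $(iii)$ the proofs are virtually identical; your pigeonhole gives the sharper explicit bound $N=n$, while the paper takes $N=N_1\cdots N_n$. For $(ii)$ the paper shows that $f|_I$ preserves adjacency and then compares adjacent shifts, whereas you compress this into the single observation that a strictly increasing bijection of a $\mathbb{Z}$-indexed set acts as an index translation $a_k\mapsto a_{k+c_0}$, followed by a divisibility argument to force $m\mid sc_0$. These are the same idea in different clothing; your packaging is arguably cleaner. Your remark that the resulting translation constant is necessarily negative (since $f(i)<i$) is correct and flags a harmless sign slip in the statement: later uses (e.g.\ Proposition~\ref{Prop_prepForWide}) only need that $f^s(i)-i$ is a multiple of $n$ independent of $i$, not its sign.
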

\begin{proof}
    (i) Since $\overline{f}(J) = J$ by definition and $J$ is a finite set,
    the statement follows.
    
    (ii) For $i\in I$, there is an $ N\in \mathbb{N}$ such that $\overline{f}^N(\overline{i}) = \overline{i}$.
    So we have $i = f^N(i) + pn = f^N(i+pn)$ for some $p\in \mathbb{Z}$.
    Thus the surjectivity of $f|_{I}$ follows.
    For injectivity, suppose $f(i) = f(j)$ with $i,j\in I$.
    Then $\overline{f}(\overline{i}) = \overline{f}(\overline{j})$ which implies that $j = i + qn$ for some $q\in \mathbb{Z}$.
    But $f(i) = f(j) = f(i+qn) = f(i) + qn$ forces $q = 0$.
    Therefore, $i = j$.
    
    If $|J| = 1$, then we take $s = t\in \mathbb{N}$.
    Otherwise, consider $i, j$ which are adjacent in $I$.
    Since $f$ is non-decreasing and bijective on $I$,
    $f(i), f(j)$ are also adjacent.
    By induction, $f^m(i), f^m(j)$ are adjacent for any $m\in \mathbb{N}$.
    
    As before, there are $M, N\in \mathbb{N}$ such that $f^M(i) = i + pn$ and $f^N(j) = j + qn$ for some $p, q\in \mathbb{Z}$. 
    Hence we can choose $s\in \mathbb{N}$ such that $f^s(i) = i + t_in$ and $f^s(j) = j + t_j n$ for some $t_i, t_j\in \mathbb{N}$.
    Since $f^s(i), f^s(j)$ are adjacent,
    we have $t_i = t_j$.
    Therefore, we can choose $s, t\in \mathbb{N}$ such that $f^s(i) = i + tn$ for all $i\in I$.
    
    (iii) Let $i\in \mathbb{Z}$.
    Then there is $ N_i\in \mathbb{N}$ such that $\overline{f}^{N_i}(\overline{i}) \in J$.
    So $f^{N_i}(\overline{i}) \in I$ and $f^{N_i}(i)\in I$.
    Take $N = N_1N_2\cdots N_n$. 
    Then $f^N(i)\in I$ for all $1\leq i\leq n$ and thus for all $i\in \mathbb{Z}$.
\end{proof}
Let $J' = \{1, 2, \ldots, n'\}$ with $n' = |J|$. Denote by $\iota: \mathbb{Z} \rightarrow I$ the unique order preserving bijection.
Let $\ell_{\infty}' = (\ldots, \ell'_{-1}, \ell'_0, \ldots)$ where  $\ell'_k = |[f(\iota(k)), \iota(k)] \cap I| - d$. The following proposition shows $\ell_{\infty}'$ is a series with constant values.
\begin{Prop}\label{Prop_NewKup}
    Notations are as above. There exists an integer $\ell'$ such that $\ell_k' = \ell'$ for all $k\in \mathbb{Z}$.
\end{Prop}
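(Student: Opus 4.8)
The plan is to reduce everything to the elementary fact that an order-preserving bijection of $\mathbb{Z}$ is a translation. First I would observe that $f$ restricts to a strictly increasing bijection of $I$: it is a bijection on $I$ by Proposition \ref{Prop_f}(ii), and being non-decreasing by Remark \ref{f_periodic}(ii) and injective on $I$, it is in fact strictly increasing there, hence order-preserving. I would also record how $\iota$ interacts with the period $n$. Since $I = J + n\mathbb{Z}$ with $J \subseteq \{1,\dots,n\}$, every window $[a, a+n)$ meets $I$ in exactly $n' = |J|$ integers, one for each residue in $J$; comparing the order-preserving enumeration $\iota$ with translation by $n$ then gives $\iota(k+n') = \iota(k) + n$ for all $k \in \mathbb{Z}$.

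Next I would transport $f|_I$ to the index set via $\iota$. Set $g = \iota^{-1}\circ f|_I \circ \iota \colon \mathbb{Z} \to \mathbb{Z}$. By the first step $g$ is an order-preserving bijection of $\mathbb{Z}$, and any such map is a translation, say $g(k) = k - m$ for a fixed integer $m$; equivalently $f(\iota(k)) = \iota(k-m)$. Moreover $f(i) = i - \ell_i - d + 1 < i$ because $\ell_i + d - 1 \geq 1$, so $g(k) < k$ and therefore $m \geq 1$.

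Finally I would count. Because $\iota$ is an order-preserving bijection onto $I$, the elements of $I$ lying in the integer interval $[\iota(k-m), \iota(k)]$ are exactly $\iota(k-m), \iota(k-m+1), \dots, \iota(k)$, so
\[
    \bigl|[f(\iota(k)),\iota(k)]\cap I\bigr| = \bigl|[\iota(k-m),\iota(k)]\cap I\bigr| = m+1 .
\]
Hence $\ell'_k = (m+1) - d$ for every $k$, and the constant value is $\ell' = m + 1 - d$. The only content-bearing inputs are that $f|_I$ is a bijection (already Proposition \ref{Prop_f}) together with the triviality that order-preserving bijections of $\mathbb{Z}$ are translations; the rest is bookkeeping with $\iota$. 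I expect the one delicate point to be the compatibility $\iota(k+n') = \iota(k) + n$, since it is precisely what converts ``$f|_I$ is an order automorphism of $I$'' into ``a genuine translation of the index set $\mathbb{Z}$'', thereby forcing the interval counts to be uniform in $k$.
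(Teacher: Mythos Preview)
Your proof is correct and takes a genuinely different route from the paper. The paper argues by squeezing: it first shows $\ell'_k \le \ell'_{k-1}+1$ from monotonicity of $f$, then upgrades this to $\ell'_k \le \ell'_{k-1}$ by using injectivity of $f|_I$ (otherwise two consecutive values $\iota(k-1),\iota(k)$ would have the same $f$-image), and finally invokes $n'$-periodicity $\ell'_{k-n'}=\ell'_k$ to force all inequalities to be equalities. Your argument instead conjugates $f|_I$ through $\iota$ to obtain an order-preserving bijection $g$ of $\mathbb{Z}$, observes that any such map is a translation $k\mapsto k-m$, and reads off $\ell'_k = m+1-d$ directly. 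This is cleaner and more conceptual; it also identifies $g$ with the map $f'$ appearing in Remark~\ref{f'}, so the constant $\ell'$ comes packaged with its meaning $\ell' = k - f'(k) - d + 1$.

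One small comment: the compatibility $\iota(k+n')=\iota(k)+n$ that you flag as the ``delicate point'' is in fact unnecessary for your argument. Once you know $g=\iota^{-1}\circ f|_I\circ\iota$ is an order-preserving bijection of $\mathbb{Z}$, it is automatically a translation (since $g(k+1)$ must be the successor of $g(k)$), with no appeal to periodicity. By contrast, the paper's squeezing argument \emph{does} need the periodicity $\ell'_{k-n'}=\ell'_k$ to close the chain of inequalities, and that is where the relation $\iota(k+n')=\iota(k)+n$ (together with $f(i+n)=f(i)+n$) is implicitly used. So your approach actually bypasses the one step you were worried about.
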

\begin{proof}
    Firstly we show that $\ell_k' \leq \ell_{k-1}' + 1$ for all $k\in \mathbb{Z}$.
    Since $f$ is non-decreasing, we have the following inequalities
    \begin{equation*}\begin{split}
        \ell_k' & = |[f(\iota(k)), \iota(k)] \cap I| - d \\
        & \leq |[f(\iota(k-1)), \iota(k)] \cap I| -d 
        \\ & = |[f(\iota(k-1)), \iota(k-1)] \cap I| - d + 1 \\
        & = \ell_{k-1}' +1. \end{split}
    \end{equation*}
    We claim that $\ell_k' \leq \ell_{k-1}'$. 
    Otherwise, $\ell_k' = \ell_{k-1}'+1$.
    Then 
    \begin{equation*}
        |[f(\iota(k)), \iota(k-1)]\cap I| + 1 = |[f(\iota(k)), \iota(k)]\cap I| = |[f(\iota(k-1)), \iota(k-1)]\cap I|+1.
    \end{equation*}
    Since $f$ is bijective on $I$, this implies $f(\iota(k-1)) = f(\iota(k))$ and thus $k = k-1$ which is impossible. 
    Therefore, 
    \begin{equation*}
        \ell_k' \leq \ell_{k-1}' \leq \cdots \leq \ell_{k-n'}' = \ell_k'
    \end{equation*}
    which implies that $\ell_k' = \ell_{k-1}'$ for all $k\in \mathbb{Z}$.
\end{proof}

\begin{Eg}
    Let $n = 5$, $d = 2$ and $\underline{\ell} = (3,4,4,4,4)$. 
    By Example \ref{egResQui}, 
    we have that $J = \{2, 3, 4, 5\}$ and $I = J + 5\mathbb{Z}$.
    Recall that $f(i) = i - \ell_{i} - 1$.
    Then 
    \begin{equation*}
        f(1) = -3, f(2) = -3, f(3) = -2, f(4) = -1, f(5) = 0.
    \end{equation*}
    Following our construction,
    $J' = \{1, 2, 3, 4\}$ and 
    \begin{equation*}
        \iota(1) = 2, \iota(2) = 3, \iota(3) = 4, \iota(4) = 5.
    \end{equation*}
    Hence 
    \begin{align*}
        \ell'_1 & = |[f(\iota(1)), \iota(1)] \cap I| -2 = |[-3, 2] \cap I| - 2 = 3,\\
        \ell'_2 & = |[f(\iota(2)), \iota(2)] \cap I| -2 = |[-2, 3] \cap I| - 2 = 3,\\
        \ell'_3 & = |[f(\iota(3)), \iota(3)] \cap I| -2 = |[-1, 4] \cap I| - 2 = 3,\\
        \ell'_4 & = |[f(\iota(4)), \iota(4)] \cap I| -2 = |[0, 5] \cap I| - 2 = 3.
    \end{align*}
    We have that $\ell'_{\infty} = (\ldots, 3, 3, 3, \ldots)$.
\end{Eg}

\begin{Rem}\label{f'}
    The function $\iota: \mathbb{Z} \rightarrow I$ is actually a relabelling of $I$. 
    We have the following commutative diagram
    \begin{equation*}
        \xymatrix{
            \mathbb{Z}\ar[r]^{\iota}\ar[d]_{f'} & I\ar[d]^{f|_I} \\
            \mathbb{Z}\ar[r]^{\iota} & I,
        }
    \end{equation*}
    where $f' = \iota^{-1}\circ f|_I\circ \iota$. 
    Since $f|_I: I \rightarrow I$ is bijective we have that $f'$ is bijective.
    Moreover, 
    \begin{equation*}
        \ell'_i = i - f'(i)  - d + 1\text{ for } i\in \mathbb{Z}.
    \end{equation*}
    As can be seen from the above formula, we obtained $\ell'_{\infty}$ by forcing $f'$ to play the role of $f|_I$.
    Observe that when $\ell' \geq 2$, $\ell_{\infty}'$ is a connected Kupisch series with constant values.
    The case $\ell'\leq 1$ is addressed below.
\end{Rem}

We extend $f$ to $os_{\ell_{\infty}}^{d+1}\cup \{0\}$ in the following way.
For $x = (x_1, \ldots, x_{d+1})\in os_{\ell_{\infty}}^{d+1}$,
define $f(x) = (f(x_1), \ldots, f(x_{d+1}))$ if $f(x_1) < f(x_2) < \cdots f(x_{d+1}) < x_1$ and $f(x) = 0$ otherwise. 
Further define $f(0) = 0$.
Note that if $f(x) \neq 0$, then $f(x)\in os_{\ell_{\infty}}^{d+1}$ since $f(x_{d+1}) < x_1$ implies $f^2(x_{d+1}) \leq f(x_1)$.

\begin{Lem}\label{lem_syzygy_f}
Let $x\in os_{\ell_{\infty}}^{d+1}$.
Then $\projdim M(x) \leq d^2$ if $f(x) = 0$.
Otherwise $\Omega^{d(d+1)}M(x) = M(f(x))$.
In particular, if $f^N(x) = 0$ for some $N\geq 1$ then $\projdim M(x) < \infty$.
\end{Lem}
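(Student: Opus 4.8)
The plan is to iterate the single-step syzygy formula of Proposition \ref{prop_f_detects_syzygyorbit}. For a non-projective module $M(z)$ in $\mathcal{M}_{\underline{\ell}}^{(d)}$ that proposition gives
\[
\Omega^d M(z) = M(f(z_{d+1}), z_1, \ldots, z_d),
\]
so one pass of $\Omega^d$ prepends $f$ of the top coordinate and deletes the bottom one. Setting $z^{(0)} = x$ and $z^{(k)} = (f(z^{(k-1)}_{d+1}), z^{(k-1)}_1, \ldots, z^{(k-1)}_d)$, a short induction shows that, as long as $M(z^{(0)}), \ldots, M(z^{(k-1)})$ are all non-projective,
\[
\Omega^{dk} M(x) = M(z^{(k)}), \qquad z^{(k)} = (f(x_{d-k+2}), \ldots, f(x_{d+1}), x_1, \ldots, x_{d-k+1}),
\]
for $1 \le k \le d+1$. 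In particular after $d+1$ rounds every original coordinate has been replaced by its image and $z^{(d+1)} = (f(x_1), \ldots, f(x_{d+1}))$.

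The key combinatorial step I would isolate is the dictionary between projectivity of the intermediate syzygies and collisions of the values of $f$. By Proposition \ref{prop_f_detects_syzygyorbit}, $M(z^{(k)})$ is projective iff $z^{(k)}_1 = f(z^{(k)}_{d+1})$; plugging in the explicit form of $z^{(k)}$ for $1 \le k \le d$ this becomes $f(x_{d-k+1}) = f(x_{d-k+2})$. Thus the iteration hits a projective module at step $k$ exactly when the consecutive coordinates $x_{d-k+1} < x_{d-k+2}$ share the same $f$-value. Since $f$ is non-decreasing and $f(x_{d+1}) \le x_1$ for $x \in os_{\ell_{\infty}}^{d+1}$, the condition $f(x) = 0$ is equivalent to saying that either $M(x)$ is projective or such a collision occurs.

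With this in hand the two cases of the lemma fall out. If $f(x) \ne 0$ there are no collisions and $M(x)$ is non-projective, so the closed form for $z^{(k)}$ is valid through $k = d+1$ and gives $\Omega^{d(d+1)} M(x) = M(z^{(d+1)}) = M(f(x))$. If $f(x) = 0$ and $M(x)$ is non-projective, let $k_0 \le d$ be the first step at which a collision occurs; then $\Omega^{dk_0} M(x) = M(z^{(k_0)})$ is projective, so the minimal projective resolution of $M(x)$ stops and $\projdim M(x) \le dk_0 \le d^2$ (the remaining case, $M(x)$ projective, gives $\projdim M(x) = 0$). For the final assertion I would take the least $N_0$ with $f^{N_0}(x) = 0$, apply the main identity $N_0 - 1$ times to obtain $\Omega^{d(d+1)(N_0-1)} M(x) = M(f^{N_0-1}(x))$, and then bound $\projdim M(f^{N_0-1}(x)) \le d^2$ by the case just treated, so that $\projdim M(x) \le d(d+1)(N_0-1) + d^2 < \infty$.

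I expect the only genuine obstacle to be the index bookkeeping: one must keep the boundary between the already transformed coordinates $f(x_{d-k+2}), \ldots, f(x_{d+1})$ and the untouched coordinates $x_1, \ldots, x_{d-k+1}$ correct, and check at each intermediate step that the non-projectivity hypothesis of Proposition \ref{prop_f_detects_syzygyorbit} really holds, so that both the closed form for $z^{(k)}$ and the identification of the first projective syzygy are justified.
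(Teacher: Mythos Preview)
Your proof is correct and follows essentially the same approach as the paper: both iterate Proposition \ref{prop_f_detects_syzygyorbit} and observe that either a projective syzygy is reached within $d$ iterations (corresponding to $f(x)=0$) or one arrives at $M(f(x))$ after $d+1$ iterations. Your version is simply more explicit than the paper's, which compresses the closed form for $z^{(k)}$, the identification of projectivity with a collision $f(x_{d-k+1})=f(x_{d-k+2})$, and the equivalence with $f(x)=0$ into the single sentence ``iterating this process, we find either that some $\Omega^{di}M(x)$ is projective where $1\leq i\leq d$ and $f(x)=0$, or $f(x)\neq 0$ and $\Omega^{d(d+1)}M(x)=M(f(x))$.''
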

\begin{proof}
    If $f(x_{d+1}) = x_1$, then $M(x)$ is projective. Thus $f(x) = 0$ and the statement holds.
    If $f(x_{d+1}) < x_1$, then $\Omega^dM(x) = M(f(x_{d+1}), x_1, \ldots, x_d)$ by Proposition \ref{prop_f_detects_syzygyorbit}. 
    Iterating this process, we find either that some $\Omega^{di}M(x)$ is projective where $1\leq i\leq d$ and $f(x) = 0$ or $f(x)\neq 0$ and $\Omega^{d(d+1)}M(x) = M(f(x))$.
    By induction, the second claim follows.
\end{proof}

\begin{Prop}
    If $\ell' \leq 1$, then $D_{sg}(A) = 0$.
\end{Prop}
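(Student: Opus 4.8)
The plan is to deduce that $A$ has finite global dimension, which by the Buchweitz characterization recalled in the introduction is equivalent to $D_{sg}(A) = 0$. Since $A$ is finite dimensional, $\gldim A < \infty$ is the same as saying that every module has finite projective dimension, and by Proposition \ref{CTApprox} every $X \in \modn A$ admits a finite exact right $\mathcal{M}_{\underline{\ell}}^{(d)}$-resolution $0 \to M_d \to \cdots \to M_1 \to X \to 0$. Splicing projective resolutions along such a sequence gives $\projdim X \leq (d-1) + \max_i \projdim M_i$, so it suffices to prove that each indecomposable $M(x)$ with $x \in os_{\ell_{\infty}}^{d+1}$ has finite projective dimension (there being only finitely many up to isomorphism, as $M(x) \cong M(\sigma(x))$). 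By the last assertion of Lemma \ref{lem_syzygy_f}, this holds whenever $f^N(x) = 0$ for some $N \geq 1$, so the entire statement reduces to the combinatorial claim that, for every $x \in os_{\ell_{\infty}}^{d+1}$, the iterate $f^N(x)$ eventually becomes $0$.

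First I would invoke Proposition \ref{Prop_f}(iii) to fix $N_0 \in \mathbb{N}$ with $f^{N_0}(i) \in I$ for all $i \in \mathbb{Z}$. If $f^{N_0}(x) = 0$ we are finished; otherwise all intermediate iterates are nonzero, so by the definition of the extended map $f$ on tuples we have $f^{N_0}(x) = (f^{N_0}(x_1), \ldots, f^{N_0}(x_{d+1}))$ with every coordinate now lying in $I$. Write $y = f^{N_0}(x)$, so $y \in os_{\ell_{\infty}}^{d+1}$ is a strictly increasing tuple $y_1 < \cdots < y_{d+1}$ whose entries all belong to $I$.

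The heart of the argument, and the step I expect to be the main obstacle, is a counting estimate exploiting $\ell' \leq 1$. The defining inequality of $os_{\ell_{\infty}}^{d+1}$ yields $y_1 \geq f(y_{d+1})$, so $y_1, \ldots, y_{d+1}$ are $d+1$ distinct elements of $I \cap [f(y_{d+1}), y_{d+1}]$. Writing $y_{d+1} = \iota(k)$, the definition $\ell'_k = |[f(\iota(k)), \iota(k)] \cap I| - d$ together with Proposition \ref{Prop_NewKup} gives $|I \cap [f(y_{d+1}), y_{d+1}]| = \ell' + d \leq d+1$. Hence this set cannot contain $d+1$ distinct elements unless $\ell' = 1$, in which case $\{y_1, \ldots, y_{d+1}\} = I \cap [f(y_{d+1}), y_{d+1}]$; since $f(y_{d+1}) \in I$ is the minimum of this interval, we get $y_1 = f(y_{d+1})$ (and if $\ell' < 1$ no such nonzero $y$ exists, forcing $f^{N_0}(x) = 0$). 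Thus $M(y)$ is projective by Proposition \ref{prop_f_detects_syzygyorbit}, so $f(y) = f^{N_0 + 1}(x) = 0$. In all cases $f^N(x) = 0$ for some $N$, giving $\projdim M(x) < \infty$, and therefore $\gldim A < \infty$ and $D_{sg}(A) = 0$.
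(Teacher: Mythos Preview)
Your proof is correct and follows essentially the same route as the paper: both reduce to showing that $f^N(x)=0$ eventually for every $x\in os_{\ell_\infty}^{d+1}$, first pushing the coordinates into $I$ via Proposition~\ref{Prop_f}(iii) and then using the counting identity $|[f(\iota(k)),\iota(k)]\cap I|=\ell'+d$ to force either a contradiction ($\ell'\leq 0$) or projectivity ($\ell'=1$), before invoking Lemma~\ref{lem_syzygy_f}. The only cosmetic difference is in the final step---you bound $\gldim A$ via the finite $\mathcal{M}$-resolution of an arbitrary module, while the paper phrases it as $D^b(\modn A)=\tri(\mathcal{M}_{\underline{\ell}}^{(d)})\subset K^b(\proj A)$---but these are equivalent.
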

\begin{proof}
    We claim that for all $x\in os_{\ell_{\infty}}^{d+1}$, $f^s(x) = 0$ for some $s \gg 0$.
    By Proposition \ref{Prop_f} $(iii)$, 
    there exists $N \gg 0$ such that $f^N(x_i)\in I$ for all $1\leq i\leq d+1$.
    In the case $\ell' \leq 0$, we have $f^{N+1}(x) = 0$. If not, then $(f^{N+1}(x_{d+1}), f^N(x_1), \ldots, f^N(x_d))\in os_{\ell_{\infty}}^{d+1}$ which is impossible.
    When $\ell' = 1$, we have $f^{N+1}(x_{d+1}) = f^N(x_1)$. 
    In other words, $M(f^N(x))$ is projective thus $f^{N+1}(x) = 0$.

    In both cases, $\projdim M(x) < \infty$ by Lemma \ref{lem_syzygy_f}.
    Thus $M(x)\in K^b(\proj A)$.
    As we have seen in the proof of Theorem \ref{Thm_main} that $D^b(\modn A) = \tri (\mathcal{M}_{\ell_{\infty}}^{(d)})$.
    Therefore, $D^b(\modn A) = K^b(\proj A)$ and $D_{sg}(A) = 0$.
\end{proof}

\begin{Rem}
    From now on we assume $\ell' \geq 2$.
    Then $\ell'_{\infty}$ is a connected Kupisch series with constant values.
\end{Rem}

\subsection{Singularity category of $A_{\underline{\ell}}^{(d)}$}
In this section, we will define a self-injective higher Nakayama algebra $A'$ by the Kupisch series obtained from section \ref{section_resqui}.
By identifying $A'$ with an idempotent subalgebra $B$ of $A$ and applying Theorem \ref{Thm_main},
we obtain a singular equivalence between $A$ and $B$.
Therefore the singularity category of a $d$-Nakayama algebra is triangulated equivalent to the stable module category of a selfinjective $d$-Nakayama algebra.

Let $A' = A_{n', \ell'}^{(d)}$ be the selfinjective $d$-Nakayama algebra of type $\widetilde{\mathbb{A}}_{n'-1}$ with Kupisch series $(\ell', \ldots, \ell')$ and 
\begin{equation*}
    \mathcal{M}_{n', \ell'}^{(d)} = F_{\ast}\mathcal{M}_{\ell_{\infty}'}^{(d)} = \add \{M(x) = F_{\ast}\widehat{M}(x) \mid x\in os_{\ell_{\infty}'}^{d+1}\}
\end{equation*}
be the distinguished $d\mathbb{Z}$-cluster tilting subcategory of $\modn A'$.

Let 
\begin{equation*}
    os_I^{d+1} = \{x = (x_1, \ldots, x_{d+1})\in os_{\ell_{\infty}}^{d+1} \mid  x_i\in I \text{ for all } 1\leq i\leq d+1\}
\end{equation*}
be the set with ordered squences whose coordinates are in $I$.
Let 
\begin{equation*}
    \mathcal{W} = \add \{M(x) \in \mathcal{M}_{\underline{\ell}}^{(d)} \mid x\in os_I^{d+1}\} \subset \mathcal{M}_{\underline{\ell}}^{(d)}
\end{equation*} 
and $\mathcal{P}_I = \mathcal{W} \cap \mathcal{P}$, i.e. the full subcategory of projective objects in $\mathcal{W}$.

Recall that $\iota: \mathbb{Z}\rightarrow I$ is the unique order-preserving bijection.
Now we extend $\iota$ to a bijection preserving the relation $\preccurlyeq$ as follows
\begin{equation*}
    \iota: os_{\ell'_{\infty}}^{d+1}\rightarrow os_I^{d+1}
\end{equation*}
where $\iota(x) = (\iota(x_1), \ldots, \iota(x_{d+1}))$ for $x = (x_1, \ldots, x_{d+1})\in os_{\ell'_{\infty}}^{d+1}$.

We define a $k$-linear functor induced by $\iota$ as follows.
\begin{align*}
    \iota^{\ast}: \mathcal{M}_{n', \ell'}^{(d)} & \rightarrow \mathcal{W}\\
    M(x) & \mapsto M(\iota(x)) \\
    [M(x)\xrightarrow[]{f_{yx}^i} M(y)] & \mapsto [ M(\iota(x))\xrightarrow[]{f_{\iota(y)\iota(x)}^i} M(\iota(y))].
\end{align*}
Observe that $\iota^{\ast}(Id_{M(x)}) = \iota^{\ast}(f_{xx}^0) = f_{\iota(x)\iota(x)}^0 = Id_{\iota^{\ast}(M(x))}$ and $\iota^{\ast}(f_{zy}^jf_{yx}^i) = \iota^{\ast}(f_{zy}^j)\iota^{\ast}(f_{yx}^i)$ since $\iota$ preserves the relation $\preccurlyeq$.
Moreover $\iota^{\ast}$ is fully faithful since it maps a $k$-basis of $\Hom_{A'}(M(x), M(y))$ to a $k$-basis of $\Hom_{A}(M(\iota(x)), M(\iota(y)))$. 
For an indecomposable object $M(z)\in \mathcal{W}$, 
we have that $\iota^{-1}(z)\in os_{\ell_{\infty}'}^{d+1}$ since $\iota$ is bijective. 
Thus $M(\iota^{-1}(z))\in \mathcal{M}_{n', \ell'}^{(d)}$ and $\iota^{\ast}(M(\iota^{-1}(z))) = M(z)$.
This implies that $\iota^{\ast}$ is dense.
Therefore, $\iota^{\ast}$ is a $k$-linear equivalence.

\begin{Prop}\label{iotastarisequiv}
     We have a $k$-linear equivalence $\iota^{\ast}: \mathcal{M}_{n',\ell'}^{(d)} \rightarrow \mathcal{W}$.
     Moreover, when restricted to projective objects,
     we obtain the equivalence 
     $\iota^{\ast}: \add A' \rightarrow \add P$ where $P$ is a basic additive generator of $\mathcal{P}_I$.
\end{Prop}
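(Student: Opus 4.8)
The first assertion has already been verified in the discussion immediately preceding the statement, where $\iota^{\ast}$ was shown to be full, faithful and dense; so the plan is to concentrate on the \emph{moreover} part, namely that $\iota^{\ast}$ carries the projective objects of $\mathcal{M}_{n',\ell'}^{(d)}$ bijectively onto those of $\mathcal{W}$. Since $A'$ is self-injective, the projective objects of $\mathcal{M}_{n',\ell'}^{(d)}$ are precisely the indecomposables in $\add A'$ (projectives and injectives coincide and lie in the $d\mathbb{Z}$-cluster tilting subcategory), while the projective objects of $\mathcal{W}$ constitute $\mathcal{P}_I = \mathcal{W}\cap\mathcal{P}$ by definition. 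As $\iota^{\ast}$ is already a $k$-linear equivalence, it will suffice to show that an indecomposable $M(x)\in\mathcal{M}_{n',\ell'}^{(d)}$ is projective if and only if its image $M(\iota(x))$ lies in $\mathcal{P}_I$: once projectivity is matched on indecomposables, the restriction of the equivalence to these objects is again full, faithful and dense, hence an equivalence $\add A'\to\mathcal{P}_I$.

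First I would record the combinatorial criterion for projectivity on each side. By Proposition \ref{Prop_modcomb}$(iii)$, equivalently Proposition \ref{prop_f_detects_syzygyorbit}, an indecomposable $M(x)$ in $\mathcal{M}_{n',\ell'}^{(d)}$ is projective if and only if $x_1 = f'(x_{d+1})$, where $f'(i) = i - \ell' - d + 1$ is the map attached to the constant Kupisch series $(\ell',\ldots,\ell')$. Dually, an indecomposable $M(z)\in\mathcal{W}$ lies in $\mathcal{P}_I$ if and only if $z\in os_I^{d+1}$ and $z_1 = f(z_{d+1})$, with $f$ the map attached to $\underline{\ell}$.

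The crux is then to match these two conditions through $\iota$, and this is exactly what the commutative square of Remark \ref{f'} supplies: $f|_I\circ\iota = \iota\circ f'$, i.e. $f(\iota(i)) = \iota(f'(i))$ for all $i\in\mathbb{Z}$. Writing $z = \iota(x)$, the condition $z_1 = f(z_{d+1})$ reads $\iota(x_1) = f(\iota(x_{d+1})) = \iota(f'(x_{d+1}))$, which by injectivity of $\iota$ is equivalent to $x_1 = f'(x_{d+1})$. Hence $\iota^{\ast}$ both sends projectives into $\mathcal{P}_I$ and reflects projectivity, so it restricts to the desired equivalence $\add A'\xrightarrow{\sim}\mathcal{P}_I$. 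Finally, since $\mathcal{M}_{\underline{\ell}}^{(d)}$ has only finitely many isomorphism classes of indecomposables (the sequences $x$ taken up to the $G$-action $x\mapsto\sigma(x)$), the same holds for $\mathcal{P}_I$, which therefore admits a basic additive generator $P$ with $\mathcal{P}_I = \add P$, yielding $\iota^{\ast}\colon\add A'\to\add P$.

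I expect the only genuine subtlety to be bookkeeping: ensuring that the correct map, $f'$ on the $A'$-side versus $f|_I$ on the $\mathcal{W}$-side, is invoked in each projectivity test, and that self-injectivity of $A'$ is what legitimizes identifying $\add A'$ with the full subcategory of projective objects inside $\mathcal{M}_{n',\ell'}^{(d)}$. All of the homological content is compressed into the intertwining relation $f\circ\iota = \iota\circ f'$ of Remark \ref{f'}; the remainder is a faithful translation of projectivity into the combinatorics of ordered sequences.
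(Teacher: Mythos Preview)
Your proposal is correct and follows essentially the same route as the paper: both arguments reduce the ``moreover'' part to matching the projectivity criterion $x_1 = f'(x_{d+1})$ on the $A'$-side with $z_1 = f(z_{d+1})$ on the $\mathcal{W}$-side via the intertwining relation $f|_I\circ\iota = \iota\circ f'$ from Remark~\ref{f'}. Your write-up is slightly more detailed (e.g.\ justifying that $\mathcal{P}_I$ has a basic additive generator), but the core idea is identical.
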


\begin{proof}
    By Proposition \ref{prop_f_detects_syzygyorbit}, 
    $M(x) \in \mathcal{M}_{n',\ell'}^{(d)}$ is projective if and only if $x_1 = f'(x_{d+1})$.
    Since $f' = \iota^{-1}\circ f|I\circ \iota$ by Remark \ref{f'}, 
    this is equivalent to $\iota(x_1) = f\circ \iota(x_{d+1})$, 
    which is fulfilled if and only if $\iota^{\ast}(M(x))$ is projective.
    Therefore, $\iota^{\ast}$ restricts to an equivalence $\add A'\xrightarrow{\sim} \add P$.
\end{proof}

We denote by $B = \End_A(P)$ the endomorphism algebra of $P$.
Since $P$ is a basic projective $A$-module, there exists an idempotent $e\in A$ such that $P = eA$ and $B = eAe$.
In other words, $B$ is an idempotent subalgebra of $A$.

We have the canonical functor 
\begin{equation*}
    i_{\lambda} = -\otimes_B P : \modn B \rightarrow \modn A, N\mapsto N \otimes_B P,
\end{equation*}
which admits an exact right adjoint functor
\begin{equation*}
    i_{\rho} = \Hom_A(P, -):
    \modn A\rightarrow \modn B,
    M\mapsto Me.
\end{equation*}
Since $i_{\lambda}(B) = P$ and $i_{\rho}(P) = B$,
$i_{\lambda}, i_{\rho}$ restrict to an additive equivalence
\begin{equation*}
    \xymatrix{
      \add P \ar@/^/[rr]^{i_{\rho}} && \add B\ar@/^/[ll]^{i_{\lambda}}
    }.
\end{equation*}

\begin{Prop}\label{A'IsoB}
    We have that $B \cong A'$. That is, $B$ is a self-injective $d$-Nakyama algebra. 
\end{Prop}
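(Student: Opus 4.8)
The plan is to read off $B\cong A'$ directly from the equivalence produced in Proposition \ref{iotastarisequiv}, using nothing more than the general principle that an equivalence of additive $k$-categories induces $k$-algebra isomorphisms between the endomorphism algebras of corresponding objects. The real content has already been established; what remains is bookkeeping.

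First I would recall that $A'$ is a basic finite-dimensional $k$-algebra, so its right regular representation $A'_{A'}$ is a basic additive generator of $\add A'=\proj A'$, and left multiplication furnishes a canonical (covariant, no opposite algebra) $k$-algebra isomorphism $\End_{A'}(A'_{A'})\cong A'$. The indecomposable projective objects of $\mathcal{M}_{n',\ell'}^{(d)}$ are exactly the indecomposable summands of $A'_{A'}$, so $A'_{A'}$ remains a basic additive generator of the subcategory $\add A'$ of projectives inside $\mathcal{M}_{n',\ell'}^{(d)}$.

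Next, by Proposition \ref{iotastarisequiv} the functor $\iota^{\ast}$ restricts to a $k$-linear equivalence $\add A'\xrightarrow{\sim}\add P$, where $\add P=\mathcal{P}_I$. Since an equivalence of additive categories carries basic additive generators to basic additive generators, $\iota^{\ast}(A'_{A'})$ is a basic additive generator of $\mathcal{P}_I$ and hence isomorphic to $P$. Because $\iota^{\ast}$ is fully faithful, it induces a $k$-algebra isomorphism
\begin{equation*}
    A'\cong\End_{A'}(A'_{A'})\cong\End_A\bigl(\iota^{\ast}(A'_{A'})\bigr)\cong\End_A(P)=B .
\end{equation*}
As $A'=A_{n',\ell'}^{(d)}$ is self-injective $d$-Nakayama by construction, so is $B$, which is the assertion.

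The only points requiring care are routine rather than substantive: one must check that the construction of $\iota^{\ast}$ genuinely matches $A'_{A'}$ with $P$ up to isomorphism (which holds because each is the sum of all indecomposable projectives in its category, with every isomorphism class appearing exactly once), and that the induced isomorphism of endomorphism algebras is covariant, so that $B$ is isomorphic to $A'$ itself rather than to $A'^{\mathrm{op}}$. Both follow from the explicit definition of $\iota^{\ast}$ together with the standard identification $\End_A(eA)\cong eAe$, so no genuine obstacle arises.
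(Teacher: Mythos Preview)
Your proof is correct and takes essentially the same approach as the paper: both arguments use that $\iota^{\ast}$ gives a fully faithful $k$-linear functor matching the basic additive generator $A'$ with $P$, and then read off an isomorphism of endomorphism algebras. The paper additionally composes with $i_{\rho}=\Hom_A(P,-)$ to land in $\add B$ and computes $\End_B(\Hom_A(P,P))=B$, whereas you stop at $\End_A(P)=B$ directly; this is a minor streamlining rather than a genuinely different route.
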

\begin{proof}
   Write $A' = \bigoplus_{x\in X} M(x)$ where $X$ is the set of indices of indecomposable $A'$-projective modules.
   Consider the equivalence $i_{\rho}\circ \iota^{\ast}: \add A' \rightarrow \add B, M(x)\mapsto \Hom_A(P, M(\iota(x)))$.
   We have that 
   \begin{align*}
       A' & = \End_{A'}(A')\\
       & \cong \bigoplus_{x, y\in X}\Hom_{A'}(M(x), M(y))\\
       & \cong \bigoplus_{x, y\in X}\Hom_{A}(M(\iota(x)), M(\iota(y)))\\
       & \cong \bigoplus_{x, y\in X}\Hom_{B}(\Hom_A(P, M(\iota(x))),\Hom_A(P, M(\iota(y))))\\
       & \cong \Hom_{B}(\Hom_A(P, \bigoplus_{x\in X}M(\iota(x))),\Hom_A(P, \bigoplus_{y\in X}M(\iota(y))))\\
       & \cong \End_{B}(\Hom_A(P, P))\\
       & = B.
   \end{align*}
\end{proof}

We use the above isomorphism to identify $B$ with $A'$.
In the same way we identify $\add B = \add A'$ and the distinguished $d\mathbb{Z}$-cluster tilting subcategory of $B$ with $\mathcal{M}_{n', \ell'}^{(d)}$.

\begin{Prop}\label{Prop_prepForWide}
    For a nonprojective module $M(x)\in \mathcal{W}$,
        there is a positive integer $s$ and an exact sequence 
            \begin{equation*}
                \xymatrix@C=1em@R=1em{
            (\ast) ~~~   0\ar[r] & M(x)\ar[r] & P_{sd(d+1)}\ar[r] & \cdots\ar[r] & P_{1}\ar[r] & M(x)\ar[r] & 0
                }
            \end{equation*}
            with $P_{i}\in \mathcal{P}_I$.
\end{Prop}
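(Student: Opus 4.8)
The plan is to realize $(\ast)$ as a periodic segment of the minimal projective resolution of $M(x)$, obtained by splicing together the short resolutions of Proposition \ref{prop_f_detects_syzygyorbit} and then closing the resulting complex up using the periodicity of $f$ on $I$.

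First I would verify that the hypotheses needed to iterate the syzygy formula persist throughout $\mathcal{W}$. Since $x\in os_I^{d+1}$ and $f|_I$ is a bijection by Proposition \ref{Prop_f}(ii) which is non-decreasing by Remark \ref{f_periodic}(ii), it is in fact strictly increasing on $I$; hence $f(x_1) < \cdots < f(x_{d+1})$, and because $M(x)$ is nonprojective we have $f(x_{d+1}) < x_1$. Thus $f(x)\neq 0$ and $f(x)\in os_I^{d+1}$, so $M(f(x))\in\mathcal{W}$. Applying the strictly increasing map $f^j|_I$ to the inequality $x_1 > f(x_{d+1})$ gives $f^j(x_1) > f^{j+1}(x_{d+1})$, which shows that $M(f^j(x))$ is nonprojective for every $j\geq 0$. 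In particular, the formula $\Omega^d M(y) = M(f(y_{d+1}), y_1, \ldots, y_d)$ of Proposition \ref{prop_f_detects_syzygyorbit} applies at every stage.

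Next I would iterate Proposition \ref{prop_f_detects_syzygyorbit}. A single application replaces $M(y)$ by $\Omega^d M(y)$ through an exact sequence whose $d$ middle terms $P_i = M(f(y_{d+1}), y_1, \ldots, y_{i-1}, y_{i+1}, \ldots, y_{d+1})$ are projective; since $y\in os_I^{d+1}$ and $f(y_{d+1})\in I$, each such $P_i$ has all coordinates in $I$ and hence lies in $\mathcal{P}_I$. As in the proof of Lemma \ref{lem_syzygy_f}, performing $d+1$ consecutive applications carries $M(x)$ to $\Omega^{d(d+1)} M(x) = M(f(x))$, and splicing the corresponding $d(d+1)$ short exact sequences yields an exact complex with all middle terms in $\mathcal{P}_I$ and with left-hand kernel $M(f(x))\in\mathcal{W}$.

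Finally I would close the resolution up using Proposition \ref{Prop_f}(ii), which furnishes $s, t\in\mathbb{N}$ with $f^s(i) = i + tn$ for all $i\in I$, so that $f^s(x) = x + tn$ coordinatewise. Iterating the previous step $s$ times produces an exact complex of length $sd(d+1)$ with all middle terms in $\mathcal{P}_I$ and with left-hand kernel $\Omega^{sd(d+1)} M(x) = M(f^s(x)) = M(x + tn)$. Since adding $n$ to every coordinate realizes the $G$-action and $M(x)\cong M(\sigma(x))$, we have $M(x+tn)\cong M(x)$; identifying the kernel with $M(x)$ yields the desired sequence $(\ast)$. The main point requiring care is that projectivity genuinely fails at every intermediate stage, so that the syzygy formula may be applied $sd(d+1)$ times without interruption, and that each projective produced keeps all its coordinates inside $I$; both facts rest on the strict monotonicity and bijectivity of $f$ on $I$ established in the first step.
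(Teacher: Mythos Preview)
Your proposal is correct and follows essentially the same route as the paper's proof: both iterate Proposition~\ref{prop_f_detects_syzygyorbit}, use bijectivity of $f|_I$ from Proposition~\ref{Prop_f}(ii) to guarantee nonprojectivity at each stage and membership of the projectives in $\mathcal{P}_I$, invoke Lemma~\ref{lem_syzygy_f} to reach $\Omega^{d(d+1)s}M(x)=M(f^s(x))$, and close up via the periodicity $f^s(i)=i+tn$ together with $M(\sigma^t(x))\cong M(x)$. Your explicit observation that $f|_I$ is strictly increasing (non-decreasing and injective) is exactly what underlies the paper's one-line justification ``$x_d<x_{d+1}$ implies $f(x_d)<f(x_{d+1})$''.
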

\begin{proof}
     Since $f(I)\subset I$ and $x\in os_I^{d+1}$,
     the sequence 
     \begin{equation*}
         \xymatrix@C=1em@R=1em{
         0\ar[r] & \Omega^dM(x)\ar[r] & P_d\ar[r] & \cdots\ar[r] & P_1\ar[r] & M(x)\ar[r] & 0
         }
     \end{equation*}
     from Proposition \ref{prop_f_detects_syzygyorbit} lies in $\mathcal{W}$.
     In particular, $P_i\in \mathcal{P}_I$ for  $1\leq i\leq d$.
     Moreover, since $f|_I : I\rightarrow I$ is bijective by Proposition \ref{Prop_f} $(ii)$,
     we get that $\Omega^dM(x) = M(f(x_{d+1}), x_1, \ldots, x_d)\in \mathcal{W}$ is not projective.
     Indeed, $x_d < x_{d+1}$ implies $f(x_d) < f(x_{d+1})$.
     By iteration, we get 
     \begin{equation*}
         \xymatrix@C=1em@R=1em{
         0\ar[r] & \Omega^{dr}M(x)\ar[r] & P_{dr}\ar[r] & \cdots\ar[r] & P_1\ar[r] & M(x)\ar[r] & 0
         }
     \end{equation*}
     for any $r\geq 1$.
     By Lemma \ref{lem_syzygy_f}, we have that $\Omega^{d(d+1)s}M(x) = M(f^s(x))$ for all $s\geq 1$.
    By Proposition \ref{Prop_f} $(ii)$, there exists $s, t\in \mathbb{N}$ such that $f^s(i) = i+tn$ for all $i\in I$. Then
     $$\Omega^{d(d+1)s}(M(x)) = M(f^s(x)) \cong M(\sigma^t(x)) \cong M(x).$$
     In particular, we have the following exact sequence
    \begin{equation*}
        \xymatrix@C=1em@R=1em{
            0\ar[r] & M(x)\ar[r] & P_{d(d+1)s}\ar[r] & \cdots\ar[r] & P_{1}\ar[r] & M(x)\ar[r] & 0
        }
    \end{equation*}
    with $P_i\in \mathcal{P}_I$ for all $1\leq i\leq d(d+1)s$.
\end{proof}

\begin{Prop}\label{Prop_prepForWide2}
     There is a $k$-linear equivalence $i_{\rho}: \mathcal{W} \xrightarrow[]{\sim} \mathcal{M}_{n' ,\ell'}^{(d)}$. 
\end{Prop}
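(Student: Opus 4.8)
The plan is to deduce the statement from Theorem~\ref{Thm_WideSubcat}$(a)$, applied to the $d$-homological pair $(A,\mathcal{M}_{\underline{\ell}}^{(d)})$, the subcategory $\mathcal{W}$, and the module $P$; the conclusion of that part gives an equivalence $i_{\lambda}\colon \mathcal{N}\to\mathcal{W}$ with $\mathcal{N}=i_{\rho}(\mathcal{W})$, whose quasi-inverse is $i_{\rho}$, so it remains only to check the four hypotheses and to identify $\mathcal{N}$ with $\mathcal{M}_{n',\ell'}^{(d)}$. Hypotheses $(i)$ and $(ii)$ are immediate, since $P$, being a basic additive generator of $\mathcal{P}_I\subset\mathcal{P}$, is projective, so $\projdim_A P=0$ and $\Ext_A^i(P,P)=0$ for $i\geq1$ (cf.\ the Remark after Theorem~\ref{Thm_WideSubcat}). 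Hypothesis $(iii)$ is exactly Proposition~\ref{Prop_prepForWide}: a nonprojective $M(x)\in\mathcal{W}$ is its own $sd(d+1)$-th syzygy via the exact sequence $(\ast)$, whose middle terms lie in $\mathcal{P}_I=\add P$, so splicing copies of $(\ast)$ yields an infinite $\add P$-resolution (projective objects being their own resolutions).

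The substantive point is hypothesis $(iv)$, that $(B,\mathcal{N})$ is a $d$-homological pair, and I would settle it by showing $\mathcal{N}=i_{\rho}(\mathcal{W})=\mathcal{M}_{n',\ell'}^{(d)}$ under the identification $B\cong A'$ of Proposition~\ref{A'IsoB}. The heart of this is the object-level computation $i_{\rho}(M(z))\cong M(\iota^{-1}(z))$ for each indecomposable $M(z)\in\mathcal{W}$. To obtain it, take the start $P_1\xrightarrow{g}P_0\to M(z)\to0$ of the minimal projective resolution of $M(z)$; by Propositions~\ref{prop_f_detects_syzygyorbit} and~\ref{Prop_prepForWide} its terms lie in $\mathcal{P}_I=\add P$. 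Applying the exact functor $i_{\rho}$, which carries $\add P$ onto $\add B$, gives a projective presentation $i_{\rho}(P_1)\xrightarrow{i_{\rho}(g)}i_{\rho}(P_0)\to i_{\rho}(M(z))\to0$ over $B$, so that $i_{\rho}(M(z))\cong\coker i_{\rho}(g)$. On the $A'$ side, $M(\iota^{-1}(z))$ has the analogous presentation $Q_1\xrightarrow{h}Q_0\to M(\iota^{-1}(z))\to0$. Because the formulas for projective covers and syzygies over $A'$ are governed by $f'$ while those for $\mathcal{W}$ in $\modn A$ are governed by $f|_I$, and these are intertwined by the order bijection $\iota$ via $f'=\iota^{-1}\circ f|_I\circ\iota$ (Remark~\ref{f'} together with Propositions~\ref{Prop_modcomb} and~\ref{prop_f_detects_syzygyorbit}), the equivalence $\iota^{\ast}$ of Proposition~\ref{iotastarisequiv} sends this presentation to the previous one, i.e.\ $\iota^{\ast}(Q_i)=P_i$ and $\iota^{\ast}(h)=g$. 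Since $i_{\rho}\circ\iota^{\ast}$ is precisely the identification $\add A'\xrightarrow{\sim}\add B$ used in Proposition~\ref{A'IsoB}, this gives $i_{\rho}(g)=h$ and hence $i_{\rho}(M(z))\cong\coker h\cong M(\iota^{-1}(z))$.

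Granting this, $\mathcal{N}=\add\{M(\iota^{-1}(z))\mid z\in os_I^{d+1}\}=\add\{M(x)\mid x\in os_{\ell'_{\infty}}^{d+1}\}=\mathcal{M}_{n',\ell'}^{(d)}$, which is $d\mathbb{Z}$-cluster tilting over $B\cong A'$; this establishes $(iv)$. Theorem~\ref{Thm_WideSubcat}$(a)$ then yields the equivalence $i_{\lambda}\colon\mathcal{N}\to\mathcal{W}$, and as $i_{\lambda}$ and $i_{\rho}$ restrict to mutually quasi-inverse equivalences between $\add B$ and $\add P$, its quasi-inverse is the asserted $i_{\rho}\colon\mathcal{W}\xrightarrow{\sim}\mathcal{N}=\mathcal{M}_{n',\ell'}^{(d)}$. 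I expect the main obstacle to be the presentation-matching step: $\iota^{\ast}$ is a priori only a $k$-linear equivalence of additive categories, so one must verify with care that it transports the module-theoretic projective presentations computed in $\modn A'$ to those computed in $\modn A$. This is exactly where the explicit syzygy formula of Proposition~\ref{prop_f_detects_syzygyorbit} and the compatibility $f'=\iota^{-1}\circ f|_I\circ\iota$ of Remark~\ref{f'} become indispensable.
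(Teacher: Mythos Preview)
Your core computation---taking the minimal projective presentation of $M(z)\in\mathcal{W}$, applying the exact functor $i_{\rho}$, and matching it against the presentation of $M(\iota^{-1}(z))$ via the compatibility $f'=\iota^{-1}\circ f|_I\circ\iota$---is exactly what the paper does. The difference is organizational. The paper proves Proposition~\ref{Prop_prepForWide2} \emph{directly}: after computing $i_{\rho}$ on objects it also computes $i_{\rho}(f_{yx}^i)=f_{\iota^{-1}(y)\iota^{-1}(x)}^i$ on basis morphisms (from the same presentation matching), and concludes immediately that $i_{\rho}$ is an equivalence with quasi-inverse $\iota^{\ast}$. It does this precisely so that Proposition~\ref{Prop_prepForWide2} can serve as the standalone verification of hypothesis~$(iv)$ when Theorem~\ref{Thm_WideSubcat} is invoked later, in Proposition~\ref{W_IsWide}. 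You instead fold the two together: you use the object-level computation to verify~$(iv)$, apply Theorem~\ref{Thm_WideSubcat}$(a)$ to get that $i_{\lambda}$ is an equivalence, and then pass to $i_{\rho}$ by adjunction. This is logically sound (the adjunction $i_{\lambda}\dashv i_{\rho}$, rather than the equivalence on $\add P$, is the clean reason $i_{\rho}$ is the quasi-inverse), but slightly roundabout: once you have matched the presentations you are one line away from the direct conclusion, and invoking Theorem~\ref{Thm_WideSubcat} adds no real leverage.
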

\begin{proof}
    By our identification $\add B \cong \add A'$ given by Proposition \ref{A'IsoB}, it follows that $i_{\rho}\circ \iota^{\ast}$ restricted to $\add B$ is isomorphic to $ Id_{\add B}$.
    Hence, for $M(x), M(y)\in \add P$, $i_{\rho}(M(x)) = M(\iota^{-1}(x))$ and $i_{\rho}(f_{yx}^i) = f_{\iota^{-1}(y)\iota^{-1}(x)}^i$ for $f_{yx}^i: M(x)\rightarrow M(y)$.
    For a nonprojective indecomposable object $M(x)\in \mathcal{W}$ with $x = (x_1, \ldots, x_{d+1})$, we take the minimal projective presentation of $M(x)$
    \begin{equation*}
        M(x^1)\rightarrow M(x^0)\rightarrow M(x)\rightarrow 0
    \end{equation*}
    where $x^0 = (f(x_{d+1}), x_2, \ldots, x_{d+1})$ and $x^1 = (f(x_{d+1}), x_1, x_3, \ldots, x_{d+1})$.
    Applying the exact functor $i_{\rho}$ to it gives us the minimal projective presentation of $i_{\rho}(M(x))$
    \begin{equation*}
        M(\iota^{-1}(x^1))\rightarrow M(\iota^{-1}(x^0))\rightarrow i_{\rho}(M(x))\rightarrow 0.
    \end{equation*}
    Thus $i_{\rho}(M(x)) = M(\iota^{-1}(x))$.
    For $f_{yx}^i: M(x)\rightarrow M(y)$, 
    we obtain, using the above identification, that $i_{\rho}(f_{yx}^i) = f_{\iota^{-1}(y)\iota^{-1}(x)}^i : M(\iota^{-1}( x))\rightarrow M(\iota^{-1}(y))$.
    Since $i_{\rho}$ is $k$-linear, it follows that $i_{\rho}$ is an additive equivalence, namely the quasi-inverse is given by $\iota^{\ast}$.
\end{proof}

\begin{Prop}\label{W_IsWide}
  The following statements hold.
    \begin{itemize}
        \item [(a)] $\mathcal{W}$ is a wide subcategory of     $\mathcal{M}_{\underline{\ell}}^{(d)}$.
        \item [(b)] $i_{\lambda}$ and $i_{\rho}$ restrict to a quasi-inverse equivalence of $d$-abelian categories
    \begin{equation*}
        \xymatrix{
          \mathcal{M}_{n', \ell'}^{(d)}\ar@/^/[rr]^{i_{\lambda}} && \mathcal{W}\ar@/^/[ll]^{i_{\rho}}.
         }
    \end{equation*}
    \end{itemize}
\end{Prop}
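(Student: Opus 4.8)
The plan is to obtain both statements as a direct application of Theorem \ref{Thm_WideSubcat} to the $d$-homological pair $(A, \mathcal{M}_{\underline{\ell}}^{(d)})$, taking the additive subcategory to be $\mathcal{W}$ and the module to be $P$, the basic additive generator of $\mathcal{P}_I$. Since $B = \End_A(P)$ has already been identified with the self-injective algebra $A' = A_{n',\ell'}^{(d)}$ in Proposition \ref{A'IsoB}, the whole task reduces to verifying the four hypotheses (i)--(iv) of that theorem; statement (a) is then literally part (a) of the theorem, and (b) follows by upgrading the resulting equivalence to a quasi-inverse pair via the adjunction $i_\lambda \dashv i_\rho$.

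First I would check the hypotheses. Conditions (i) and (ii) are immediate: by construction $P$ is a projective $A$-module (it generates $\mathcal{P}_I = \mathcal{W}\cap\mathcal{P}$, the projectives of $\mathcal{W}$), so $\projdim P_A = 0$ and $\Ext_A^i(P,P)=0$ for $i\geq 1$, exactly as recorded in the Remark following Theorem \ref{Thm_WideSubcat}. For condition (iii), a projective $M(x)\in\mathcal{W}$ lies in $\add P$ and has the trivial resolution, while for a nonprojective $M(x)\in\mathcal{W}$ Proposition \ref{Prop_prepForWide} furnishes the periodic exact sequence $(\ast)$ of the form $0\to M(x)\to P_{sd(d+1)}\to\cdots\to P_1\to M(x)\to 0$ with every $P_i\in\mathcal{P}_I=\add P$; splicing countably many copies of $(\ast)$ along the repeated end term $M(x)$ produces an infinite exact $\add P$-resolution, giving (iii). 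For condition (iv), the identification $B\cong A'$ of Proposition \ref{A'IsoB} together with Proposition \ref{Prop_prepForWide2} shows $\mathcal{N} = i_\rho(\mathcal{W}) = \mathcal{M}_{n',\ell'}^{(d)}$, which is the distinguished $d\mathbb{Z}$-cluster tilting subcategory of $\modn A'$ and in particular $d$-cluster tilting, so $(B,\mathcal{N})$ is a $d$-homological pair.

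With (i)--(iv) in hand, Theorem \ref{Thm_WideSubcat}(a) yields statement (a) verbatim, namely that $\mathcal{W}$ is wide in $\mathcal{M}_{\underline{\ell}}^{(d)}$, together with an equivalence $i_\lambda:\mathcal{N} = \mathcal{M}_{n',\ell'}^{(d)}\to\mathcal{W}$. For (b) I would argue that $i_\lambda$ and $i_\rho$ are quasi-inverse, not merely both equivalences. Because the subcategories are full and satisfy $i_\lambda(\mathcal{N})=\mathcal{W}$ and $i_\rho(\mathcal{W})=\mathcal{N}$, the adjunction $i_\lambda\dashv i_\rho$ restricts to an adjunction between $\mathcal{N}$ and $\mathcal{W}$; since the restricted $i_\lambda:\mathcal{N}\to\mathcal{W}$ is an equivalence, its right adjoint $i_\rho:\mathcal{W}\to\mathcal{N}$ (an equivalence by Proposition \ref{Prop_prepForWide2}) is necessarily a quasi-inverse, a right adjoint of an equivalence being its quasi-inverse up to isomorphism. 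Finally, $\mathcal{N}$ is $d$-abelian as a $d$-cluster tilting subcategory and $\mathcal{W}$ is $d$-abelian since it is equivalent to $\mathcal{N}$; as $i_\rho = -\,e$ and $i_\lambda = -\otimes_B P$ are both exact (the latter because ${}_B P$ is projective, as established in the proof of Theorem \ref{Thm_WideSubcat}(b)), they preserve $d$-exact sequences, so the equivalence is one of $d$-abelian categories.

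I expect the only genuinely delicate input to be condition (iii): the existence of the $\add P$-resolutions rests entirely on the periodicity of Proposition \ref{Prop_prepForWide}, which in turn depends on the bijectivity of $f|_I$ from Proposition \ref{Prop_f}(ii). Thus the homological content has all been front-loaded into the earlier propositions, and the present argument is in essence the assembly of those inputs into the hypotheses of Theorem \ref{Thm_WideSubcat}; the points to guard against are that the spliced resolution stays exact and that $\mathcal{N}$ is genuinely $d$-cluster tilting in $\modn B$ under the identification $B\cong A'$.
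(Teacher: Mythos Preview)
Your proposal is correct and follows essentially the same route as the paper: verify hypotheses (i)--(iv) of Theorem~\ref{Thm_WideSubcat} using projectivity of $P$ for (i)--(ii), the spliced periodic resolutions from Proposition~\ref{Prop_prepForWide} for (iii), and Proposition~\ref{Prop_prepForWide2} for (iv), then invoke the theorem. Your treatment of (b) is in fact slightly more careful than the paper's, which simply asserts that part (b) follows from Theorem~\ref{Thm_WideSubcat}; you correctly supply the adjunction argument showing that $i_\rho$ is the quasi-inverse of the equivalence $i_\lambda$, a point the paper leaves implicit.
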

\begin{proof}
    We consider $P\in \mathcal{W}$.
    To apply Theorem \ref{Thm_WideSubcat},
    condition $(i)$ and $(ii)$ are trivial since $P_A$ is projective.
    Take a nonprojective object $M(x) \in \mathcal{W}$.
    By splicing the exact sequences $(\ast)$ in Proposition \ref{Prop_prepForWide}, we obtain an exact (periodic) $\add P$-resolution of $M(x)$.
    Proposition \ref{Prop_prepForWide2} verifies condition $(iv)$.
    Therefore $\mathcal{W}$ is a wide subcategory of $\mathcal{M}_{\underline{\ell}}^{(d)}$.
    Part (b) also follows from Theorem \ref{Thm_WideSubcat}.
\end{proof}

\begin{Eg}\label{egWideSubCat}
Let $n = 5$, $d = 2$ and $\underline{\ell} = (3,4,4,4,4)$. By Example \ref{egCTSubCat} and Example \ref{egResQui},
we have the Auslander-Reiten quiver of $\mathcal{W}$ as follows. Note that it is the same as the Auslander-Reiten quiver of $\mathcal{M}_{4,3}^{(2)}$ as in Example \ref{egSelfinj}.
\begin{equation*}
{\small
    \begin{xy}
        0;<28pt,0cm>:<10pt,20pt>::
        (0,0) *+{234} ="234",
        (0,1) *+{235} ="235",
        (0,2) *+{237} ="237",
        (1,1) *+{245} ="245",
        (1,2) *+{247} ="247",
        (2,2) *+{257} ="257",
        (3,0) *+{345} ="345",
        (3,1) *+{347} ="347",
        (3,2) *+{348} ="348",
        (4,1) *+{357} ="357",
        (4,2) *+{358} ="358",
        (5,2) *+{378} ="378",
        (6,0) *+{457} ="457",
        (6,1) *+{458} ="458",
        (6,2) *+{459} ="459",
        (7,1) *+{478} ="478",
        (7,2) *+{479} ="479",
        (8,2) *+{489} ="489",
        (9,0) *+{578} ="578",
        (9,1) *+{579} ="579",
        (9,2) *+{57X} ="57X",
        (10,1) *+{589} ="589",
        (10,2) *+{58X} ="58X",
        (11,2) *+{59X} ="59X",
        (12,0) *+{234} ="789",
        (12,1) *+{235} ="78X",
        (12,2) *+{237} ="78Y",
        "234", {\ar"235"},
        "235", {\ar"237"},
        "245", {\ar"247"},
        "235", {\ar"245"},
        "237", {\ar"247"},
        "247", {\ar"257"},
        "345", {\ar"347"},
        "347", {\ar"348"},
        "357", {\ar"358"},
        "347", {\ar"357"},
        "348", {\ar"358"},
        "358", {\ar"378"},
        "457", {\ar"458"},
        "458", {\ar"459"},
        "478", {\ar"479"},
        "458", {\ar"478"},
        "459", {\ar"479"},
        "479", {\ar"489"},
        "578", {\ar"579"},
        "579", {\ar"57X"},
        "589", {\ar"58X"},
        "579", {\ar"589"},
        "57X", {\ar"58X"},
        "58X", {\ar"59X"},
        "789", {\ar"78X"},
        "78X", {\ar"78Y"},
        "245", {\ar"345"},
        "247", {\ar"347"},
        "257", {\ar"357"},
        "357", {\ar"457"},
        "358", {\ar"458"},
        "378", {\ar"478"},
        "478", {\ar"578"},
        "479", {\ar"579"},
        "489", {\ar"589"},
        "589", {\ar"789"},
        "58X", {\ar"78X"},
    \end{xy}}
\end{equation*}
\end{Eg}

\begin{Thm}\label{Thm_main}
    The functor $i_{\lambda}: \modn B \rightarrow \modn A$ induces a singular equivalence between $B$ and $A$.
    More precisely,
    \begin{equation*}
        D_{sg}(i_{\lambda}): D_{sg}(B) \rightarrow D_{sg}(A)
    \end{equation*}
    is an triangule equivalence.
    In addition, $D_{sg}(i_{\lambda})$ restricts to an equivalence between $(d+2)$-angulated categories
    \begin{equation*}
        D_{sg}(i_{\lambda}): \underline{\mathcal{M}}_{n', \ell'}^{(d)} \rightarrow \underline{\mathcal{M}}_{\underline{\ell}}^{(d)}.
    \end{equation*}
\end{Thm}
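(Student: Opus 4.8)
The plan is to derive the whole statement from Theorem \ref{Thm_WideSubcat}, applied to the $d$-homological pair $(A, \mathcal{M}_{\underline{\ell}}^{(d)})$ together with the wide subcategory $\mathcal{W}$ and the module $P$. Most of the required input is already in place: Proposition \ref{W_IsWide} verifies conditions $(i)$–$(iv)$ of Theorem \ref{Thm_WideSubcat}, so statement $(a)$ supplies the equivalence $i_{\lambda}: \mathcal{M}_{n',\ell'}^{(d)} \xrightarrow{\sim} \mathcal{W}$, and statement $(b)$ immediately yields a fully faithful triangle functor $D_{sg}(i_{\lambda}): D_{sg}(B) \to D_{sg}(A)$. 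Thus the only substantive point left for the first assertion is the density criterion of Theorem \ref{Thm_WideSubcat}$(b)$: that for every indecomposable $M \in \mathcal{M}_{\underline{\ell}}^{(d)}$ some syzygy $\Omega^m(M)$ lies in $\mathcal{W}$.

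To check this I would run along the syzygy orbit, which the resolution quiver was designed to control. Fix an indecomposable $M(x)$ with $x \in os_{\ell_{\infty}}^{d+1}$. If $\projdim M(x) < \infty$, then $\Omega^m(M(x)) = 0 \in \mathcal{W}$ for $m \gg 0$, so we may assume $\projdim M(x) = \infty$. By the contrapositive of the last assertion of Lemma \ref{lem_syzygy_f}, this forces $f^s(x) \neq 0$ for all $s$, and consequently $\Omega^{d(d+1)s}(M(x)) = M(f^s(x))$ for every $s \geq 1$. Now I would invoke Proposition \ref{Prop_f}$(iii)$ to choose $N$ with $f^{N}(j) \in I$ for all $j \in \mathbb{Z}$; then every coordinate of $f^{N}(x)$ lies in $I$, i.e. $f^{N}(x) \in os_I^{d+1}$, whence $\Omega^{d(d+1)N}(M(x)) = M(f^{N}(x)) \in \mathcal{W}$. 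This establishes the density criterion, so $D_{sg}(i_{\lambda})$ is a triangle equivalence.

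For the final assertion I would feed this back into Theorem \ref{Thm_WideSubcat}$(c)$. The subcategory $\mathcal{M}_{\underline{\ell}}^{(d)}$ is $d\mathbb{Z}$-cluster tilting in $\modn A$ by the construction recalled in Section \ref{HighNakAlg}, so part $(c)$ applies verbatim: the triangle equivalence restricts to an equivalence of $(d+2)$-angulated categories $D_{sg}(i_{\lambda}): \underline{\mathcal{M}}_{n',\ell'}^{(d)} \to \underline{\mathcal{M}}_{\underline{\ell}}^{(d)}$, both sides being $d\mathbb{Z}$-cluster tilting by Theorem \ref{Thm_Kv} and hence $(d+2)$-angulated by Remark \ref{dZ-CTInTriCat}.

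I expect the main obstacle to be the bookkeeping in the density step—namely ensuring that the recursion $\Omega^{d(d+1)s}(M(x)) = M(f^s(x))$ terminates in a projective exactly when $\projdim M(x) < \infty$ (equivalently $f^s(x) = 0$ for some $s$), and that a single power of $f$ simultaneously drives all $d+1$ coordinates of $x$ into $I$. Both of these are already packaged in Lemma \ref{lem_syzygy_f} and Proposition \ref{Prop_f}$(iii)$, so once they are in hand the argument becomes essentially mechanical; the real conceptual content lives in the wide-subcategory machinery of Theorem \ref{Thm_WideSubcat} rather than in this final assembly.
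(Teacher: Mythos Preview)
Your proposal is correct and follows essentially the same route as the paper: invoke Proposition \ref{W_IsWide} to place yourself in the setting of Theorem \ref{Thm_WideSubcat}, verify the density criterion of part $(b)$ via Proposition \ref{Prop_f}$(iii)$ and Lemma \ref{lem_syzygy_f}, and then read off the $(d+2)$-angulated equivalence from part $(c)$. Your explicit case split on whether $\projdim M(x)$ is finite is slightly more careful than the paper's phrasing (which tacitly assumes $f^N(x)\neq 0$), but the substance is identical.
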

\begin{proof}
    As in the proof of Proposition \ref{W_IsWide}, we can apply Theorem \ref{Thm_WideSubcat}.
    We verify the condition in Theorem \ref{Thm_WideSubcat} $(b)$. For $M(x)\in \mathcal{M}_{\underline{\ell}}^{(d)}$,
    by Proposition \ref{Prop_f} $(iii)$, there is an integer $ N\in \mathbb{N}$ such that $f^{N}(x)\in os_I^{d+1}$.
    This implies that $\Omega^{d(d+1)N}(M(x)) = M(f^{N}(x)) \in \mathcal{W}$ by Lemma \ref{lem_syzygy_f}.
    So Theorem \ref{Thm_WideSubcat} $(b)$ applies and 
    we have that $D_{sg}(i_{\lambda}): D_{sg}(B)\rightarrow D_{sg}(A)$ is a triangle equivalence.
    Since $\mathcal{M}_{\underline{\ell}}^{(d)}$ is a $d\mathbb{Z}$-cluster tilting subcategory of $\modn A$,
    by Theorem \ref{Thm_WideSubcat} $(c)$, 
    $D_{sg}(i_{\lambda})$ restricts to an equivalence between $(d+2)$-angulated categories $\underline{\mathcal{M}}_{n', \ell'}^{(d)}$ and $\underline{\mathcal{M}}_{\underline{\ell}}^{(d)}$.
\end{proof}

\begin{Cor}\label{corSingEquiToStableCat}
    The singularity category of $A$ is triangulated equivalent to the stable module category of $B$.
    More precisely,  $i_{\lambda}: \modn B\rightarrow \modn A$ induces an triangule equivalence
    \begin{equation*}
        D_{sg}(i_{\lambda}): \underline{\modn }B\rightarrow D_{sg}(A).
    \end{equation*}
\end{Cor}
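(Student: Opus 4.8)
The plan is to derive this corollary as a formal consequence of Theorem \ref{Thm_main} together with the self-injectivity of $B$ recorded in Proposition \ref{A'IsoB}. The conceptual input is the classical fact that the singularity category of a self-injective algebra is nothing but its stable module category, so that the equivalence $D_{sg}(B)\to D_{sg}(A)$ of Theorem \ref{Thm_main} can be re-read with source $\underline{\modn}\,B$.

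First I would introduce, exactly as for $A$ in the introduction, the canonical functor $q_B:\underline{\modn}\,B\to D_{sg}(B)$ induced by the composite $\modn B\to D^b(\modn B)\to D_{sg}(B)$, which vanishes on projectives. By Proposition \ref{A'IsoB} we have $B\cong A'=A_{n',\ell'}^{(d)}$, which is self-injective, so $\modn B$ is Frobenius and $\underline{\modn}\,B$ carries its usual triangulated structure with suspension $\Omega^{-1}$ (see \cite{Ha88}). The Buchweitz--Happel theorem \cite{Buc21} then applies: for a self-injective algebra every module is maximal Cohen--Macaulay (Gorenstein projective), so the stable category of maximal Cohen--Macaulay modules is all of $\underline{\modn}\,B$, and $q_B:\underline{\modn}\,B\to D_{sg}(B)$ is a triangle equivalence.

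Next I would invoke Theorem \ref{Thm_main}, which supplies the triangle equivalence $D_{sg}(i_{\lambda}):D_{sg}(B)\to D_{sg}(A)$, and form the composite
\begin{equation*}
    \underline{\modn}\,B\xrightarrow{\ q_B\ }D_{sg}(B)\xrightarrow{\ D_{sg}(i_{\lambda})\ }D_{sg}(A).
\end{equation*}
Being a composite of two triangle equivalences, this is itself a triangle equivalence. Finally I would check that this composite coincides with the functor $\underline{\modn}\,B\to D_{sg}(A)$ induced by $i_{\lambda}$: since $i_{\lambda}$ is exact and sends projective $B$-modules to direct summands of powers of $P$, hence to projective $A$-modules, it descends to $\underline{i_{\lambda}}:\underline{\modn}\,B\to\underline{\modn}\,A$, and the square relating $q_B$, $q_A$, $\underline{i_{\lambda}}$ and $D_{sg}(i_{\lambda})$ commutes because all four functors are induced from the module-level $i_{\lambda}$ together with the canonical quotient functors. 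Thus $D_{sg}(i_{\lambda})\circ q_B = q_A\circ\underline{i_{\lambda}}$ is precisely the functor named in the statement.

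There is no genuine obstacle here, as the substantive work is already carried out in Theorem \ref{Thm_main}; the corollary is essentially a reformulation. The only points demanding care are the correct application of Buchweitz--Happel to identify $D_{sg}(B)$ with $\underline{\modn}\,B$, and the verification that the triangulated structure coming from the Frobenius structure on $\modn B$ matches the one transported along $q_B$ from $D_{sg}(B)$. Both are standard once self-injectivity of $B$ is in hand via Proposition \ref{A'IsoB}.
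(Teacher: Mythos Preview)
Your proposal is correct and follows exactly the same approach as the paper: invoke the self-injectivity of $B$ (via Proposition~\ref{A'IsoB}) to identify $D_{sg}(B)$ with $\underline{\modn}\,B$, and then apply Theorem~\ref{Thm_main}. The paper's proof is simply a two-sentence version of what you wrote; your additional checks (Buchweitz--Happel, compatibility of the composite with $q_A\circ\underline{i_{\lambda}}$) are correct elaborations that the paper leaves implicit or relegates to the remark immediately following the corollary.
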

\begin{proof}
     Since $B$ is self-injective, we have that $D_{sg}(B)\cong \underline{\modn }B$. 
     By Theorem \ref{Thm_main}, 
     the statement follows.
\end{proof}

\begin{Rem}
    We have the following commutative diagram
     \begin{equation*}
         \xymatrix@C=6em@R=2em{
             \modn B\ar[r]^{i_{\lambda}}\ar[d] &  \modn A\ar[d]\\
             \underline{\modn }B\ar[r]^{\underline{i_{\lambda}}}\ar[d]_{Id} & \underline{\modn }B\ar[d]^{q}\\
             \underline{\modn }B\ar[r]^{D_{sg}(i_{\lambda})} & D_{sg}(A).
         }
     \end{equation*}
     Hence $D_{sg}(i_{\lambda}) = q\circ \underline{i_{\lambda}}$. 
\end{Rem}

By \cite[Theorem A]{JKM22}, there is a bijective correspondence 
between the equivalence classes of pairs $(\mathcal{T}, c)$ with $\mathcal{T}$ an algebraic Krull-Schmidt triangulated category with finite dimensional morphism spaces and $c$ a basic $d\mathbb{Z}$-cluster tilting object of $\mathcal{T}$ and the equivalence classes of pairs $(\Lambda, I)$ with $\Lambda$ a basic twisted $(d+2)$-periodic self-injective algebra and $I$ an invertible $\Lambda$-bimodule, such that 
$I \cong \Omega_{\Lambda^e}^{d+2}(\Lambda)$ in $\underline{\modn }\Lambda^e$ where $\Lambda^e = \Lambda\otimes_k\Lambda^{op}$.
By restricting this correspondence to our setting, we have the following proposition.

\begin{Prop}\label{TriAICorrespondence}
    Let $A, B$ be as above. Let $M$ (resp. $N$) be the distinguished $d\mathbb{Z}$-cluster tilting module of $A$ (resp. $B$). 
    Then 
    \begin{equation*}
        D_{sg}(M,M) \cong \underline{\End}_B(N) = A_{n', \ell' - 1}^{(d+1)}.
    \end{equation*}
    Moreover:
    \begin{itemize}
        \item [(i)] $D_{sg}(A)$ has a unique dg-enhancement.
        \item [(ii)] Let $\mathcal{T}$ be an algebraic Krull-Schmidt triangulated category with finite dimensional morphism spaces.
        If there exists a basic $d\mathbb{Z}$-cluster tilting object $c\in \mathcal{T}$ such that $\mathcal{T}(c,c) \cong A_{n', \ell' - 1}^{(d+1)}$. 
        Then
        \begin{equation*}
            \mathcal{T} \simeq D_{sg}(A) 
        \end{equation*}
        as triangulated categories.
    \end{itemize}
\end{Prop}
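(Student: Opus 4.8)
The plan is to transport the endomorphism algebra of the distinguished $d\mathbb{Z}$-cluster tilting object across the singular equivalence of Theorem \ref{Thm_main}, to identify the transported algebra combinatorially with $A_{n',\ell'-1}^{(d+1)}$, and then to invoke the derived Auslander--Iyama correspondence of \cite{JKM22}. For the first step I would use that $B$ is self-injective by Proposition \ref{A'IsoB}, so that $D_{sg}(B) = \underline{\modn}B$ and $\End_{D_{sg}(B)}(q(N)) = \underline{\End}_B(N)$. By Theorem \ref{Thm_main} the functor $D_{sg}(i_{\lambda})$ restricts to an equivalence of $(d+2)$-angulated categories $\underline{\mathcal{M}}_{n',\ell'}^{(d)}\xrightarrow{\sim}\underline{\mathcal{M}}_{\underline{\ell}}^{(d)}$ sending $q(N)$ to $q(i_{\lambda}(N))$; since $i_{\lambda}(N)$ generates $\mathcal{W}$ and $\underline{\mathcal{W}} = \underline{\mathcal{M}}_{\underline{\ell}}^{(d)}$ in $D_{sg}(A)$, both $q(i_{\lambda}(N))$ and $q(M)$ are basic additive generators of the same Krull--Schmidt category and hence isomorphic. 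This gives
\begin{equation*}
    \underline{\End}_B(N) = \End_{D_{sg}(B)}(q(N)) \cong \End_{D_{sg}(A)}(q(M)) = D_{sg}(M,M).
\end{equation*}

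The combinatorial identification $\underline{\End}_B(N)\cong A_{n',\ell'-1}^{(d+1)}$ is where I expect the real work to lie. Starting from the explicit presentation of $\mathcal{M}_{n',\ell'}^{(d)}$ recalled in Section \ref{HighNakAlg} --- indecomposables $M(x)$ with $x\in os_{\ell'_\infty}^{d+1}$ taken modulo $\sigma$, morphism spaces spanned by the $f_{yx}^i$ with the stated composition law --- I would first discard the projective-injective summands, which vanish in $\underline{\modn}B$, and then determine exactly which basis morphisms $f_{yx}^i$ factor through a projective, using the combinatorial description of the $d$-fold syzygy and of projective covers from Proposition \ref{prop_f_detects_syzygyorbit}. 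The surviving quotients assemble into the Hom-spaces of $\underline{\End}_B(N)$, and I would match the resulting quiver together with its commutativity and zero relations against the Gabriel quiver with relations of the $(d+1)$-Nakayama algebra $A_{n',\ell'-1}^{(d+1)}$. The delicate point, and the main obstacle, is the bookkeeping that reindexes the surviving stable indecomposables by length-$(d+1)$ sequences so that they become the vertices of a $(d+1)$-Nakayama quiver of type $\widetilde{\mathbb{A}}_{n'-1}$ with constant Kupisch series $(\ell'-1,\dots,\ell'-1)$; this forces a careful comparison of the stable morphism structure with the higher syzygy combinatorics. Combined with the first step this yields $D_{sg}(M,M)\cong A_{n',\ell'-1}^{(d+1)}$.

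For the remaining assertions I would feed the pair $(D_{sg}(A), q(M))$ into \cite[Theorem A]{JKM22}. By Corollary \ref{corSingEquiToStableCat} there is a triangle equivalence $D_{sg}(A)\simeq\underline{\modn}B$, and since $B$ is finite-dimensional and self-injective, $\underline{\modn}B$ is an algebraic Krull--Schmidt triangulated category with finite-dimensional morphism spaces; these properties transfer to $D_{sg}(A)$. By Theorem \ref{Thm_Kv} the image $q(M)$ is a basic $d\mathbb{Z}$-cluster tilting object with endomorphism algebra $A_{n',\ell'-1}^{(d+1)}$, so the forward direction of the correspondence shows this algebra is twisted $(d+2)$-periodic and self-injective, while the uniqueness of enhancement built into the correspondence gives part (i). For part (ii), I would observe that $(\mathcal{T},c)$ and $(D_{sg}(A), q(M))$ are then two pairs in the sense of \cite{JKM22} whose endomorphism algebras $\mathcal{T}(c,c)$ and $D_{sg}(M,M)$ are both isomorphic to $A_{n',\ell'-1}^{(d+1)}$; since the correspondence is a bijection onto Morita classes of such algebras, the two pairs are equivalent, and in particular $\mathcal{T}\simeq D_{sg}(A)$ as triangulated categories.
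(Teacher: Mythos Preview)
Your overall strategy is correct and matches the paper's: transport across the singular equivalence of Corollary~\ref{corSingEquiToStableCat}, identify the stable endomorphism algebra, and invoke the derived Auslander--Iyama correspondence of \cite{JKM22}. The paper's own proof is a single sentence: ``Combined with Corollary~\ref{corSingEquiToStableCat}, this is parallel to \cite[Theorem 6.5.2]{JKM22}.''

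The difference is in your middle step. You flag the combinatorial identification $\underline{\End}_B(N)\cong A_{n',\ell'-1}^{(d+1)}$ as ``where the real work lies'' and outline a direct computation from the explicit basis $\{f_{yx}^i\}$ of $\mathcal{M}_{n',\ell'}^{(d)}$, quotienting out morphisms that factor through projectives and then matching the resulting quiver with relations against that of $A_{n',\ell'-1}^{(d+1)}$. This is a viable route, but the paper avoids it entirely: the identification $\underline{\End}_B(N) = A_{n',\ell'-1}^{(d+1)}$ for the self-injective $d$-Nakayama algebra $B = A_{n',\ell'}^{(d)}$, together with the recognition statement and the uniqueness of dg-enhancement, is precisely the content of \cite[Theorem 6.5.2]{JKM22}. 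So the paper simply cites that result and observes that Corollary~\ref{corSingEquiToStableCat} transports it from $\underline{\modn}B$ to $D_{sg}(A)$. Your proposed computation would reprove part of \cite{JKM22} rather than use it; everything else in your outline (steps one and three) is essentially identical to what the paper does.
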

\begin{proof}
    Combined with Corollary \ref{corSingEquiToStableCat}, this is parallel to \cite[Theorem 6.5.2]{JKM22}.
\end{proof}

Let $\Lambda = A_{n, \ell- 1}^{(d+1)}$ be a self-injective $(d+1)$-Nakayama algebra with $n\geq 1, \ell\geq 2$. 
Denote by $Q_{\Lambda}$ the Gabriel quiver of $\Lambda$. 
We define an automorphism $\Phi$ of $Q_{\Lambda}$ as follows.
    \begin{align*}
        \Phi: Q_{\Lambda} & \rightarrow Q_{\Lambda}\\
        (x_1, \ldots, x_{d+1}) & \mapsto (f(x_{d+1}), x_1, \ldots, x_d)\\
        [a_i(x): x\rightarrow x+e_i] & \mapsto [a_{i+1}(\Phi(x)): \Phi(x)\rightarrow \Phi(x) + e_{i+1}]
    \end{align*}
    where $f(i) = i - \ell - d + 1$ for $i\in \mathbb{Z}$. By convention, let $a_{d+2} = a_1$ and $e_{d+2} = e_1$.
This is well-defined since $x_1 \geq x_{d+1} - \ell - d + 2 > f(x_{d+1})$.
Moreover, the relations of $Q_{\Lambda}$ are invariant under $\Phi$ since 
\begin{align*}
    0 & = \Phi(a_j(x+e_i)a_i(x) - a_i(x+e_j)a_j(x))\\
    & = a_{j+1}(\Phi(x) + e_{i+1})a_{i+1}(\Phi(x)) - a_{i+1}(\Phi(x) + e_{j+1})a_{j+1}(\Phi(x)).
\end{align*}
 Hence we can extend $\Phi$ linearly to get an algebra automorphism $\Phi: \Lambda\xrightarrow[]{\sim} \Lambda$. 
 Additionally, we denote by $(-)_{\Phi}: \modn \Lambda \rightarrow \modn \Lambda$ the auto-equivalence induced by $\Phi$.

\begin{Prop}
    Let $\Lambda$ and $\Phi$ be as above. 
    Then $\Lambda$ is twisted $(d+2)$-periodic, that is, $\Omega_{\Lambda}^{d+2} \cong (\text{ } )_{\Phi}$ as functors in $\underline{\modn }\Lambda$.
\end{Prop}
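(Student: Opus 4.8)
The plan is to exhibit a natural isomorphism $\Omega^{d+2}\cong(-)_\Phi$ of autoequivalences of $\underline{\modn}\Lambda$. First I would put the two functors on equal footing. Since $\Lambda$ is self-injective, $\modn\Lambda$ is Frobenius and $\Omega$ is an autoequivalence of $\underline{\modn}\Lambda$; and since $\Phi$ is an algebra automorphism, $(-)_\Phi$ is an exact autoequivalence of $\modn\Lambda$ which carries projective covers to projective covers, hence descends to a triangle autoequivalence of $\underline{\modn}\Lambda$ commuting with $\Omega$. Thus both $\Omega^{d+2}$ and $(-)_\Phi$ are triangle autoequivalences, and the task reduces to producing a natural isomorphism between them.

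The core computation is carried out on the distinguished $(d+1)\mathbb{Z}$-cluster tilting subcategory $\mathcal{M}=\add\{M(x)\}$ of $\modn\Lambda$. Applying the analogue of Proposition \ref{prop_f_detects_syzygyorbit} for the $(d+1)$-Nakayama algebra $\Lambda$ — where the relevant syzygy map is $i\mapsto i-(\ell-1)-(d+1)+1=i-\ell-d+1$, which is exactly the $f$ used in the definition of $\Phi$ — gives, for non-projective $M(x)$,
\[
\Omega^{d+1}M(x)=M(f(x_{d+2}),x_1,\ldots,x_{d+1}),
\]
so that $\Omega^{d+2}M(x)=\Omega\,M(y)$ with $y=(f(x_{d+2}),x_1,\ldots,x_{d+1})$. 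I would then read off the first syzygy $\Omega M(y)$ from the minimal projective presentation supplied by the analogue of Proposition \ref{Prop_modcomb}(v): its projective cover is $M(f(y_{d+2}),y_2,\ldots,y_{d+2})$, and computing the kernel as a representation of the Gabriel quiver identifies $\Omega M(y)$ with $M(x)\circ\Phi=(M(x))_\Phi$. Here it is essential to track that $\Phi$ not only prepends $f(x_{d+1})$ to the index but also cyclically relabels the arrow types $a_i\mapsto a_{i+1}$; this is precisely what aligns the orientation of the syzygy with that of the $\Phi$-twist. Naturality of the resulting isomorphisms with respect to the morphisms $f^i_{yx}$ of $\mathcal{M}$ then follows because $\Phi$ respects the relation $\preccurlyeq$ and the composition law.

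To pass from $\mathcal{M}$ to all of $\underline{\modn}\Lambda$, I would use that every $M\in\modn\Lambda$ admits a resolution $0\to M_{d+1}\to\cdots\to M_1\to M\to 0$ with $M_i\in\mathcal{M}$ (the analogue of Proposition \ref{CTApprox}); since $(-)_\Phi$ is exact and $\Omega^{d+2}$ is a triangle functor, the isomorphism on the $M_i$ propagates to $M$ along the triangles of this resolution, and invertibility follows by the five-lemma in $\underline{\modn}\Lambda$. Equivalently — and this is the cleanest way to secure naturality in one stroke — one identifies $\Omega^{d+2}$ with tensoring by the invertible bimodule $\Omega_{\Lambda^e}^{d+2}(\Lambda)$; as $\Lambda$ is basic and self-injective this bimodule is a twist ${}_1\Lambda_\psi$ of $\Lambda$ by an algebra automorphism $\psi$, so $\Omega^{d+2}\cong(-)_\psi$ holds automatically, and the computation on $\mathcal{M}$ (or already on the simple modules, on which $\Omega^{d+2}$ turns out to act as a permutation) pins down $\psi=\Phi$ up to inner automorphism.

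The genuine obstacle is the middle step: computing the first syzygy $\Omega M(y)$ of a cluster tilting module, which leaves $\mathcal{M}$ and must be handled as a bare quiver representation, and recognizing it as the $\Phi$-twist $M(x)\circ\Phi$. The delicate bookkeeping is the interaction between the index shift $x_{d+1}\mapsto f(x_{d+1})$ and the arrow relabelling $a_i\mapsto a_{i+1}$ built into $\Phi$, together with the upgrade from an objectwise isomorphism to a natural one.
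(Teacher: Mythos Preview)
Your approach is genuinely different from the paper's, and the difference is instructive. The paper does not work inside $\modn\Lambda$ at all. Instead it invokes the identification $\Lambda\cong\underline{\End}_{\Gamma}(M)$ from Proposition~\ref{TriAICorrespondence}, where $\Gamma=A_{n,\ell}^{(d)}$ is one dimension \emph{lower} and $M$ is its $d\mathbb{Z}$-cluster tilting module. Under this identification $\proj\Lambda\simeq\underline{\add M}$, and the paper checks directly (using Proposition~\ref{prop_f_detects_syzygyorbit} at level $d$) that $\Omega_{\Gamma}^{d}$ on $\underline{\add M}$ matches $(-)_{\Phi}$ on $\proj\Lambda$. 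For arbitrary $N\in\underline{\modn}\Lambda$ one lifts a projective presentation $P_1\to P_0\to N$ to $\beta\colon M_1\to M_0$ in $\underline{\add M}$, completes $\beta$ to a $(d+2)$-angle, and applies $H=\underline{\Hom}_{\Gamma}(M,-)$; this yields a projective resolution of $N$ of length $d+2$, so $\Omega_{\Lambda}^{d+2}N$ is the cokernel of $H\Omega_{\Gamma}^{d}(\beta)$, which by the established natural isomorphism equals the cokernel of $(H\beta)_{\Phi}$, i.e.\ $N_{\Phi}$. Naturality is then a diagram chase on presentations. The point is that by dropping to level $d$, the entire $(d+2)$-fold syzygy is packaged into a single $(d+2)$-angle, and the ``stray'' first syzygy you flag as the genuine obstacle never appears.

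Your direct route---compute $\Omega_\Lambda^{d+1}$ on the $(d+1)\mathbb{Z}$-cluster tilting subcategory of $\Lambda$, then one more $\Omega_\Lambda$---is plausible, but the hard step you correctly isolate (identifying $\Omega_\Lambda M(y)$ with $(M(x))_\Phi$ as bare representations, and checking naturality) is only asserted, not executed. More seriously, your proposed shortcut via bimodules is circular as stated: the claim that $\Omega_{\Lambda^e}^{d+2}(\Lambda)$ is invertible, hence of the form ${}_1\Lambda_\psi$, is exactly the (bimodule) twisted $(d+2)$-periodicity of $\Lambda$. Being basic and self-injective does not force this by itself; it is the content of what you are trying to prove, so it cannot be used to obtain naturality for free.
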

\begin{proof}
    By Proposition \ref{TriAICorrespondence}, $\Lambda \cong \underline{\End}_{\Gamma}(M)$ where $\Gamma = A_{n, \ell}^{(d)}$ and $M$ is the distinguished $d\mathbb{Z}$-cluster tilting module of $\Gamma$.
    Let $H = \underline{\Hom}_{\Gamma}(M, -)$.

    We identify the Gabriel quiver $Q_{\Lambda}$ of $\Lambda$ with the Auslander-Reiten quiver of $\underline{\add M}$.
    By Proposition \ref{prop_f_detects_syzygyorbit},
    it follows that $\Omega_{\Gamma}^d(M(x)) = M(\Phi(x))$ with $M(x)\in \underline{\add M}$.
    
    Let $f_{yx}^i: M(x)\rightarrow M(y)$ be a nonzero morphism in $\underline{\add M}$.
    We claim that $H\Omega_{\Gamma}^d(f_{yx}^i) = (H(f_{yx}^i))_{\Phi}= f_{\Phi(y)\Phi(x)}^i$.
    Without loss of generality, 
    we may assume $i = 0$. 
    It follows that $f(y_{d+1}) < x_1$. 
    If not, then $x\preccurlyeq x' = (x_1, \ldots, x_d, x_1 + \ell + d - 1) \preccurlyeq y$.
    Hence $f_{yx}^0 = f_{yx'}^0f_{x'x}^0$.
    Note that $M(x')$ is projective.
    This contradicts with $f_{yx}^0$ being nonzero in $\underline{\add M}$. 
    
    Applying Proposition \ref{prop_f_detects_syzygyorbit} to $M(x), M(y)$ and lifting $f_{yx}^0$ we obtain
    \begin{equation*}
        \xymatrix@C=1.5em@R=1.5em{
        M(\Phi(x)) \ar[r]\ar[d]^{f_{\Phi(y)\Phi(x)}^0} & P^d\ar[r]\ar[d]^{g_d} & \cdots\ar[r] & P^1\ar[r]\ar[d]^{g_1} & M(x)\ar[r]\ar[d]^{f_{yx}^0} & 0\\
        M(\Phi(y)) \ar[r] & Q^d\ar[r] & \cdots\ar[r] & Q^1\ar[r] & M(y)\ar[r] & 0.
        }
    \end{equation*} 
    Observe that $g_i \neq 0$ for $1\leq i\leq d$ since $f(y_{d+1}) < x_1$ as shown above.
    As claimed $H\Omega_{\Gamma}^d(f_{yx}^i) \cong (H(f_{yx}^i))_{\Phi}$. Hence
    we have the following commutative diagram, for a more general statement, cf \cite[Proposition 2.2.7]{JKM22}.
    \begin{equation*}
        \xymatrix@C=5em@R=3em{
        \underline{\add M} \ar[r]^{H}\ar[d]^{\Omega_{\Gamma}^d} & \proj \Lambda\ar[d]^{(-)_{\Phi}}\\
        \underline{\add M}\ar[r]^{H} & \proj \Lambda
        }.
    \end{equation*}
    Denote by $\varepsilon:  H\Omega_{\Gamma}^d \xrightarrow[]{\sim} (-)_{\Phi}H$ the natural isomorphism.

    Now we show that $\Omega_{\Lambda}^{d+2} \cong (-)_{\Phi}$ on $\underline{\modn} \Lambda$. 
    
    Let $N$ be an indecomposable object in $\underline{\modn} \Lambda$ and take a minimal projective presentation of $N$ in $\modn \Lambda$
    \begin{equation*}
        P_1\xrightarrow[]{g} P_0\rightarrow N\rightarrow 0.
    \end{equation*}
     Then there exist $M_0, M_1\in \add M$ and $\beta: M_1\rightarrow M_0$ such that $P_i \cong HM_i$ for $i = 0, 1$ and $g = H\beta$.
    
    Since $\underline{\add M}$ is a $d\mathbb{Z}$-cluster tilting subcategory of $\underline{\modn} \Gamma$, it has a $(d+2)$-angulated structure. 
    Thus we embed $\beta$ into a $(d+2)$-angle 
    \begin{equation*}
        (\ast\ast) ~~\xymatrix@C=1em@R=1em{\Omega_{\Gamma}^dM_0\ar[r] & M_{d+1}\ar[r] & \cdots\ar[r] & M_2\ar[r] & M_1\ar[r]^{\beta} & M_0}.
    \end{equation*}
    Applying $H$ to $(\ast\ast)$, we have the following exact sequence
    \begin{equation*}
        \xymatrix@C=1em@R=1em{
        H(\Omega_{\Gamma}^{d}M_1)\ar[r]^{h} &  H(\Omega_{\Gamma}^{d}M_0)\ar[r] & 
        H(M_{d+1})\ar[r] &
        \cdots\ar[r] 
        & H(M_1)\ar[r]^{g} &
        H(M_0)\ar[r] &
        N\ar[r] & 0
        }.
    \end{equation*}
    Consider the following commutative diagram with exact rows.
    \begin{equation*}
        \xymatrix@C=1.8em@R=1.8em{
        H(\Omega_{\Gamma}^{d}M_1)\ar[r]^{h}\ar[d]^{\varepsilon_{M_1}} &  H(\Omega_{\Gamma}^{d}M_0)\ar[r]\ar[d]^{\varepsilon_{M_0}} & \Omega_{\Lambda}^{d+2}(N)\ar[r]\ar[d] & 0\\
        (H(M_1))_{\Phi}\ar[r]^{g_{\Phi}} &
        (H(M_0))_{\Phi}\ar[r] &
        N_{\Phi}\ar[r] & 0
        }
    \end{equation*}
    Since $\varepsilon_{M_1}$ and $\varepsilon_{M_0}$ are isomorphisms, 
    we have that $\Omega_{\Lambda}^{d+2}(N) \cong N_{\Phi}$.

    Let $\varphi : N\rightarrow N'$ be a morphism in $\underline{\modn}\Lambda$.
    We have the following diagram. 
    \begin{equation*}
        \xymatrix@!0@C=4em@R=3em{
        & H(\Omega_{\Gamma}^d M_1) \ar[rr]\ar'[d][dd]\ar[dl] && H(\Omega_{\Gamma}^d M_0) \ar[rr]\ar'[d][dd]\ar[dl] && \Omega_{\Lambda}^{d+2}(N) \ar'[d][dd]\ar[rr]\ar[dl] && 0 \\
        H(\Omega_{\Gamma}^d M'_1)\ar[rr]\ar[dd] && H(\Omega_{\Gamma}^d M'_0)\ar[dd]\ar[rr] && \Omega_{\Lambda}^{d+2}(N')\ar[dd]\ar[rr] && 0\\
        & (H(M_1))_{\Phi}\ar'[r][rr]\ar[dl] && (H(M_0))_{\Phi}\ar'[r][rr]\ar[dl] && N_{\Phi}\ar[rr]\ar[dl] && 0\\
        (H(M'_1))_{\Phi}\ar[rr] \ar[rr] && (H(M'_0))_{\Phi}\ar[rr] && N'_{\Phi}\ar[rr] && 0
        }
    \end{equation*}
    The rightmost vertical square commutes since all the other squares commute. This show that $\Omega_{\Lambda}^{d+2} \cong (-)_{\Phi}$ as functors on $\underline{\modn} \Lambda$.
    
\end{proof}

\begin{Eg}
    Let $n = 5$, $d = 2$ and $\underline{\ell} = (3,4,4,4,4)$. By Example \ref{egWideSubCat}, 
    the Gabriel quiver of $\Lambda = A_{4, 2}^{(3)}$ is given as follows.
    \begin{equation*}
    {\small
        \begin{xy}
            0;<30pt,0cm>:<10pt,20pt>::
            (0,0) *+{123} = "123",
            (0,1) *+{124} = "124",
            (1,1) *+{134} = "134",
            (2,0) *+{234} = "234",
            (2,1) *+{235} = "235",
            (3,1) *+{245} = "245",
            (4,0) *+{345} = "345",
            (4,1) *+{346} = "346",
            (5,1) *+{356} = "356",
            (6,0) *+{456} = "456",
            (6,1) *+{457} = "457",
            (7,1) *+{467} = "467",
            (8,0) *+{123} = "567",
            (8,1) *+{124} = "568",
            "123", {\ar"124"},
            "124", {\ar"134"},
            "234", {\ar"235"},
            "235", {\ar"245"},
            "345", {\ar"346"},
            "346", {\ar"356"},
            "456", {\ar"457"},
            "457", {\ar"467"},
            "567", {\ar"568"},
            "134", {\ar"234"},
            "245", {\ar"345"},
            "356", {\ar"456"},
            "467", {\ar"567"},
        \end{xy}}
    \end{equation*}
    $\Lambda$ is twisted $4$-periodic and the twist is induced by the automorphism $\Phi$ which sends $(x_1, x_2, x_3)$ to $(x_3-4, x_1, x_2)$.
\end{Eg}

\section{Examples}
In this section, we give more examples.

\begin{Eg}(Compare to \cite[Example 5.4]{Sh15})
    Let $n = 4$, $d = 1$ and $\underline{\ell} = (5,6,7,6)$.
    Let $A = A_{\underline{\ell}}^{(1)}$ be the usual Nakayama algebra and $\modn A$ is the $1\mathbb{Z}$-cluster tilting subcategory.
    The resolution quiver is given as follows.
    \begin{equation*}
        \xymatrix@R=1em@C=2em{
         1\ar[dr] \\
         & 4\ar@/^/[r] & 2\ar@/^/[l]\\
         3\ar[ur]
        }
    \end{equation*}
    Then $J = \{2, 4\}$ and $I = J + 4\mathbb{Z}$.
    Thus $\iota(1) = 2$ and $\iota(2) = 4$.
    Therefore $\ell' = |[f(\iota(1)), \iota(1)]\cap I| - 1 = |[-4, 2] \cap I| - 1 = 3$ and $B = A_{2, 3}^{(1)}$. 
    The Auslander-Reiten quiver of the wide subcateory $\mathcal{W}$ of $\modn A$ is as follows.
    \begin{equation*}
        {\small
        \begin{xy}
            0;<20pt,0cm>:<20pt,20pt>::
            (0,0) *+{24} = "00",
            (0,1) *+{26} = "01",
            (0,2) *+{28} = "02",
            (2,0) *+{46} = "10",
            (2,1) *+{48} = "11",
            (2,2) *+{4X} = "12",
            (4,0) *+{24} = "20",
            (4,1) *+{26} = "21",
            (4,2) *+{28} = "22",
            "00", {\ar"01"},
            "01", {\ar"02"},
            "10", {\ar"11"},
            "11", {\ar"12"},
            "20", {\ar"21"},
            "21", {\ar"22"},
            "01", {\ar"10"},
            "02", {\ar"11"},
            "11", {\ar"20"},
            "12", {\ar"21"},
        \end{xy}
        }
    \end{equation*}
    Hence we have that $\Lambda = A_{2,2}^{(2)}$ which is twisted $3$-periodic.
\end{Eg}

\begin{Eg}
    Let $n = 5$, $d = 4$ and $\underline{\ell} = (5,5,6,6,5)$.
    Let $A = A_{\underline{\ell}}^{(4)}$ be the $4$-Nakayama algebra defined by $\underline{\ell}$ and $\mathcal{M}$ the distinguished $4\mathbb{Z}$-cluster tilting subcategory.
    Recall that $\overline{f}(i) \equiv i - \ell_i - 3 \mod 5$.
    Thus we have the resolution quiver
    \begin{equation*}
        \xymatrix@R=1em@C=1em{
            1\ar[r] & 3\ar[r] & 4\ar[r] & 5\ar[r] & 2\ar@/^1pc/[ll]}.
    \end{equation*}\medskip
    
    Then $J = \{2,4,5\}$ and $I = J + 5\mathbb{Z}$.
    We have the Auslander-Reiten quiver of the wide subcategory $\mathcal{W}$ of $\mathcal{M}$ as follows.
    \begin{equation*}
        {\tiny
        \begin{xy}
            0;<30pt,0cm>:<8pt,25pt>::
            (0,0) *+{24579} = "1",
            (0,1) *+{2457X} = "2",
            (1,1) *+{2459X} = "3",
            (2,1) *+{2479X} = "4",
            (3,1) *+{2579X} = "5",
            (4.5,0) *+{4579X} = "6",
            (4.5,1) *+{4579Y} = "7",
            (5.5,1) *+{457XY} = "8",
            (6.5,1) *+{459XY} = "9",
            (7.5,1) *+{479XY} = "10",
            (9,0) *+{579XY} = "11",
            (9,1) *+{579XZ} = "12",
            (10,1) *+{579YZ} = "13",
            (11,1) *+{57XYZ} = "14",
            (12,1) *+{59XYZ} = "15",
            (13.5,0) *+{24579} = "16",
            (13.5,1) *+{2457X} = "17",
            "1", {\ar"2"},
            "2", {\ar"3"},
            "3", {\ar"4"},
            "4", {\ar"5"},
            "5", {\ar"6"},
            "6", {\ar"7"},
            "7", {\ar"8"},
            "8", {\ar"9"},
            "9", {\ar"10"},
            "10", {\ar"11"},
            "11", {\ar"12"},
            "12", {\ar"13"},
            "13", {\ar"14"},
            "14", {\ar"15"},
            "15", {\ar"16"},
            "16", {\ar "17"},
        \end{xy}
        }
    \end{equation*}
    Thus $B = A_{3, 2}^{(4)}$ and $\Lambda = k\oplus k\oplus k$. 
\end{Eg}

\section*{Acknowledgments}

The author would like to thank her advisor Martin Herschend for many helpful comments and discussions. The author also wants to thank Jordan McMahon for pointing out that the main theorem has been proved in his paper in 2019.

\bibliographystyle{amsplain}

\providecommand{\bysame}{\leavevmode\hbox to3em{\hrulefill}\thinspace}
\renewcommand{\MRhref}[2]{%
  \href{http://www.ams.org/mathscinet-getitem?mr=#1}{#2}
}
\renewcommand\MR[1]{\relax\ifhmode\unskip\space\fi MR~\MRhref{#1}{#1}}


\end{document}